\newcommand{\N}{\mathbb{N}}
\newcommand{\Z}{\mathbb{Z}}
\newcommand{\R}{\mathbb{R}}
\newcommand{\C}{\mathbb{C}}
\newcommand{\F}{\mathbb{F}}
\newcommand{\G}{\mathbb{G}}
\newcommand{\hG}{\widehat{\mathbb{G}}}
\newcommand{\T}{\mathbb{T}}
\newcommand{\mc}{\mathcal}
\newcommand{\id}{\text{id}}
\newcommand{\W}{\mathbb W}
\newcommand{\la}{\langle}
\newcommand{\ra}{\rangle}
\newcommand{\Tr}{\text{Tr}}
\newcommand{\Mor}{\text{Mor}}
\newcommand{\Irr}{\text{Irr}}
\newtheorem{thm}{Theorem}[section]
\newtheorem{cor}[thm]{Corollary}
\newtheorem{lem}[thm]{Lemma}
\newtheorem{prop}[thm]{Proposition}
\theoremstyle{definition}
\newtheorem{defn}[thm]{Definition}
\theoremstyle{remark}
\newtheorem{rem}[thm]{Remark}
\newtheorem{notat}[thm]{Notation}
\newtheorem{quest}[thm]{Question}
\newtheorem{exs}[thm]{Examples}
\newtheorem{ex}[thm]{Example}
\numberwithin{equation}{section}
\begin{document}

\title[Approximation properties for locally compact quantum groups]
{Approximation properties for locally compact quantum groups} 

\date{\today}

\author{Michael Brannan}
\address{Michael Brannan: Department of Mathematics, Texas A\&M University, Mailstop 3368, College Station, TX 77843-3368 USA}
\email{mbrannan@math.tamu.edu}
\maketitle

\begin{abstract}
This is a survey of some aspects of the subject of approximation properties for locally compact quantum groups, based on lectures given at the {\it Topological Quantum Groups} Graduate School, 28 June - 11 July, 2015 in Bed\l{}ewo, Poland.  We begin with a study of the dual notions of amenability and co-amenability, and then consider weakenings of these properties in the form of the Haagerup property and weak amenability.  For discrete quantum groups, the interaction between these properties and various operator algebra approximation properties are investigated.  We also study the connection between central approximation properties for discrete quantum groups and monoidal equivalence for their compact duals.  We finish by discussing the central weak amenability and central Haagerup property for free  quantum groups.     
\end{abstract}

\section{Introduction} \label{intro}

The purpose of this survey is to give readers interested in  operator algebraic quantum group theory a light introduction to the study of approximation properties for locally compact quantum groups.  This survey is not a complete one on this subject by any means.  Instead, we choose in these notes to focus our attention on four important concepts: amenability, co-amenability, the Haagerup property, and weak amenability. 

The study of approximation properties for locally compact groups has a long history, and has many interactions with various branches of mathematics, including probability theory, ergodic theory and dynamics, and (of course) operator algebra theory.  From the perspective of operator algebras, the study of group approximation properties gained central importance with the seminal work of Haagerup \cite{Ha78}, who showed that the reduced free group C$^\ast$-algebras  $C^*_r(\F_k)$ have the metric approximation property, despite being non-nuclear when $k \ge 2$.  Haagerup obtained this result by first showing that the free groups possess  a certain approximation property, now called  {\it the Haagerup property}, which can be thought of as an amenability-like condition.  Building on this work, Cowling and Haagerup \cite{CoHa89} introduced the concept of {\it weak amenability} for locally compact groups and showed that this weak form of amenability is also satisfied by the free groups and real rank one semisimple Lie groups.  The connection to operator algebras here is that a discrete group is weakly amenable if and only if its group von Neumann algebra $\mc L(G)$ has the weak$^*$ completely bounded approximation property \cite{Ha86}.  In recent years, the Haagerup property and weak amenability for groups has also had spectacular application's in Popa's deformation/rigidity program for von Neumman algebras.  See \cite{OzPo10,ChSi13} and the references therein. 

The theory of locally compact quantum groups generalizes the theory of locally compact groups, and contains many interesting examples arising from various free probabilistic, combinatorial, and non-commutative geometric constructions (see  \cite{BaSp,RaWe16,  wo1,wo2,wo3}).  In particular, locally compact quantum groups give rise to interesting and highly non-trivial quantum analogues of group C$^\ast$-/von Neumann algebras, and it is natural to develop a parallel theory of approximation properties for quantum groups and study its interaction with the structure of these corresponding operator algebras.  The main goal of this survey is to develop some of this theory, focusing on both the parallels and the distinctions between the classical and quantum worlds.  At this stage in the game, the theory of approximation properties for general locally compact quantum groups is still in its infancy, with may unresolved open problems and issues.  It is our hope that this survey will inspire readers to pursue questions on this interesting research topic.    

\subsection{Organization of the paper}  The rest of this paper is organized as follows:  Section \ref{prelim} contains some basic facts on locally compact quantum groups that we will use throughout the paper.  In section \ref{amen}, we begin our study of approximation properties with a discussion of amenability and co-amenability for locally compact quantum groups.  Here, we study the dualities between amenability and co-amenability and their connections with  nuclearity/injectivity for quantum group operator algebras.  We also give a proof of the quantum Kesten amenability criterion for discrete quantum groups, and use this to  characterize the amenability of free orthogonal quantum groups and Woronowicz' $SU_q(2)$ quantum group. In section \ref{fourieralg}, we start to  pave the way for studying more general approximation properties for quantum groups by introducing the Fourier algebra and describing the completely bounded Fourier multipliers on this algebra.  In Section \ref{hap/wa}, we use these structures to define the Haagerup property and weak amenability for general locally compact quantum groups, and discuss some basic features of these properties.  In Section \ref{discr}, we restrict our attention to discrete quantum groups for the remainder of the paper.  Here we study the interaction between (weak) amenability and the Haagerup property with various operator algebra approximation properties, like nuclearity, injectivity, the Haagerup approximation property, and the completely bounded approximation property.  In the unimodular case, we prove several equivalences between quantum group approximation properties and operator algebra approximation properties.  In Section \ref{CAP} we introduce {\it central} analogues of (weak) amenability and the Haagerup property, and show that these properties are equivalent to their non-central versions in the unimodular case.  In Section \ref{CAP-monoidal}, we show how central approximation properties for discrete quantum groups are preserved by monoidal equivalences between their compact duals.  Finally, in Section \ref{app} we use the theory developed in the previous sections to study approximation properties for free quantum groups, focusing mainly on the case of free orthogonal quantum groups.

\subsection{Some further reading}

As stated before, we only focus in this paper on a few aspects of the theory of quantum group approximation properties.  In particular, we don't say anything about Kazhdan's property (T) for locally compact quantum groups.  For the very recent and remarkable developments on this topic, we refer the reader to \cite{ChNg15, Ar15, FiMuPa15}.  We also don't consider any of the applications of these approximation properties to structural properties of the operator algebras associated to these quantum groups (e.g. strong solidity, prime factorization results, ...).  See \cite{Is15a,Is15b, dCFY}.  Another aspect of the theory that we do not touch here are {\it external approximation properties} for quantum groups.  For example, there have been very recent and striking developments on questions related to residual finiteness, (inner)linearity,  hyperlinearity, and the Kirchberg factorization property for discrete quantum groups.  See \cite{BaBi10, BaFrSk12, BrCoVe15, Ch, BaWa16}.   

\subsection*{Acknowledgements}
First and foremost, I would like to thank Uwe Franz, Adam Skalski, and Piotr Soltan for organizing such a wonderful graduate school, for inviting me to speak there, and for encouraging me to write this survey.  I would also like to thank \'Etienne Blanchard and Stefaan Vaes for kindly sharing with me their unpublished notes \cite{BlVa02}.

\section{Preliminaries} \label{prelim}

In the following, we write $\otimes$ for the minimal tensor product of C$^\ast$-algebras or  tensor product of Hilbert spaces, $\overline{\otimes}$ for the spatial tensor product of von Neumann algebras and $\odot$ for the algebraic tensor product.  All inner products are taken to be conjugate-linear in the second variable.  Given vectors $\xi,\eta$ in a Hilbert space $H$, we denote by $\omega_{\xi,\eta} \in \mc B(H)_*$ the vector state  $x \mapsto \la  x \xi |\eta\ra$.  If $\xi = \eta$, we simply write $\omega_\xi = \omega_{\xi,\xi}$.

\subsection{Locally compact quantum groups}

Let us first recall from  \cite{KuVa00} and \cite{KuVa03} that 
a  (von Neumann algebraic) \textit{locally compact quantum group (lcqg)} is 
a   quadruple $\G = (M,\Delta, \varphi, \psi)$, where 
$M$ is a von Neumann algebra,  $\Delta:M \to M\overline{\otimes}M$ is a co-associative coproduct, i.e. a 
unital normal $\ast$-homomorphism such that 
\[
(\iota \otimes \Delta) \circ \Delta = (\Delta \otimes \iota) \circ \Delta,
\]
and $\varphi$ and $\psi$ are normal faithful semifinite weights on $M$ such that 
\[ 
 (\iota  \otimes \varphi)\Delta(x)  = \varphi(x) 1 \quad \mbox{and}  \quad
 (\psi \otimes \iota)\Delta(x)  = \psi(x) {1}
\qquad (x\in M^+). 
\]
We call $\varphi$ and $\psi$ the 
\textit{left Haar weight} and the \textit{right Haar weight} of $\G$, respectively, and 
we write  $L^\infty(\G) $  for the quantum group von Neumann algebra $M$.

Associated with  each  locally compact quantum group $\G$,  there is a \emph{reduced} quantum group 
 C*-algebra
 $C_0(\G)\subseteq L^\infty(\G)$  with coproduct  
  \[
 \Delta: x\in C_0(\G) \mapsto \Delta(x) \in M(C_0(\G)\otimes C_0(\G)) \subseteq L^\infty(\G) \overline {\otimes} L^\infty(\G).
 \]
Here we let  $M(B)$ denote the {\it multiplier algebra } of a C*-algebra $B$.
Therefore, $(C_0(\G), \Delta)$ together with $\varphi$ and $\psi$ restricted to $C_0(\G)$  is a   
C*-algebraic  locally compact quantum group in the sense of \cite{KuVa00}.

Using the left Haar weight $\varphi$, we can  apply the GNS construction to obtain an inner product  
\[
\langle {\Lambda_\varphi(x) | \Lambda_\varphi(y)} \rangle = \varphi(y^* x)
\]
on  $ \mathfrak N_{\varphi}=  \{x \in L^\infty(\G) : \varphi(x^*x) < \infty \}$, 
and thus obtain  a Hilbert space $L^2(\G)= L^2(\G, \varphi)$.  Here, $\Lambda_\varphi: \mathfrak N_{\varphi} \to L^2(\G)$ is the canonical injection.
The quantum group von Neumann algebra  $L^\infty(\G)$ is standardly represented on $L^2(\G)$ via  the unital normal
$\ast$-homomorphism $\pi : L^\infty(\G) \to \mc B(L^2(\G))$ satisfying  $\pi(x)\Lambda_\varphi(y) = \Lambda_\varphi(xy)$.
There exists a  (left) \textit{fundamental unitary operator} $W$ on $L^2(\G)\otimes L^2(\G)$, 
which satisfies the pentagonal relation
\[
W_{12} W_{13} W_{23} = W_{23} W_{12}.
\]
Here, and later,
we use the standard leg number notation: $W_{12} = W\otimes I, W_{13}
= \Sigma_{23} W_{12} \Sigma_{23}$ and $W_{23} = 1 \otimes W$, where
 $\Sigma: L^2(\G) \otimes L^2(\G)  \to L^2(\G) \otimes L^2(\G) $ is the flip map.
In this case, the coproduct  $\Delta$ on $L^\infty(\G)$ can be expressed as $\Delta(x) = W^*(1\otimes x)W$.

Let  $L^1(\G) = L^\infty(\G)_*$ be the predual of $L^\infty(\G)$.
Then the   pre-adjoint  of  $\Delta$  induces on $L_1(\G)$ a completely contractive Banach algebra product
\[
\star = \Delta_*: \omega \otimes \omega' \in L^1(\G) \widehat \otimes L^1(\G) \mapsto \omega \star \omega' = (\omega \otimes \omega' ) \Delta \in L^1(\G),
\]
where we let  $\widehat \otimes$ denote the operator space projective tensor product.  We will also use the symbol $\star$ to denote the induced left/right module actions of $L^1(\G)$ on $L^\infty(\G)$:
\[\omega \star x = (\iota \otimes \omega)\Delta(x) \quad \&\quad x \star \omega = (\omega \otimes \iota) \Delta(x) \qquad (x \in L^\infty(\G), \ \omega \in L^1(\G)).\]

Using the Haar weights, one can construct an antipode $S$ on $L^\infty(\G)$ satisfying $(S \otimes \iota)W = W^*$. Since, in general, $S$ is 
unbounded on $L^\infty(\G)$, we  can not use $S$ to define an involution on $L^1(\G)$.
However, we can consider a dense subalgebra $L^1_\sharp (\G)$ of $L^1(\G)$, which is defined to be 
the collection of all  $\omega\in L^1(\G)$ such that there exists $\omega^\sharp \in L^1(\G)$ with
$\la \omega^\sharp, x\ra = \overline{ \la \omega, S(x)^* \ra}$ for each $x\in \mc{D}(S)$.
It is known from  \cite{Ku01} and \cite[Section~2]{KuVa03} that 
 $L^1_\sharp(\G)$ is an involutive Banach algebra with  involution  $\omega\mapsto\omega^\sharp$
 and norm $\|\omega\|_\sharp = \mbox{max}\{\|\omega\|, \|\omega^\sharp\|\}$.  

\subsection{Representations of locally compact quantum groups} 

A \emph{representation} $(\pi, H)$ of a locally compact quantum group $\G$ is  a non-degenerate completely contractive homomorphism  $\pi: L^1(\G) \to \mc B(H)$   whose restriction to $L^1_\sharp(\G)$  is a  $\ast$-homomorphism.
It is shown in \cite[Corollary 2.13]{Ku01}  that each representation $(\pi, H)$ of $\G$ corresponds uniquely to a unitary operator $U_\pi$ 
 in $M(C_0(\G) \otimes \mc K(H))\subseteq L^\infty (\G) \overline{\otimes} \mc B(H)$ such that 
 \[
 (\Delta \otimes \iota)  U_\pi =  U_{\pi, 13}  U_{\pi, 23}.
 \]
The correspondence  is given by 
\[
\pi(\omega)  = (\omega \otimes \iota) U_\pi \in  \mc B(H) \qquad (\omega \in L^1(\G)).
\]  
We call $U_\pi $ the \emph{unitary representation} of $\G$ associated with $\pi$. 

Given two representations $(\pi,H_\pi) $ and $ (\sigma ,H_\sigma)$, we can obtain new representations by forming their tensor product and direct sum.  The unitary operator
\[ 
U_{\pi \otimes \sigma}  :=  U_{\pi,12} U_{\sigma,13} \in M(C_0(\G) \otimes\mc K(H_\pi \otimes H_\sigma))
\]  
determines a representation 
\[
\pi \otimes  \sigma: \omega \in  L^1(\G) \mapsto \mc (\omega \otimes \iota)U_{\pi \otimes \sigma}  \in \mc B(H_\pi \otimes H_\sigma).
\]
We call the representation $({\pi \otimes \sigma}, H_\pi \otimes H_\sigma)$  the  \textit{tensor product}  of $\pi$ and $\sigma$.
The \emph{direct sum}  $(\pi\oplus \sigma, H_\pi \oplus H_\sigma)$ of $\pi$ and $\sigma$ is given by the representation
\[
\pi\oplus \sigma : \omega\in L^1(\G) \to \pi(\omega) \oplus \sigma(\omega)\in \mc B(H_\pi \oplus H_\sigma).
\]

For representations $(\pi,H)$ of $\G$,  one can consider all of the usual notions of (ir)reducibility, intertwiner spaces, unitary equivalence, and so on, that can be considered for representations of general Banach algebras.  In particular, given $\pi,\sigma$ as above, we write 
\begin{align*}\text{Mor}(\pi, \sigma) &= \{T \in \mc B(H_\pi,H_\sigma): T\pi(\omega) = \sigma(\omega)T \qquad (\omega \in L^1(\G)) \}\\
&=  \{T \in \mc B(H_\pi,H_\sigma): (1 \otimes T)U_\pi = U_\sigma(1 \otimes T)\},
\end{align*} 
for the space of intertwiners between $\pi$ and $\sigma$.
 
 The \emph{left regular representation} of $\G$ is defined by 
 \[
 \lambda : \omega \in L^1(\G) \mapsto (\omega \otimes \iota )W\in \mc B(L^2(\G)).
 \]
This is an injective  completely contractive  homomorphism from $L^1(\G)$ into $\mc B(L^2(\G))$ such that 
$\lambda(\omega^\sharp) = \lambda(\omega)^*$ for all $\omega\in L^1_\sharp(\G)$.
Therefore,  $\lambda$ is a $*$-homomorphism when restricted to $L^1_\sharp(\G)$.
The  norm closure of $\lambda(L^1(\G))$ yields  C$^\ast$-algebra $C_0(\hG)$  and the $\sigma$-weak closure of
$\lambda(L^1(\G))$ yields a von Neumann algebra $L^\infty(\hG)$.  
 There is a coproduct on $L^\infty(\hG)$ given by 
 \[
\hat{\Delta}:  \hat x \in L^\infty(\hG) \to \hat\Delta(\hat x) = \hat W^*(1\otimes \hat x) \hat W \in 
L^\infty(\hG)\overline{\otimes} L^\infty(\hG),
\]
 where   $\hat W = \Sigma W^*\Sigma$.
We can find 
Haar weights $\hat \varphi$ and $  \hat \psi$ to turn $\hG = (L^\infty(\hG) , \hat \Delta, \hat \varphi, \hat \psi)$
 into a locally compact quantum group -- the {\it Pontryagin dual} of $\G$.  
 Repeating  this argument for the dual quantum group $\hG$ , we get  the left regular representation 
 \[
 \hat \lambda : \hat\omega \in L^1(\hG) \mapsto (\hat \omega \otimes \iota )\hat W=  (\iota \otimes \hat \omega )W^*
 \in \mc B(L^2(\G)).
 \]
 It turns out that   $C_0(\G)$ and   $L^\infty(\G)$ are just 
  the norm and $\sigma$-weak closure  of  $\hat \lambda(L^1(\hG))$ in $\mc B(L^2(\G))$, respectively.
 This gives us   the  quantum group version of Pontryagin duality $\widehat{\hG}= \G$.


There is  a \emph{universal representation} 
 $\pi_u: L^1(\G) \to \mc B(H_u)$ and
 we   obtain   the  {\it   universal quantum group C$^\ast$-algebra}   $C^u_0(\hG) = \overline{\pi_u(L^1(\G))}^{\|\cdot\|}$.  
 By the universal property,   every representation $\pi: L^1(\G) \to \mc B(H)$  uniquely corresponds to a surjective 
  $\ast$-homomorphism $\hat \pi$ from $C^u_0(\hG)$
  onto the C*-algebra $C_\pi(\hG) = \overline{\pi(L^1(\G))}^{\|\cdot\|}$ such that $\pi = \hat \pi \circ \pi_u .$
  In particular,   the left regular representation  $(\lambda, L^2(\G))$ uniquely determines a surjective $\ast$-homomorphism $\hat \pi_\lambda$ from $C^u_0(\hG)$ onto  $ C_0(\hG)$.  Considering the duality, we can obtain the universal quantum group C$^\ast$-algebra $C^u_0(\G) $ and we denote by $ \pi_{\hat \lambda} : C^u_0(\G) \to C_0(\G)$ the canonical surjective $\ast$-homomorphism.  As shown in \cite{Ku01}, $C^u_0(\G)$ admits a coproduct $\Delta_u:C^u_0(\G) \to M(C^u_0(\G) \otimes C^u_0(\G))$ which turns $(C^u_0(\G), \Delta_u)$ into a C$^\ast$-algebraic locally compact quantum group with left and right Haar weights given by $\varphi \circ \pi_{\hat \lambda}$ and $\psi \circ \pi_{\hat \lambda}$, respectively.    

It is also shown in \cite{Ku01} that the fundamental unitary $W$ of $\G$ admits a ``semi-universal'' version $\text{\reflectbox{$\W$}}\in M(C_0(\G) \otimes C^u_0(\hG))$ which has the property that for each representation $\pi: L^1(\G) \to \mc B(H)$, the corresponding unitary operator $U_\pi$ can be expressed as   
\[(\iota \otimes \hat \pi)\text{\reflectbox{$\W$}} = U_\pi.
\] 
Moreover, $\text{\reflectbox{$\W$}}$ satisfies  the following relations
\begin{align} \label{bichar_relation}
(\Delta \otimes \iota) \text{\reflectbox{$\W$}} = \text{\reflectbox{$\W$}}_{13}\text{\reflectbox{$\W$}}_{23} ~\mbox{and} ~ 
(\iota \otimes \hat \Delta_u) \text{\reflectbox{$\W$}} = \text{\reflectbox{$\W$}}_{13} \text{\reflectbox{$\W$}}_{12}.
\end{align}

\subsection{Compact and discrete quantum groups}  A locally compact quantum group is called 
\textit{compact} if $C_0(\G)$ is unital, or equivalently $\varphi = \psi$ is a $\Delta-$bi-invariant state (up to a scale factor).  In this case, we will write $C(\G)$ for $C_0(\G)$.
 We say $\G$ is \textit{discrete} if $\hG$ is a compact quantum group, or equivalently, if $L^1(\G)$ is unital.  For a self-contained treatment of compact/discrete quantum groups and their duality, we refer to \cite{PoWo90, EfRu94, Ti08}.

Let $\G$ be a discrete quantum group.  Denote by $\Irr(\hG)$ the collection of equivalence classes of finite dimensional irreducible representations of $\hG$.  For each $\pi \in \Irr(\hat  \G)$, select a representative unitary representation $( U^\pi,H_\pi)$.  Then $U^\pi$ can be identified with a unitary matrix 
\[
 U^\pi = [ u_{ij}^\pi] \in M_{n(\pi)}( C(\hG)),
\]
where $n(\pi) = \dim H_\pi$.  The linear subspace $\mc O(\hG) \subseteq C(\hG)$ spanned by $\{ u_{ij}^\pi : \pi \in \Irr(\hG), \ 1 \le i,j \le n(\pi)\}$ is a dense Hopf-$\ast$-subalgebra  of $C(\hG)$.  The  coproduct on $\mc O(\hG)$ is given by  restricting $\hat \Delta $ to $\mc O(\hG)$, and we have 
\[
\hat \Delta  (u^\pi_{ij} ) = \sum_{k=1}^{n(\pi)}u^\pi_{ik} \otimes \hat u^\pi_{kj}.
\]
We call $\mc O(\hG)$ the \textit{algebra of polynomial functions on $\hG$}.  

The Haar state $\hat \varphi = \hat \psi$ is always faithful when restricted
to $\mc O(\hG)$, and  $C^u_0(\hG)$ can be identified with the universal enveloping C$^\ast$-algebra of $\mc O(\hG)$.  This allows us to regard $\mc O(\hG)$ as a dense $\ast$-subalgebra of $C^u_0(\hG)$, and the semi-universal fundamental unitary of 
the discrete quantum group $\G$ is then given by 
\begin{align} \label{bichar_compact} \text{\reflectbox{$\W$}} = \bigoplus_{\pi \in \Irr(\hG)} ( U^\pi)^* =  \bigoplus_{\pi \in \Irr(\hG)} \sum_{1 \le i,j \le d_\alpha} e_{ij}^{\pi}  \otimes   u_{ji}^{\pi \ast}  \in M(C_0( \G) \otimes C^u_0(\hG). 
\end{align}  
In general,  for any discrete quantum group $\G$, we have
\[C_0(\G) =\bigoplus_{\pi \in \Irr(\hG)}^{c_0} \mc B(H_\pi)  \quad \mbox{and} \quad L^\infty(\G) = \prod_{\pi \in \Irr(\hG)} \mc B(H_\pi).
\]  
Denote by $C_c(\G)$ the (algebraic) direct sum $\bigoplus_{\pi \in \Irr(\hG)} \mc B(H_\pi)$.  Then 
$C_c(\G)$  forms a common core for the Haar weights $\varphi$ and $\psi$ on $\G$.   

For each $\pi \in \Irr(\hG)$, denote by $p_\pi \in L^\infty(\G)$ the minimal central projection whose support is $\mc B(H_\pi)$.  
Let $\Tr_{\pi}$
be the canonical trace  on  $\mc B(H_\pi)$ (with $\Tr_{\pi}(1) = n(\pi)$).
One can then find positive invertible matrices $Q_\pi \in \mc B(H_\pi)$ which satisfy 
\[
d(\pi) =\Tr_\pi(Q_\pi) = \Tr_\alpha(Q_\pi^{-1}) \quad \mbox{and} \quad 
\Delta(Q_\pi) = Q_\pi \otimes Q_\pi.
\]  The left and right  Haar weights on $\G$ are given by the formulas 
 \begin{align} \label{eqn:weights}
 \varphi(p_\pi x)= d(\pi) \Tr_\alpha(Q_\pi^{-1} p_\pi x)  \quad \mbox{and} 
  \qquad \psi(p_\pi x) = d(\pi) \Tr_\pi(Q_\pi p_\pi x). 
\end{align}  See \cite[Equations (2.12)-(2.13)]{PoWo90}.
The number $d(\pi)$ appearing above is called the {\it quantum dimension} of $\pi$.  Note that $d(\pi) \ge n(\pi)$, and we have equality for all $\pi$ if and only if $\varphi = \psi$.  In this case, we say that $\G$ is a {\it unimodular discrete quantum group}.

The matrices $Q_{\pi} \in \mc B(H_\pi)$ defined above can be used to describe many other structures associated to $\G$ and $\hG$.  For $\G$, the modular automorphism group $\sigma_t$ turns out to be equal to the scaling group $\tau_t$, and \[\sigma_t(x) = \tau_t(x) = Q_\pi^{-it}xQ_\pi^{it} \qquad (x \in \mc B(H_\pi), \ t \in \R). \]    
On the dual side, the {\it Schur orthogonality relations} for the Haar state $\hat \varphi$ are given by 
 \[\hat \varphi(u^\pi_{ij}(u^{\pi'}_{kl})^*) = \frac{\delta_{\pi\pi'}\delta_{ik}Q_{\pi, lj}}{d(\pi)} \quad \& \quad \hat \varphi((u^\pi_{ij})^*u^{\pi'}_{kl}) = \frac{\delta_{\pi\pi'}\delta_{jl}(Q_{\pi}^{-1})_{ki}}{d(\pi)}. \]
The {\it Woronowicz characters} $(f_z)_{z \in \C}$ of $\hG$ is the family of characters $f_z: \mc O(\hG) \to \C$ given by  
\[f_z(u_{ij}^\pi) = (Q_\pi^z)_{ij} \qquad (\pi \in \Irr(\hG), \ 1 \le  i,j \le n(\pi)).\]     Note that \[f_z\star f_w = f_{z+w} \quad \& \quad f_{z}(x^*) = \overline{f_{-\bar z}(x)} \qquad (z,w \in \C, \ x \in \mc O(\hG)).\]
So that $f_z$ is a $\ast$-character if $z \in i\R$.  The Woronowicz characters can also be used to describe the antipode $\hat S$, the (analytic continuation of the)  scaling group $\hat \tau_z$ and modular group $\hat \sigma_z$, and the unitary antipode $\hat R$:  
\[\hat \tau_z(x) = f_{-iz}\star x\star f_{iz}, \quad \hat S^2(x) = \hat\tau_{-i}(x), \quad \hat\sigma_z(x) = f_{iz}\star x\star f_{iz}, \quad \hat S \circ \hat \tau_{i/2}. \]

\subsection{A few basic examples}  We mention here a few examples of compact quantum groups that we will come to at several points in this paper.  These are the free unitary quantum groups, the free orthogonal quantum groups, and Woronowicz' $SU_q(2)$ quantum group.  We leave it to the reader to check that the following examples do indeed satisfy Woronowicz' axioms for a compact (matrix) quantum group \cite{wo2}.

\begin{notat}
Given a complex $\ast$-algebra $A$ and a matrix $X = [x_{ij}] \in M_N(A)$, we denote by $\bar X$ the matrix $[x_{ij}^*] \in M_N(A)$.
\end{notat}

\begin{defn}[\cite{VaWa96}] Let $N \ge 2$ be an integer and let $F \in \text{GL}_N(\C)$.
\begin{enumerate}
\item   The \textit{free unitary quantum group} $U_F^+$ (with parameter matrix $F$) is the compact quantum group given by the universal C$^\ast$-algebra 
\begin{align} \label{eqn:defining} C^u(U^+_F) = C^*\big(\{v_{ij}\}_{1 \le i,j \le N} \ | \ V = [v_{ij}] \text{ is unitary } \& \ F \bar V F^{-1} \text{ is unitary } \big),\end{align} together with coproduct $\Delta:C^u(U^+_F) \to C^u(U^+_F) \otimes C^u(U_F^+)$ given by
\[\Delta(v_{ij}) = \sum_{k=1}^N v_{ik} \otimes v_{kj} \qquad (1 \le i,j \le N).\]
\item Let $\lambda \in \R\backslash \{0\}$ and assume that $F \bar F = \lambda 1$. The \textit{free orthogonal quantum group} $O_F^+$ (with parameter matrix $F$) is the compact quantum group given by the universal C$^\ast$-algebra 
\begin{align} \label{eqn:defining} C^u(O^+_F) = C^*\big(\{u_{ij}\}_{1 \le i,j \le N} \ | \ U = [u_{ij}] \text{ is unitary } \& \ U = F \bar U F^{-1}\big),\end{align} together with coproduct $\Delta:C^u(O^+_F) \to C^u(O^+_F) \otimes C^u(O_F^+)$ given by
\[\Delta(u_{ij}) = \sum_{k=1}^N u_{ik} \otimes u_{kj} \qquad (1 \le i,j \le N).\]
\end{enumerate}
\end{defn}

\begin{rem}
In the above definition, the coproduct $\Delta$ is defined so that the matrix of generators $V = [v_{ij}]$ (resp. $U = [u_{ij}]$) is always a (fundamental) representation of the compact matrix quantum group $U^+_F$ (resp. $O_F^+$).  
\end{rem}
\begin{rem}
Note that the above definition for $O^+_F$ makes sense for any $F\in \text{GL}_N(\C)$.  The additional condition $F \bar F  \in \R 1$ is equivalent to the requirement that $U$ is always an {\it irreducible} representation of $O^+_F$.  Indeed, Banica \cite{Ba96} showed that $U$ is irreducible if and only if $F\bar F  = \lambda 1$  ($\lambda \in \R$).
\end{rem}

We now define Woronowicz' $SU_q(2)$ quantum group.

\begin{defn}[\cite{wo2}]
Let $q \in [-1,1]\backslash \{0\}$ and let $C(SU_q(2)) = C^*(\alpha, \gamma)$ be the universal unital C$^\ast$-algebra generated by elements $\alpha, \gamma$ subject to the relations which make the $2 \times 2$ matrix 
\[W = [w_{ij}] = \left(\begin{matrix} \alpha & -q\gamma^*\\
\gamma & \alpha^* \end{matrix}\right) \quad \text{unitary}.
\]
Defining $\Delta(w_{ij}) = \sum_{k=1}^2 w_{ik} \otimes w_{kj}$, $\Delta$ extends to a co-product on $C(SU_q(2))$, yielding Woronowicz'  $SU_q(2)$-quantum group.
\end{defn}

\begin{rem}
Actually, the quantum groups $SU_q(2)$ are just spacial cases of free orthogonal quantum groups.  Indeed, for $q \in [-1,1]\backslash \{0\}$, we have \[SU_q(2) = O^+_{F_q} \quad \text{where} \quad F_q = \left( \begin{matrix} 0 &1\\
-q^{-1} & 0 \end{matrix}\right) \in \text{GL}_{2}(\C).\] 
That is, there is a $\ast$-isomorphism $C(O^+_{F_q}) \to C(SU_q(2))$ given by  $u_{ij} \mapsto w_{ij}$.  This fact can be readily checked by comparing generators and relations.
\end{rem}

\section{Amenability and co-amenability} \label{amen}

We begin our discussion of approximation properties of locally compact quantum groups with arguably the most fundamental and natural approximation properties: {\it amenability}. Recall that a locally compact group $G$ is said to be amenable if there exists a finitely additive, left translation-invariant Borel probability measure on $G$.  In functional analytic language, this means the existence of a state $m \in L^\infty(G)^*$ such that 
\[
m(L_xf) = m(f) \qquad (f \in L^\infty(G), \ g \in G),
\] 
where $L_gf: t \mapsto f(g^{-1}t)$ is the left translate of $f$ by $g$.   Such a state $m$ is called a {\it left invariant mean} on $L^\infty(G)$.  The class of amenable groups is quite broad and includes, for example, all compact, abelian, solvable, and nilpotent locally compact groups.  For a very nice introduction to the topic of amenability for locally compact groups, we refer the reader to the monographs \cite{Pa, Ru}.  

Our goal now is to extend he concept of amenability to general locally compact quantum groups.  To do this, we first observe that if $m \in L^\infty(G)^*$ is left-invariant, then $m$ is also invariant under the natural dual right action of $L^1(G)$ on $L^\infty(G)$:
\[
(f, \omega) \mapsto f \star \omega = (\omega \otimes \iota)\Delta(f) \qquad (\omega \in L^1(G), \ f \in L^\infty(G)),
\] 
where $\Delta f (s,t) = f(st)$ is the usual corproduct on $L^\infty(G)$.
Thus, if $m$ is a left-invariant mean on $L^\infty(G)$, then 
\begin{align} \label{top-inv}
m(\omega \otimes \iota)\Delta(f)= m(f) \int_{G}\omega(x)dx = \omega(1)m(f)  \qquad (\omega \in L^1(G), \ f \in L^\infty(G)).
\end{align}
In \cite{Ru}, the above condition \eqref{top-inv} is called {\it topological left invariance} for $m$.  Although topological left-invariance is an a priori weaker notion than left-invariance, it is known \cite[Chapter 1]{Ru} that the existence of a topologically invariant mean on $L^\infty(G)$ is equivalent to amenability of $G$.  Since the notion of topological left-invariance translates directly to the quantum setting, we make the following definition.

\begin{defn}
A lcqg is $\G$ is called {\it amenable} if there exists a state $m \in L^\infty(\G)^*$ such that $m(\omega \otimes \iota)\Delta = \omega(1)m$ for all $\omega \in L_1(\G)$.   We call such a state $m$ a left-invariant mean on $L^\infty(\G)$. 
 \end{defn}

\begin{rem}
There is the obvious notion of a right-invariant mean $m \in L^\infty(\G)^*$, which satisfies $m(\iota \otimes \omega)\Delta = \omega(1)m$ for all $\omega \in L^1(\G)$.  Hence we could define amenability in terms of right-invariant means.  It turns out that our preference for left over right is irrelevant, since an easy calculation shows that a state $m \in L^\infty(\G)^*$ is a left-invariant mean if and only if $m\circ R$ is a right-invariant mean, where $R = S \circ \tau_{i/2}:L^\infty(\G) \to L^\infty(\G)$ is the unitary antipode.  (Recall that $R$ is a $\ast$-anti-automorphism, $R^2 = \iota$, and $\Delta = \sigma (R \otimes R)\Delta R$, where $\sigma$ is the tensor flip map.)   
\end{rem}

\subsection{Some examples of amenable lcqgs}

At this point we should mention at least a few non-trivial examples:  Every compact quantum group is clearly amenable (it's Haar state is an invariant mean), among the discrete quantum groups we will see later on that Woronowicz' $\widehat{SU_q(2)}$ is amenable and $\widehat{O^+_F}$ is amenable if and only if $F \in \text{GL}_2(\C)$.  In fact, it is more generally known that the dual $\widehat{G_q}$ of the $q$-deformation of a simply connected semi-simple compact Lie group $G$ is amenable (see, for example,  \cite{NeTu13} and the references therein).  In the locally compact quantum case, we just content ourselves to mention the work of Caspers \cite{Ca14}, showing that that dual of $SU_q(1,1)$ is amenable.

\subsection{Amenability, nuclearity, and injectivity}
In this section we will show how amenability for a lcqg is intimately related to two of the most fundamental approximation properties in operator algebra theory: injectivity and nuclearity.  Let is begin by recalling these concepts.

\begin{defn}
Let $H$ be a Hilbert space and  $M \subset \mc B(H)$ a von Neumann algebra.  $M$ is called {\it injective} if there exists a conditional expectation $E:B(H) \to M$.  I.e., a unital completely positive map $E$ from $\mc B(H)$ onto $M$ such that $E^2 = E$.
\end{defn}

Although the above definition of injectivity may, at the face of it, appear to not have anything to do with ``approximations'', it turns out to be equivalent to the weak* completely positive approximation property, thanks to some deep work of Connes \cite{co1}.  For future reference, let us recall this concept.

\begin{defn} \label{wcpap}
A von Neumann algebra $M$ has the {\it  weak* completely positive approximation property (w*CPAP)} if there exists a net of finite rank, $\sigma$-weakly continuous, completely positive contractions $T_\alpha:M \to M$ such that 
\[
T_\alpha \to \id_M \qquad \text{pointwise $\sigma$-weakly}.
\] 
\end{defn}

In the C$^\ast$-algebra world, the corresponding analogue of injectivity is nuclearity.  Just like injectivity, nuclearity has many equivalent characterizations (see \cite[Chapter 2]{BrOz08} for example).  In the following, we take the so-called {\it completely positive approximation property} as our definition of nuclearity. 

\begin{defn} \label{cpap}
A C$^\ast$-algebra is {\it nuclear}  if there exists a net of finite rank, completely positive contractions $T_\alpha:A \to A$ such that 
\[
T_\alpha \to \id_A \qquad \text{pointwise in norm}.
\] 
\end{defn}

We now come to a result that connects amenability to injectivity/nuclearity. 

\begin{thm}[\cite{BeTu03}] \label{injectivity}
Let $\G$ be an amenable lcqg with Pontryagin dual $\hG$, and let $\hat\pi:C_0^u(\hG) \to \mc B(H)$ be a non-degenerate $\ast$-representation.  Then $\hat\pi(C_0^u(\hG))$ is nuclear, and $\hat\pi(C_0^u(\hG))''$ is injective.  In particular, $C_0^u(\hG)$ and $C_0(\hG)$ are nuclear, and $L^\infty(\hG)$ is injective. 
\end{thm}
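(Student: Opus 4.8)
The plan is to manufacture, for each non-degenerate representation $\hat\pi : C_0^u(\hG)\to\mc B(H)$, a conditional expectation of $\mc B(H)$ onto the commutant of the generated von Neumann algebra, obtained by averaging an adjoint coaction against an invariant mean. I would start from amenability: it supplies a left-invariant mean, and by the Remark following the definition of amenability I may pass to a \emph{right}-invariant mean $m\in L^\infty(\G)^*$, i.e. a state satisfying $(m\otimes\iota)\Delta(x)=m(x)1$ for all $x\in L^\infty(\G)$. This choice of handedness is what will make the averaging idempotent.

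Next I would encode $\hat\pi$ by its unitary. Let $U_\pi\in L^\infty(\G)\,\overline{\otimes}\,\mc B(H)$ be the unitary associated to $\hat\pi$ (the image of the semi-universal unitary under $\iota\otimes\hat\pi$), so that $\hat\pi(C_0^u(\hG))$ is the norm closure of $\{\pi(\omega)=(\omega\otimes\iota)U_\pi:\omega\in L^1(\G)\}$ and $N:=\hat\pi(C_0^u(\hG))''$ is the von Neumann algebra it generates. I would define $\gamma:\mc B(H)\to L^\infty(\G)\,\overline{\otimes}\,\mc B(H)$ by $\gamma(T)=U_\pi^*(1\otimes T)U_\pi$. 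Since $U_\pi$ and $1\otimes T$ both lie in $L^\infty(\G)\,\overline{\otimes}\,\mc B(H)$, this is a well-defined normal unital $\ast$-homomorphism, and a direct computation using the relation $(\Delta\otimes\iota)U_\pi=U_{\pi,13}U_{\pi,23}$ shows that $\gamma$ is a left coaction, $(\Delta\otimes\iota)\gamma=(\iota\otimes\gamma)\gamma$.

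Then I would average. The slice $E:=(m\otimes\iota)\circ\gamma:\mc B(H)\to\mc B(H)$ is well-defined and unital completely positive even though $m$ is not normal, because $(\iota\otimes\omega_{\xi,\eta})\gamma(T)\in L^\infty(\G)$ for all $\xi,\eta$. Feeding the coaction identity into $\gamma\circ E$ and then collapsing the new leg with right-invariance, $(m\otimes\iota)\Delta=m(\cdot)1$, yields $\gamma(E(T))=1\otimes E(T)$; conversely $E$ fixes every $T$ with $\gamma(T)=1\otimes T$. Hence $E$ is a norm-one idempotent, so by Tomiyama's theorem a conditional expectation, onto the fixed-point algebra $\mc B(H)^\gamma=\{T:(1\otimes T)U_\pi=U_\pi(1\otimes T)\}$. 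Slicing the commutation $(1\otimes T)U_\pi=U_\pi(1\otimes T)$ in the first leg by arbitrary $\omega\in L^1(\G)$ identifies $\mc B(H)^\gamma$ with $\{T:T\pi(\omega)=\pi(\omega)T\ \forall\,\omega\}=\hat\pi(C_0^u(\hG))'=N'$, using that left slices by $L^1(\G)$ separate points of $L^\infty(\G)\,\overline{\otimes}\,\mc B(H)$. Thus $N'$ is injective, and since a von Neumann algebra is injective precisely when its commutant is, $N=\hat\pi(C_0^u(\hG))''$ is injective.

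Finally I would harvest the named consequences. Taking $\hat\pi=\hat\pi_\lambda$ the left regular representation gives injectivity of $L^\infty(\hG)$. Taking $\hat\pi$ to be the universal representation of the C$^\ast$-algebra $C_0^u(\hG)$ makes $N=C_0^u(\hG)^{**}$ injective, whence $C_0^u(\hG)$ is nuclear by the Connes--Choi--Effros equivalence of nuclearity with injectivity of the bidual; as $C_0(\hG)$ and every $\hat\pi(C_0^u(\hG))$ are quotients of $C_0^u(\hG)$, they are nuclear as well. The main obstacle is the third and fourth steps: choosing the handedness of the coaction so that it matches the side on which $m$ is invariant, so that the average is genuinely idempotent, and then correctly identifying its range as the commutant $N'$ rather than $N$. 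The two global transfers at the end, from injectivity of $N'$ to injectivity of $N$ and from injectivity of the bidual to nuclearity, rest on the deep theorems of Connes and of Choi--Effros, which I would invoke as black boxes.
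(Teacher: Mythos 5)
Your proposal is correct and follows essentially the same route as the paper's proof: slice the adjoint action of the representation unitary by an invariant mean to produce a unital completely positive idempotent from $\mc B(H)$ onto $\hat\pi(C_0^u(\hG))'$, identify its range as the fixed-point algebra of the coaction, and then invoke Tomiyama (injectivity passes between a von Neumann algebra and its commutant) together with Connes--Choi--Effros for nuclearity. The only deviations are harmless: you conjugate by $U_\pi^*$ and pair this with a \emph{right}-invariant mean---which is in fact the internally consistent choice, since then $(\Delta\otimes\iota)\gamma=(\iota\otimes\gamma)\gamma$ holds exactly, whereas the paper's $\alpha(x)=U(1\otimes x)U^*$ with a right-invariant mean tacitly uses the flipped coproduct---and you harvest nuclearity via the universal representation and quotient-stability of nuclearity rather than representation by representation.
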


\begin{proof}
Let $\pi: L^1(\G) \to \mc B(H)$ be the associated  non-degenerate $\ast$-representation and let $U = U_\pi \in M(C_0(\G)\otimes \mc K(H))$  be the corresponding unitary representation of $\G$.  First note that the injectivity of $\pi(C_0^u(\hG))''$ is equivalent to the nuclearity of $\pi(C_0^u(\hG))$ by some very deep work of Connes \cite{co1} and Choi-Effros \cite{ChEf77}. Moreover, by a result of Tomiyama \cite{To}, injectivity of $\pi(C_0^u(\hG))''$ is equivalent to that of $\pi(C_0^u(\hG))'$.  It is this latter fact that we will now prove.

To this end, let $m$ be a right-invariant mean on $L^\infty(\G)$, let $\alpha:\mc B(H) \to L^\infty(\G) \overline{\otimes }\mc B(H)$ be given by 
\[
\alpha(x) = U(1 \otimes x)U^* \qquad (x \in \mc B(H)),
\] and define $E:\mc B(H) \to \mc B(H)$ by \[E(x) = (m \otimes \iota)(\alpha(x)) \qquad (x \in \mc B(H)). \]
It is clear from the definitions that $E$ is a unital and completely positive (ucp) map.  We now show that $E(\mc B(H)) = \pi(C_0^u(\hG))'$ and $E^2 = E$.  Since $(\Delta \otimes \iota)U = U_{13}U_{23}$, a simple calculation  shows that 
\[
(\iota \otimes \alpha) \alpha  = (\Delta \otimes \iota) \alpha.
\]
From this equality, we get 
\begin{align*}
\alpha(E(x)) &= (m \otimes \alpha)\alpha(x) = (m \otimes \iota) (\Delta \otimes \iota)(\alpha(x)) \\
&=1 \otimes (m \otimes \iota)\alpha(x) = 1 \otimes E(x) \qquad (x \in \mc B(H)),
\end{align*}
where the last equality follows from the right-invariance of $m$.  As a consequence, \[E^2(x) = (m \otimes \iota)(\alpha(E(x))) = (m \otimes \iota) (1 \otimes E(x)) = E(x) \qquad (x \in \mc B(H)).\]
Next, observe that $E(\mc B(H)) = \{y \in \mc B(H): \alpha(y) = 1 \otimes y\}$.  Indeed, we have already shown the inclusion ``$\subseteq$'', and the inclusion ``$\supseteq$'' follows because the equality $\alpha(y) = 1 \otimes y$ implies that $E(y) = (m \otimes \iota)(1 \otimes y) = y$.
To finish, we note that
\begin{align*}
\{y \in \mc B(H): \alpha(y) = 1 \otimes y\} &= \{y \in \mc B(H): (1 \otimes y)U = U(1 \otimes y)\} \\
&=\{y \in \mc B(H): \pi(\omega)y = y\pi(\omega) \  \forall \omega\in L^1(\G)\} \\
&= \pi(L^1(\G))' = \hat \pi(C^u_0(\hG))'.
\end{align*}
\end{proof}

The above theorem raises the following natural question about the connection between injectivity/nuclearity and  amenability.

\begin{quest} 
For a locally compact quantum group $\G$, does the injectivity of $L^\infty(\hG)$ or the nuclearity of $C_0(\hG)$ (respectively $C^u_0(\hG)$) characterize the amenability of $\G$?
\end{quest}

The answer to the above question (as stated) turns out to be no, even in the classical case.  Let $G$ be any connected locally compact group.  Connes \cite{co1} showed for instance that the group von Neumann algebra $\mc L(G) = L^\infty(\hat G)$ is always injective and that the reduced group C$^\ast$-algebra $C^*_\lambda(G) = C_0(\hat G)$ is always nuclear.  On the other hand, we will see later in Theorem \ref{unimodular} that the answer is yes, provided $\G$ is a unimodular discrete quantum group.  In the general case, one has to impose some additional hypotheses on the conditional expectations $E:B(L^2(\G)) \to L^\infty(\hG)$ in order to yield a characterization of amenability. 

One such additional hypothesis is given by the following definition.

\begin{defn}[Soltan-Viselter \cite{SoVi14}]
A lcqg $\G$ is called {\it quantum injective} if there exists a conditional expectation $E:\mc B(L^2(\G)) \to L^\infty(\G)$ such that $E(L^\infty(\hG)) \subset ZL^\infty(\G)$ (where $ZL^\infty(\G)$ denotes the center of $L^\infty(\G)$).
\end{defn}

It turns out that if one considers quantum injectivity instead of ordinary injectivity, then one obtains a characterization of amenability.  The theorem presented below is from \cite{SoVi14}, but similar results were obtained independently by Crann-Neufang in \cite{CrNe16}.

\begin{thm}[Soltan-Viselter \cite{SoVi14}, Crann-Neufang \cite{CrNe16}]
The following conditions are equivalent for a lcqg $\G$:
\begin{enumerate}
\item $\G$ is amenable.  
\item There exists a conditional expectation $E:\mc B(L^2(\G)) \to L^\infty(\hG)$ with $E(L^\infty(\G)) = \C1$.
\item $\hG$ is quantum injective.  
\end{enumerate}
\end{thm}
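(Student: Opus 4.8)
The plan is to establish the cycle of implications $(1) \Rightarrow (2) \Rightarrow (3) \Rightarrow (1)$. The implication $(2) \Rightarrow (3)$ is essentially a tautology: unravelling the definition of quantum injectivity for $\hG$ and invoking Pontryagin duality ($\widehat{\hG} = \G$, together with the fact that $\hG$ is standardly represented on the same Hilbert space $L^2(\G)$), condition $(3)$ asks precisely for a conditional expectation $E \colon \mc B(L^2(\G)) \to L^\infty(\hG)$ with $E(L^\infty(\G)) \subseteq ZL^\infty(\hG)$. Since $\C 1 \subseteq ZL^\infty(\hG)$, any $E$ witnessing $(2)$ also witnesses $(3)$.

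For $(3) \Rightarrow (1)$ I would manufacture a left-invariant mean out of the conditional expectation. Given $E$ as in $(3)$, I fix any state $\mu$ on the centre $ZL^\infty(\hG)$ and set $m = \mu \circ (E|_{L^\infty(\G)})$, a state on $L^\infty(\G)$. To verify invariance, the structural inputs are that $W \in L^\infty(\G)\overline{\otimes}L^\infty(\hG)$ and that, by Tomiyama's theorem, $E$ is an $L^\infty(\hG)$-bimodule map. For $x \in L^\infty(\G)$ one has $(\omega \otimes \iota)\Delta(x) = (\omega \otimes \iota)(W^*(1\otimes x)W)$; slicing $W$ through its second ($L^\infty(\hG)$-)leg and pulling those factors out of $E$ by bimodularity, the hypothesis $E(x) \in ZL^\infty(\hG)$ lets the pulled-out factors recombine as $(\omega \otimes \iota)(W^*W) = \omega(1)1$, so that $E((\omega\otimes\iota)\Delta(x)) = \omega(1)E(x)$ and hence $m((\omega\otimes\iota)\Delta(x)) = \omega(1)m(x)$. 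The one point demanding care is the interchange of $E$ with the slice of $W$: because a conditional expectation of $\mc B(L^2(\G))$ need not be normal, this step must be justified through the module property of $E$ applied to a weak$^*$-approximating net rather than a naive infinite sum.

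The substance of the theorem lies in $(1) \Rightarrow (2)$, the construction of the expectation, and this is where I expect the main obstacle. Amenability furnishes a left-invariant mean, and combining it with the unitary antipode $R$ (cf. the Remark following the definition of amenability) I may assume $m$ is two-sided invariant. The natural idea is to reuse the averaging device from the proof of Theorem \ref{injectivity}, namely $E(x) = (m \otimes \iota)(U(1\otimes x)U^*)$; however, applied to the left regular representation this produces a conditional expectation onto $L^\infty(\hG)'$ rather than onto $L^\infty(\hG)$. To land in $L^\infty(\hG)$ itself, one must feed in a representation of $\G$ whose image generates the commutant $L^\infty(\hG)'$ --- the right regular representation, with unitary $U_\rho = \Sigma V \Sigma \in L^\infty(\G)\overline{\otimes}L^\infty(\hG)'$ coming from the right fundamental unitary $V \in L^\infty(\hG)'\overline{\otimes}L^\infty(\G)$ (which satisfies $\Delta(x) = V(x\otimes 1)V^*$). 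With this choice Theorem \ref{injectivity} yields that $E(x) = (m\otimes\iota)(U_\rho(1\otimes x)U_\rho^*)$ is a conditional expectation onto $\rho(L^1(\G))' = L^\infty(\hG)$. It then remains to check $E(L^\infty(\G)) = \C 1$: for $x \in L^\infty(\G)$ the adjoint action reproduces the coproduct, $U_\rho(1\otimes x)U_\rho^* = \Sigma\Delta(x)\Sigma$, so that $E(x) = (\iota\otimes m)\Delta(x) = m(x)1$ by left-invariance of $m$. The main difficulty to get right is thus the bookkeeping of legs and commutants ensuring the range is exactly $L^\infty(\hG)$ (and not its commutant) while simultaneously retaining the collapse $E(L^\infty(\G)) = \C 1$; this is exactly what forces the use of the right rather than the left regular representation and a genuinely two-sided invariant mean.
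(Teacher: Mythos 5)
Your overall architecture (the cycle $(1)\Rightarrow(2)\Rightarrow(3)\Rightarrow(1)$, with the right fundamental unitary driving $(1)\Rightarrow(2)$ and Tomiyama bimodularity driving $(3)\Rightarrow(1)$) is the same as the paper's, but two steps have gaps, one of them serious. The serious one is in $(3)\Rightarrow(1)$: you propose to expand $W$ through its second leg, pull the $L^\infty(\hG)$-factors out of $E$ by bimodularity, and justify the interchange ``through the module property of $E$ applied to a weak$^*$-approximating net.'' This does not work as stated: any approximation of $W$ by finite sums of elementary tensors converges only $\sigma$-weakly, and since $E$ need not be normal you cannot push that limit through $E$ --- bimodularity controls $E$ on each approximant but says nothing about $E$ of the limit. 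The paper's fix is to approximate the \emph{other} object: since Tomiyama makes $E$ an $L^\infty(\hG)$-bimodule map, the Effros--Kishimoto theorem \cite{EfKi87} writes $E$ as a point-$\sigma$-weak limit of elementary operators $E_\alpha(x)=\sum_i a_{i,\alpha}xb_{i,\alpha}$ with coefficients $a_{i,\alpha},b_{i,\alpha}\in L^\infty(\hG)'$. Each $E_\alpha$ commutes exactly with the slice of $W$ (its coefficients commute with the second leg of $W$, which lies in $L^\infty(\hG)$), and passing to the limit in $\alpha$ is then legitimate because the only map applied afterwards, $y\mapsto(\omega\otimes\iota)\bigl(W^*(1\otimes y)W\bigr)$, is normal. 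Without Effros--Kishimoto (or a substitute playing the same role) your interchange step fails.

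The second issue is in $(1)\Rightarrow(2)$: your construction $E=(m\otimes\iota)\bigl(U_\rho(1\otimes\cdot\,)U_\rho^*\bigr)$ with $U_\rho=\Sigma V\Sigma$ genuinely requires a \emph{two-sided} invariant mean (right-invariance for the expectation property onto $\rho(L^1(\G))'=L^\infty(\hG)$, left-invariance for the collapse $E|_{L^\infty(\G)}=m(\cdot)1$), and you justify its existence by ``combining [a left-invariant mean] with the unitary antipode $R$.'' But $m\circ R$ is merely right-invariant; composing with $R$ does not produce a single mean invariant on both sides. The fact you need is true --- from a left-invariant $m_l$ and right-invariant $m_r$ one builds a two-sided invariant mean, e.g.\ $n(x)=m_l\bigl((\iota\otimes m_r)\Delta(x)\bigr)$ with the slice taken in the dual-pairing sense, cf.\ \cite{BeTu03} --- but it requires this extra convolution argument. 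Note that the paper sidesteps the issue entirely: it keeps $V\in L^\infty(\hG)'\overline{\otimes}L^\infty(\G)$ unflipped and sets $E(x)=(\iota\otimes m)\bigl(V(x\otimes 1)V^*\bigr)$, applying the mean to the $L^\infty(\G)$-leg; the relevant coassociativity identity then places $\Delta$ on exactly the leg killed by left-invariance, so a single left-invariant mean yields both the conditional expectation onto $(L^\infty(\hG)')'=L^\infty(\hG)$ and the collapse $E(x)=m(x)1$ for $x\in L^\infty(\G)$.
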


\begin{proof}[Sketch]
$(1) \implies (2)$.  Fix a left-invariant mean $m$ and consider the right-regular representation $V \in L^\infty(\hG)'\overline{\otimes }L^\infty(\G)$, which is unitary operator that implements the coproduct on $L^\infty(\G)$ via $\Delta(x) = V(x \otimes 1)V^*$.  Define $E:\mc B(L^2(\G)) \to \mc B(L^2(\G))$ by  \[\omega(E(x)) = m(\omega \otimes \iota)(V(x \otimes 1)V^*) \qquad (x \in \mc B(L^2(\G)), \omega \in \mc B(L^2(\G))_*).\]
Then, as in the proof of Theorem \ref{injectivity}, one can show $E:\mc B(L^2(\G)) \to (L^\infty(\hG)')' = L^\infty(\hG)$ is a conditional expectation.  Moreover \[E(x) = (\iota\otimes m)(V(x \otimes 1)V^*)  =  (\iota\otimes m)\Delta(x) = m(x)1 \qquad (x \in L^\infty(\G)). \]

$(2) \implies (3)$ is trivial.  

$(3) \implies (1)$.
 Let $E$ be a conditional expectation satisfying $(3)$.  Fix a state $\rho \in L^\infty(\hG)^*$ and define a state $m \in L^\infty(\G)^*$ by 
\[
m = \rho \circ E|_{L^\infty(\G)}.
\] 
We claim that $m$ is a left-invariant mean on $L^\infty(\G)$.  To see this, denote by \[\bar \Delta: \mc B(L^2(\G)) \to \mc B(L^2(\G))\overline{\otimes} \mc B(L^2(\G)); \qquad \bar \Delta(x) = W^*(1 \otimes x)W\] the canonical extension to $\mc B(L^2(\G))$ of the coproduct $\Delta$.  Since any conditional expectation $E:\mc B(L^2(\G)) \to L^\infty(\hG)$ is automatically an $L^\infty(\hG)$-bimodule map \cite{To}  (i.e., $E \in \mc {CB}_{L^\infty(\hG)}(\mc B(L^2(\G)))$), we can use a result of Effros-Kishimoto \cite{EfKi87}, which says that $E$ is a pointwise $\sigma$-weak limit of so-called {\it elementary operators}:
\[
E = \lim_\alpha E_\alpha \quad \text{pointwise $\sigma$-weakly, where} \quad E_\alpha(x) = \sum_{i=1}^{n_\alpha} a_{i,\alpha} xb_{i,\alpha} \qquad (a_{i,\alpha}, b_{i,\alpha} \in L^\infty(\hG)').
\] 
Now fix $\omega \in L^1(\G)$ and $x \in L^\infty(\G)$.  Then using the fact that $W \in L^\infty(\G) \overline{\otimes} L^\infty(\hG)$, we get the following $\sigma$-weak limit.
\begin{align*}
E(\omega \otimes \iota)\bar\Delta(x) &=E(\omega \otimes \iota)(W^*(1 \otimes x)W) \\
&=\lim_\alpha \sum_{i=1}^{n_\alpha} (\omega \otimes \iota) (W^*(1 \otimes a_{i,\alpha} xb_{i,\alpha})W) \\
&= (\omega \otimes \iota)W^*(1 \otimes E(x))W \\
&=  (\omega \otimes \iota) (1 \otimes E(x)),
\end{align*}
where the last equality follows from the fact that $E(L^\infty(\G)) \subseteq ZL^\infty(\hG)$.  Applying $\rho$ to both sides of the above equality, we get 
\[
m(\omega\otimes \iota)\Delta(x) = m(x)\omega(1) \qquad (x \in L^\infty (\G)).
\]

\end{proof}

\subsection{Co-amenability}

Up to this point, we have essentially two ways to detect the amenability of a lcqg $\G$.  Namely, finding a left or right-invariant mean on $L^\infty(\G)$, or constructing a suitable conditional expectation $E:\mc B(L^2(\G)) \to L^\infty(\hG)$.  In order to develop further (possibly simpler) means of checking amenability, we now turn to a notion dual to amenability, called co-amenability. 

\begin{defn}
A locally compact quantum group $\G$ is called {\it co-amenable} if there exists a state $\epsilon:C_0(\G) \to \C$ such that $(\iota \otimes \epsilon)\Delta = \iota$.  Such a state $\epsilon$ is called a {\it co-unit} for $C_0(\G)$.  \end{defn}

\begin{lem} If the co-unit $\epsilon$ exists, then  it is unique.
\end{lem}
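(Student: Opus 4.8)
The plan is to show that \emph{any} co-unit $\epsilon$ is forced to take prescribed values on a norm-dense subalgebra of $C_0(\G)$, so that two co-units must coincide by continuity. The natural dense subalgebra to use is $\hat\lambda(L^1(\hG)) = \{(\iota \otimes \hat\omega)(W^*) : \hat\omega \in L^1(\hG)\}$, whose norm closure is exactly $C_0(\G)$. The whole argument then reduces to the single identity
\[(\epsilon \otimes \iota)(W) = 1 \in M(C_0(\hG)),\]
since, granting this, for every $\hat\omega \in L^1(\hG)$ one computes
\[\epsilon\big((\iota \otimes \hat\omega)(W^*)\big) = (\epsilon \otimes \hat\omega)(W^*) = \hat\omega\big((\epsilon \otimes \iota)(W^*)\big) = \hat\omega(1),\]
using that $(\epsilon \otimes \iota)(W^*) = \big((\epsilon \otimes \iota)(W)\big)^* = 1$ because $\epsilon$ is a state. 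As the right-hand side $\hat\omega(1)$ does not depend on $\epsilon$, two co-units agree on $\hat\lambda(L^1(\hG))$ and hence, being norm-continuous, agree on all of $C_0(\G)$.

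To prove $(\epsilon \otimes \iota)(W) = 1$, I would slice the fundamental-unitary relation $(\Delta \otimes \iota)W = W_{13}W_{23}$ (the instance of the bicharacter identity for the left regular representation, where $U_\lambda = W$; equivalently a consequence of the pentagon equation). Applying the slice map $\iota \otimes \epsilon \otimes \iota$ to both sides, the left-hand side collapses by the defining property of the co-unit:
\[(\iota \otimes \epsilon \otimes \iota)(\Delta \otimes \iota)(W) = \big((\iota \otimes \epsilon)\Delta \otimes \iota\big)(W) = W.\]
On the right-hand side $W_{13}$ is trivial on the sliced (second) leg, so the bimodule property of slice maps gives
\[(\iota \otimes \epsilon \otimes \iota)(W_{13}W_{23}) = W_{13}\,(1 \otimes c) = W(1 \otimes c), \qquad c := (\epsilon \otimes \iota)(W),\]
reading the result in the remaining legs $1,3$. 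Comparing the two sides yields $W = W(1 \otimes c)$, and multiplying on the left by $W^*$ (unitarity of $W$) gives $1 \otimes c = 1$, i.e. $c = (\epsilon \otimes \iota)(W) = 1$, as required.

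The step I expect to require the most care is the justification of these slice-map manipulations at the level of the multiplier algebra: one must extend $\epsilon$ to a strictly continuous state on $M(C_0(\G))$, check that the identity $(\iota \otimes \epsilon)\Delta = \iota$ persists there, and verify that the bimodule identity $(\iota \otimes \epsilon \otimes \iota)(W_{13}W_{23}) = W_{13}(1 \otimes c)$ is legitimate for these multiplier-algebra elements. A conceptual pitfall worth flagging is that the defining condition $(\iota \otimes \epsilon)\Delta = \iota$ only says that $\epsilon$ is a \emph{right} identity for the convolution product on $C_0(\G)^*$ (indeed $\mu \star \epsilon = \mu$ for every $\mu$), and a one-sided identity in a Banach algebra need not be unique; it is precisely the quantum-group relation tying $\epsilon$ to $W$ that forces uniqueness. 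One could instead try to prove directly that a co-unit is automatically a \emph{left} identity (equivalently $(\epsilon \otimes \iota)\Delta = \iota$) and then conclude from $\epsilon_1 = \epsilon_1 \star \epsilon_2 = \epsilon_2$, but establishing the left-hand identity seems to require the antipode $R$ and is less transparent than the slicing argument above.
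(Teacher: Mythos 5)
Your proof is correct, but it follows a genuinely different route from the paper's. The paper stays entirely inside the convolution algebra $C_0(\G)^*$: using the unitary antipode and the relation $\sigma(R\otimes R)\Delta = \Delta R$, it observes that $\epsilon\circ R$ is a \emph{left} convolution identity, whence $\epsilon R = (\epsilon R)\star\epsilon = \epsilon$; thus any co-unit automatically satisfies $(\epsilon\otimes\iota)\Delta = \iota$ as well, and uniqueness follows from the standard left-identity-equals-right-identity argument. This is precisely the antipode route you mention and set aside at the end of your proposal; note that it is in fact very short (three lines, granted the relation $\Delta R = \sigma(R\otimes R)\Delta$ recalled earlier in the paper), so your assessment of it as less transparent is debatable. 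Your argument instead slices the relation $(\Delta\otimes\iota)W = W_{13}W_{23}$ to obtain $(\epsilon\otimes\iota)W = 1$, and then invokes duality --- the norm-density of $\hat\lambda(L^1(\hG))$ in $C_0(\G)$ --- to conclude that every co-unit takes the forced values $\epsilon(\hat\lambda(\hat\omega)) = \hat\omega(1)$. Your key identity is exactly implication $(1)\Rightarrow(4)$ of the paper's later characterization of co-amenability (Theorem \ref{co-amen-characterization}), proved there by the same slicing computation, so in effect you front-load that implication and extract uniqueness from it. What the paper's route buys is economy and the structurally useful by-product that the co-unit is a two-sided identity; what your route buys is independence from the antipode, at the cost of the multiplier-algebra/strict-continuity bookkeeping for slice maps (and the hermitian-state identity $(\epsilon\otimes\iota)(W^*) = ((\epsilon\otimes\iota)W)^*$) that you correctly flag as the delicate points.
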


\begin{proof}
Let $R$ denote the unitary antipode for $L^\infty(\G)$.  Since $\sigma(R\otimes R)\Delta = \Delta R$, we have $(\epsilon R \otimes \iota)\Delta = \iota$.  But then 
\[
\epsilon R = \epsilon R(\iota \otimes \epsilon)\Delta =\epsilon(\epsilon R \otimes \iota)\Delta = \epsilon \implies (\iota \otimes \epsilon )\Delta = (\epsilon \otimes \iota)\Delta = \iota.
\] 
Thus, if $\epsilon'$ is any other co-unit, we have $\epsilon'  = (\epsilon' \otimes \epsilon)\Delta = \epsilon$.  
\end{proof}

In contrast to the notion of amenability, co-amenability turns out to have many equivalent characterizations, several of which can be thought of as approximation properties for the quantum group (or its dual).
\begin{thm} \label{co-amen-characterization}
The following conditions are equivalent for a  lcqg $\G$:
\begin{enumerate}
\item $\G$ is co-amenable.
\item $C_0(\G)^*$ is unital (with respect to the convolution algebra structure induced by $\Delta$).
\item The canonical quotient map $\pi_u:C_0^u(\G) \to C_0(\G)$ is injective.
\item  There is a state $\epsilon \in C_0(\G)^*$ such that  $( \epsilon \otimes \iota) W = 1$. 
\item There is a net $(\xi_i)_{i} \subset L^2(\G)$ of unit vectors such that \[\lim_i\|W(\xi_i \otimes \eta) - (\xi_i \otimes \eta)\| = 0 \qquad (\eta \in L^2(\G)).\]
\item $L^1(\G)$ has a bounded (left/right/two-sided) approximate identity.

\end{enumerate}
\end{thm}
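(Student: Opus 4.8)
The plan is to prove the chain of equivalences by establishing a cycle of implications together with a few direct cross-links, exploiting the duality between $\G$ and $\hG$ throughout. The cleanest structure is to show $(1)\Leftrightarrow(3)$, $(1)\Leftrightarrow(4)$, $(4)\Leftrightarrow(5)$, $(1)\Leftrightarrow(2)$, and $(2)\Leftrightarrow(6)$. The unifying observation is that a co-unit for $C_0(\G)$, a unit for the convolution algebra $C_0(\G)^*$, and an identity-like element at the level of the fundamental unitary $W$ are all manifestations of the same object; one just has to keep careful track of which algebra and which completion each lives in.

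First I would handle the ``algebraic'' equivalences $(1)\Leftrightarrow(4)\Leftrightarrow(2)$. Recall $\Delta(x)=W^*(1\otimes x)W$, so for a state $\epsilon\in C_0(\G)^*$ the condition $(\iota\otimes\epsilon)\Delta=\iota$ can be rewritten using $W$; applying the slice $(\iota\otimes\epsilon)$ to $W^*(1\otimes x)W$ and comparing with $\iota$ should reduce the co-unit condition to $(\epsilon\otimes\iota)W=1$ (here one uses density of the slices $(\iota\otimes\omega)W$ and the leg-numbering bichar relation $(\Delta\otimes\iota)W=W_{13}W_{23}$ to move between ``$\epsilon$ on the left leg'' and ``$\epsilon$ implementing the co-unit''). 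For $(1)\Leftrightarrow(2)$, I would observe that the convolution product on $C_0(\G)^*$ is $\mu\star\nu=(\mu\otimes\nu)\Delta$, and that $\epsilon$ is precisely a two-sided unit for this product: $(\epsilon\star\mu)(x)=\mu\bigl((\epsilon\otimes\iota)\Delta(x)\bigr)=\mu(x)$ using the co-unit identity and the symmetrized version $(\epsilon\otimes\iota)\Delta=\iota$ already proved in the preceding lemma. Conversely a unit $\epsilon$ of the convolution algebra must be a state satisfying the co-unit relation, which is a routine unpacking.

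Next I would prove $(4)\Leftrightarrow(5)$, which is an amenability-style ``almost invariant vectors'' characterization. The forward direction uses that a state $\epsilon$ with $(\epsilon\otimes\iota)W=1$ can be weak$^*$-approximated by vector states $\omega_{\xi_i}$ on $C_0(\G)$; unwinding $\omega_{\xi_i}\otimes\iota$ applied to $W$ and using $\|W\|=1$ together with a standard convexity/Powers--Størmer type argument upgrades the weak convergence $(\omega_{\xi_i}\otimes\iota)W\to 1$ to the norm statement $\|W(\xi_i\otimes\eta)-(\xi_i\otimes\eta)\|\to 0$. The converse is the easier direction: given almost-invariant vectors, any weak$^*$ cluster point of $\omega_{\xi_i}$ furnishes the desired state $\epsilon$ with $(\epsilon\otimes\iota)W=1$.

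For $(1)\Leftrightarrow(3)$ I would use the universal object $\text{\reflectbox{$\W$}}\in M(C_0(\G)\otimes C_0^u(\hG))$ and the canonical surjection $\pi_u:C_0^u(\G)\to C_0(\G)$: the universal C$^*$-algebra $C_0^u(\G)$ always carries a co-unit $\epsilon_u$ coming from its coproduct, and $\pi_u$ is injective exactly when this co-unit descends to $C_0(\G)$, i.e. factors through $\pi_u$, which is the co-amenability condition. Finally $(2)\Leftrightarrow(6)$ links the unitality of $C_0(\G)^*$ to a bounded approximate identity in the predual $L^1(\G)$: a bounded approximate identity in $L^1(\G)$ has a weak$^*$ cluster point in $L^1(\G)^{**}\supseteq C_0(\G)^*$ which serves as the unit $\epsilon$, and conversely the existence of the unit together with a Hahn--Banach/Cohen-factorization argument produces the bounded approximate identity. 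I expect the \emph{main obstacle} to be the norm-convergence upgrade in $(4)\Rightarrow(5)$ and the careful identification in $(2)\Leftrightarrow(6)$ of where cluster points of an approximate identity land: one must verify that the weak$^*$ limit is genuinely a state on $C_0(\G)$ (not merely on $L^\infty(\G)$) and that it respects the convolution structure, which is exactly the delicate point distinguishing co-amenability from mere existence of invariant means.
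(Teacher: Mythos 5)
Much of your plan runs parallel to the paper's proof: your $(1)\Rightarrow(4)$ via the relation $(\Delta\otimes\iota)W=W_{13}W_{23}$ and your $(4)\Leftrightarrow(5)$ via weak$^*$ approximation of $\epsilon$ by vector states are essentially the paper's arguments (for the latter, no Powers--St{\o}rmer or convexity input is needed: since $W$ is unitary, $\|W(\xi_i\otimes\eta)-\xi_i\otimes\eta\|^2=2\|\eta\|^2-2\,\mathrm{Re}\,(\omega_{\xi_i}\otimes\omega_\eta)W$, and the slice $(\iota\otimes\omega_\eta)W$ lies in $C_0(\G)$, so weak$^*$ convergence of the vector states already gives the norm statement). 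But two of your links have genuine gaps. First, condition $(3)$: you assert that ``$\pi_u$ is injective exactly when the universal co-unit descends to $C_0(\G)$.'' The direction (injective $\Rightarrow$ descends) is trivial, but the converse is the entire content of $(1)\Rightarrow(3)$, and you give no mechanism for it; a bounded character factoring through a surjective $\ast$-homomorphism does not, by any general principle, force that homomorphism to be injective. The paper's missing ingredient is the fact from \cite{Ku01} that $L^1(\G)\subseteq C_0(\G)^*\subseteq C_0^u(\G)^*$ are identified as closed two-sided ideals: the pull-back $\epsilon\circ\pi_u$ of the reduced co-unit is a co-unit for $C_0^u(\G)$, hence equals $\epsilon_u$, the unit of the Banach algebra $C_0^u(\G)^*$; an ideal containing the unit is the whole algebra, so $C_0(\G)^*=C_0^u(\G)^*$, and then $\pi_u$ is injective by Hahn--Banach. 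Without this (or an equivalent substitute, such as Kustermans' theory of the universal co-unit), your $(1)\Leftrightarrow(3)$ is a restatement of the theorem, not a proof.

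Second, your link $(2)\Leftrightarrow(6)$, which is the only route to $(6)$ in your graph, does not work as described in the direction $(2)\Rightarrow(6)$. Weak$^*$-approximating the unit $\epsilon$ of $C_0(\G)^*$ by normal states $\omega_i\in L^1(\G)$ only controls $\omega_i$ on $C_0(\G)$ (and, with a standard argument for states, on $M(C_0(\G))$). But to show $\omega_i\star\omega\to\omega$ even weakly in $L^1(\G)$ you must test against arbitrary $x\in L^\infty(\G)$, and for such $x$ the slice $(\iota\otimes\omega)\Delta(x)$ lies only in $L^\infty(\G)$, not in $M(C_0(\G))$; convergence in $\sigma(C_0(\G)^*,C_0(\G))$ says nothing about these elements (compare: $\delta_n\to 0$ weak$^*$ on $c_0$ while $\delta_n(1)=1$ on $\ell^\infty=M(c_0)$). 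This topology mismatch is exactly why the paper reaches $(6)$ from $(5)$ rather than from $(2)$: the almost invariant vectors give vector states $\omega_{\xi_i}$, and the elementary estimate $|\omega_{\xi_i}\star\omega_{u,v}(x)-\omega_{u,v}(x)|\le\|x\|\bigl(\|W(\xi_i\otimes u)-\xi_i\otimes u\|\,\|v\|+\|W(\xi_i\otimes v)-\xi_i\otimes v\|\,\|u\|\bigr)$ is uniform over the unit ball of $L^\infty(\G)$, hence gives norm convergence in $L^1(\G)$ and a contractive left BAI (a two-sided one follows via the unitary antipode and a standard Banach algebra trick). You should therefore replace $(2)\Leftrightarrow(6)$ by the chain $(5)\Rightarrow(6)\Rightarrow(1)$, where $(6)\Rightarrow(1)$ is the paper's clean step: a weak$^*$ cluster point in $L^\infty(\G)^*$ of the BAI, restricted to $C_0(\G)$ and tested against $L^1(\G)$, satisfies $(\epsilon\otimes\iota)\Delta=\iota$.
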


\begin{proof}
$(1) \iff (2) \iff (3)$:  Clearly $\epsilon$ plays the role of the unit in the Banach algebra $C_0(\G)^*$, so (1) and (2) are equivalent.  Note that since \[L^1(\G) \subseteq C_0(\G)^* \subseteq C^u_0(\G)^*\] are identified as closed two-sided ideals \cite{Ku01}, it follows that $C_0(\G)^*$ is unital if and only if $C_0(\G)^* = C_0^u(\G)^*$, which happens if and only if $\pi_u$ is injective.

$(1) \implies (4)$:  From (1), we have 
\[W = (\iota \otimes \epsilon \otimes \iota)(\Delta \otimes \iota)W =(\iota \otimes \epsilon \otimes \iota)W_{13}W_{23} = W(1 \otimes (\epsilon \otimes \iota)W).\] 
Which implies that $(\epsilon \otimes \iota)W = 1$, since $W$ is unitary.  

$(4) \implies (5)$: Let $\epsilon \in C_0(\G)^*$ satisfy $(4)$, and extend it to a state $\tilde \epsilon$ on $\mc B(L^2(\G))$.  Since $L^\infty(\G)$ is in standard form on $L^2(\G)$, we can find a net of 
unit vectors $(\xi_i)_i\subset L^2(\G)$ such that \[\tilde \epsilon(x) = \lim_i \omega_{\xi_i}(x) \qquad (x \in L^\infty(\G)).\]  But then for all $\eta \in L^2(\G)$, we have $(\iota \otimes \omega_\eta)W \in C_0(\G) \subset L^\infty(\G)$, and therefore
\begin{align*}
\|W(\xi_i \otimes \eta) - (\xi_i \otimes \eta)\|^2 &= 2\|\eta\|^2 - 2 \text{Re}(W(\xi_i \otimes \eta)|\xi_i \otimes \eta) \\
&= 2\|\eta\|^2- 2 \text{Re} (\omega_{\xi_i} \otimes \omega_\eta)W\\
&\to 2\|\eta\|^2- 2 \text{Re} (\epsilon \otimes \omega_\eta)W \\
&= 2\|\eta\|^2- 2 \omega_\eta(1) =0.
\end{align*}

$(5) \implies (6)$:  Consider the net $(\omega_{i})_i \subset L^1(\G)$ with $\omega_{i} = \omega_{\xi_i}$.  Then for any $\omega = \omega_{u,v} \in L^1(\G)$ and $x \in L^\infty(\G)$,
\begin{align*}
&|\omega_i\star\omega(x) - \omega(x)|\\
&=  |\la(1 \otimes x)W(\xi_i \otimes u)|W\xi_i \otimes v\ra - \la(1 \otimes x)(\xi_i \otimes u)|\xi_i \otimes v\ra| \\
&=  |\la(1 \otimes x)(W(\xi_i \otimes u)- (\xi_i \otimes u))|W\xi_i \otimes v\ra + \la(1 \otimes x)(\xi_i \otimes u)|W(\xi_i \otimes v)-\xi_i \otimes v\ra| \\ 
&\le \|x\|\|(W(\xi_i \otimes u)- (\xi_i \otimes u))\| \|v\|+ \|x\|\|(W(\xi_i \otimes v)- (\xi_i \otimes v))\| \|u\| \to 0.
\end{align*}
This shows that $(\omega_i)_i$ is a contractive left BAI for $L^1(\G)$.  From this we get that $(\omega_iR)_i$ is a right BAI.  Then it is a standard Banach algebra fact that $(\omega_i + \omega_iR - \omega_iR\omega_i)_i$ is a bounded two-sided approximate identity.

$(6) \implies (1)$:  Without loss of generality, we can assume that our (left) bounded approximate identity $(\omega_i)_i$ converges weak$^\ast$ to $\tilde\epsilon \in L^\infty(\G)^*$.  Put $\epsilon =  \tilde\epsilon|_{C_0(\G)}$.  Then for any $x \in C_0(\G)$, $\omega \in L^1(\G)$, we have
\[
\omega( (\epsilon \otimes \iota)\Delta x) = \epsilon((\iota \otimes \omega)\Delta x) = \lim_i  \omega_i((\iota \otimes \omega)\Delta x) = \lim_i \omega_i\star \omega(x) = \omega(x).
\]
This shows that $(\epsilon \otimes \iota) \Delta = \iota$.  From this formula, it is easy to see that $\epsilon$ is a character on $C_0(\G)$, in particular, $\epsilon$ is a state.
\end{proof}

As an almost immediate consequence of the above theorem, we obtain the following important relationship between co-amenability and amenability with respect to Pontryagin duality.

\begin{cor} \label{co-amen-to-amen}
If $\hG$ is co-amenable, then $\G$ is amenable.
\end{cor}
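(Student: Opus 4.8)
The plan is to apply Theorem~\ref{co-amen-characterization} to the dual $\hG$ and convert its co-amenability into a supply of almost-invariant vectors for the fundamental unitary $W$ of $\G$, from which a left-invariant mean on $L^\infty(\G)$ can be assembled.

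First I would invoke condition~(5) of Theorem~\ref{co-amen-characterization}, now with $\hG$ in the role of $\G$. Since $L^2(\hG) = L^2(\G)$ and the fundamental unitary of $\hG$ is $\hat W = \Sigma W^*\Sigma$, the co-amenability of $\hG$ yields a net $(\xi_i)_i$ of unit vectors in $L^2(\G)$ such that $\|\hat W(\xi_i \otimes \eta) - \xi_i \otimes \eta\| \to 0$ for all $\eta \in L^2(\G)$. Conjugating by the flip $\Sigma$ and using that $\Sigma$ and $W$ are unitary, this is equivalent to
\[
\|W(\eta \otimes \xi_i) - \eta \otimes \xi_i\| \to 0 \qquad (\eta \in L^2(\G)),
\]
so the $\xi_i$ are asymptotically invariant under $W$ in the second leg.

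Next I would put $m_i = \omega_{\xi_i}$ and let $m \in L^\infty(\G)^*$ be a weak$^*$ cluster point of $(m_i)_i$; as each $m_i$ is a state and the state space is weak$^*$ compact, $m$ is again a state. To check left-invariance, fix $x \in L^\infty(\G)$ and a vector functional $\omega = \omega_{u,v}$, and compute, using $\Delta(x) = W^*(1 \otimes x)W$, that
\[
m_i\big((\omega \otimes \iota)\Delta(x)\big) = \la (1 \otimes x) W(u \otimes \xi_i) \,|\, W(v \otimes \xi_i) \ra.
\]
Replacing each occurrence of $W(\,\cdot\, \otimes \xi_i)$ by $\,\cdot\,\otimes \xi_i$ costs an error bounded by $\|x\|\big(\|v\|\,\|W(u\otimes \xi_i) - u\otimes\xi_i\| + \|u\|\,\|W(v\otimes\xi_i) - v\otimes\xi_i\|\big)$, which tends to $0$ along the net; hence the right-hand side converges to $\la u \,|\, v\ra\,\la x\xi_i \,|\, \xi_i\ra = \omega(1)\,m_i(x)$. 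Passing to the cluster point, and then extending from vector functionals to all of $L^1(\G)$ by norm-density and by boundedness of both sides in $\omega$, gives $m\big((\omega \otimes \iota)\Delta(x)\big) = \omega(1)m(x)$. Thus $m$ is a left-invariant mean and $\G$ is amenable.

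The estimate in the third paragraph is essentially identical to the one appearing in the implication $(5) \Rightarrow (6)$ of Theorem~\ref{co-amen-characterization}, so it presents no new difficulty. The only place demanding attention is the duality bookkeeping at the start: correctly turning the asymptotic $\hat W$-invariance of the $\xi_i$ into asymptotic $W$-invariance in the correct tensor leg via $\hat W = \Sigma W^*\Sigma$, and tracking which leg the vector state acts on throughout the computation. Everything past that point is routine.
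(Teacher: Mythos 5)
Your proof is correct and follows essentially the same route as the paper's: both apply condition (5) of Theorem \ref{co-amen-characterization} to $\hG$, use $\hat W = \Sigma W^* \Sigma$ to convert the asymptotic $\hat W$-invariance of the $\xi_i$ into asymptotic $W$-invariance in the second leg, and then show that any weak$^*$ cluster point of $(\omega_{\xi_i})_i$ is a left-invariant mean via the same error estimate. The only cosmetic difference is bookkeeping: you state the invariance as $m\big((\omega\otimes\iota)\Delta(x)\big) = \omega(1)m(x)$ and add an explicit density argument, while the paper writes the identical identity in the form $\omega_{\xi,\eta}\big((\iota\otimes m)\Delta(x)\big) = \omega_{\xi,\eta}(1)m(x)$.
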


\begin{proof}
If $\hG$ is co-amenable, then there exists a net of unit vectors $(\xi_i)_i\subset L^2(\G)$ such that  
\[
\|\hat W(\xi_i \otimes \eta) - \xi_i \otimes \eta\| \to 0 \qquad (\eta \in L^2(\G)).\] But since $W = \sigma \hat W \sigma^*$ is unitary, we get 
\[
\|W(\eta \otimes \xi_i) - \eta \otimes \xi_i\| \to 0 \qquad (\eta \in L^2(\G)).\]  
Now let $m \in L^\infty(\G)^*$ be any state which is a  weak$^\ast$-limit point of the net $(\omega_{\xi_i})_i$.  Then we have, for all $\xi,\eta \in L^2(\G)$, 
\begin{align*}
\omega_{\xi,\eta}((\iota \otimes m)\Delta(x)) &= \lim_i \langle W^*(1 \otimes x)W(\xi \otimes \xi_i)|(\eta \otimes \xi_i) \rangle \\
&= \lim_i \langle (1 \otimes x)(\xi \otimes \xi_i)|(\eta \otimes \xi_i) \rangle \\
&= \omega_{\xi,\eta}(1) \lim_{i}\omega_{\xi_i} (x) = \omega_{\xi,\eta}(1) m(x).
\end{align*} 
Therefore $m$ is a left-invariant mean on $L^\infty(\G)$.
\end{proof}

A natural question that arises from the above result is whether the converse of Corollary \ref{co-amen-to-amen} is true for all locally compact quantum groups.

\begin{quest}
Does the amenability of $\G$ imply the co-amenability of $\hG$?
\end{quest}

In its full generality, the above question is still open, however there are some special cases that the answer is known to be yes:  If $\G$ is a classical locally compact group, a theorem of Leptin \cite{Le68} says that $\G$ is amenable if and only if the Fourier algebra $A(\G) \cong L^1(\hG)$ has a bounded approximate identity, which by Theorem \ref{co-amen-characterization} implies that $\hG$ is co-amenable.  If $\G$ is compact, the answer is obviously yes as well, since $L^1(\hG)$ is unital.  If $\G$ is discrete, then the answer is again yes, but this is a highly non-trivial theorem of 
Tomatsu \cite{To06} (also proved independently by Blanchard-Vaes \cite{BlVa02}).          

\begin{thm}[Tomatsu \cite{To06}, Blanchard-Vaes \cite{BlVa02}] \label{disc-amen}
Let $\G$ be an amenable discrete quantum group.  Then $\hG$ is co-amenable.
\end{thm}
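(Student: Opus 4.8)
The plan is to prove co-amenability of $\hG$ by manufacturing a net of almost-invariant vectors for the fundamental unitary $\hat W$ and then invoking Theorem \ref{co-amen-characterization}. Since $\G$ is discrete, $\hG$ is compact, and the characterizations of Theorem \ref{co-amen-characterization} apply to $\hG$ with $\hat W$ in place of $W$ and $L^2(\hG) = L^2(\G)$ as the underlying Hilbert space. Concretely, by the equivalence $(5) \Leftrightarrow (1)$ of that theorem applied to $\hG$, it suffices to construct a net of unit vectors $(\xi_i)_i \subset L^2(\G)$ with $\|\hat W(\xi_i \otimes \eta) - \xi_i \otimes \eta\| \to 0$ for every $\eta \in L^2(\G)$. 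This is exactly the input that drove Corollary \ref{co-amen-to-amen}, so the entire task is to run that implication backwards: to produce such almost-invariant vectors out of a left-invariant mean on $L^\infty(\G) = \prod_{\pi \in \Irr(\hG)} \mc B(H_\pi)$.

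The first substantive step is to pass from the (finitely additive, weak$^*$) invariant mean to an asymptotically invariant net of honest normal states, i.e. a Reiter-type condition. Fix a left-invariant mean $m$, so that $m(x \star \omega) = \omega(1) m(x)$ for all $\omega \in L^1(\G)$. Approximating $m$ weak$^*$ by normal states $\mu \in L^1(\G)_+$ and applying the standard Day convexity argument (weak$^*$ and norm closures of a convex set of normal states coincide, worked one functional $\omega$ at a time and then simultaneously over finite sets), one obtains a net $(\mu_i)_i$ of normal states with $\|\omega \star \mu_i - \omega(1)\mu_i\| \to 0$ in $L^1(\G)$ for each $\omega \in L^1(\G)$. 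In view of the block decomposition of $L^\infty(\G)$, this is equivalently packaged as a quantum F\o lner condition on finite subsets of $\Irr(\hG)$, suitably weighted by the quantum dimensions $d(\pi)$.

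The main obstacle is the remaining step: converting $L^1$-almost-invariant states into $L^2$-almost-invariant vectors for $\hat W$ when $\G$ is not unimodular. In the unimodular (Kac) case this is routine: writing $\mu_i$ via a positive density and taking its square root produces a vector $\xi_i \in L^2(\G)$, and the Powers--St\o rmer inequality turns $L^1$-closeness into $L^2$-closeness, so that the Reiter condition transfers directly to $\|\hat W(\xi_i \otimes \eta) - \xi_i \otimes \eta\| \to 0$, using $W \in L^\infty(\G)\overline{\otimes}L^\infty(\hG)$ to relate the module action $\star$ to conjugation by $\hat W$. When $\G$ is non-unimodular the modular structure, encoded by the matrices $Q_\pi$ and the Woronowicz characters $(f_z)$, twists the identification between $L^1(\G)$ and $L^2(\G)$, and the naive square-root vectors no longer behave well under $\hat W$. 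Overcoming this twist---by choosing the vectors with the correct modular weighting and estimating the resulting error terms against the explicit weight formulas \eqref{eqn:weights}---is the genuine technical heart of the theorem, and is precisely where the arguments of Tomatsu and of Blanchard--Vaes do their real work. Once a net of $\hat W$-almost-invariant unit vectors is in hand, Theorem \ref{co-amen-characterization} immediately yields that $\hG$ is co-amenable.
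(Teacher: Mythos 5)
Your setup matches the paper's: both start from a left-invariant mean, run the Hahn--Banach/Day convexity argument to get a net of normal states $(\omega_j)_j$ with $\|\omega \star \omega_j - \omega(1)\omega_j\|_{L^1(\G)} \to 0$, and both represent these states by unit vectors in the positive cone of $L^2(\G)$ via the standard form. (The paper then concludes via the bounded co-unit, i.e.\ condition (4) of Theorem \ref{co-amen-characterization}, by showing $|\omega(1)| \le \|\lambda(\omega)\|$, rather than via almost-invariant vectors as you propose; these endpoints are equivalent and this difference is immaterial.)

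However, there is a genuine gap: your final paragraph identifies the hard step --- converting $L^1$-almost-invariance into $L^2$-almost-invariance in the non-unimodular case --- and then explicitly defers it to ``the arguments of Tomatsu and of Blanchard--Vaes,'' without supplying the mechanism. That step is the theorem; gesturing at ``the correct modular weighting'' and the weight formulas \eqref{eqn:weights} is a placeholder, not an argument. The paper's sketch does supply the mechanism, and it is not a direct estimate against the Haar weight formulas: one fixes $\pi \in \Irr(\hG)$, forms two positive normal functionals $\eta_j, \mu_j \in M_{n(\pi)}(L^\infty(\G))_*$ (one built from $\mathrm{Tr} \otimes \omega_j$, the other from the pairing $\sum_{m,n}\langle \omega^\pi_{m,n}, \omega_j \star x_{mn}\rangle$), notes that the Reiter property gives $\|\mu_j - \eta_j\| \to 0$, conjugates both by $Q_\pi \otimes 1$, identifies the unique implementing vectors in the positive cone of $S_2(H_\pi) \otimes L^2(\G)$ --- namely $(Q_\pi \otimes 1)[\lambda(\omega^\pi_{sr})\xi_j]_{rs}$ and $Q_\pi \otimes \xi_j$ --- and only then applies the Powers--St{\o}rmer inequality to the matrix-amplified algebra. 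The $Q_\pi$-conjugation is exactly what neutralizes the modular twist you flag as the obstruction; cancelling $Q_\pi$ afterwards yields $\|[\lambda(\omega^\pi_{sr})\xi_j]_{rs} - 1 \otimes \xi_j\| \to 0$, hence the desired $\|\lambda(\omega^\pi)\xi_j - \omega^\pi(1)\xi_j\| \to 0$. Your unimodular shortcut is precisely this argument with $Q_\pi = 1$, but without the matrix amplification and $Q_\pi$-twist the proof does not exist in the general case, so as written your proposal establishes only the Kac-type special case.
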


In what follows we give a very rough outline of the ideas behind this theorem.  Our arguments follow closely along the lines of the approach of Blanchard-Vaes \cite{BlVa02}.  In the proofs in \cite{BlVa02, To06}, the modular theory of the Haar weights on $L^\infty(\G)$ plays a major role.  For another, very recent proof of  Theorem \ref{disc-amen} that avoids the use of modular theory, we refer the reader to Crann \cite{Cr15}.
 
\begin{proof}[Rough Sketch of Theorem \ref{disc-amen}]
Let $m$ be a left-invariant mean on $L^\infty(\G)$. By a standard Hahn-Banach plus convexity argument, we can write $m$ as a weak$^\ast$ limit of a net of states $(\omega_j)_j \subset L^1(\G)$ such that \begin{align} \label{l1-inv}\|\omega \star \omega_j - \omega(1)\omega_j\|_{L^1(\G)} \to 0 \qquad (\omega \in L^1(\G)).\end{align}
Since $L^\infty(\G)$ is in standard position on $L^2(\G)$, there exists a unique unit vector $\xi_j$ in the positive cone $L^2(\G)^+$ such that $\omega_j = \omega_{\xi_j}$.  Our claim is that co-amenability of $\hG$ will follow if we can show that
\begin{align}\label{as-inv}
\lim_i \|\lambda(\omega)\xi_i - \omega(1)\xi_i\|_{L^2(\G)} = 0 \qquad (\omega \in L^1(\G)).
\end{align}
Indeed, if \eqref{as-inv} holds, then it  follows that the unital linear functional
\[\omega \mapsto \omega(1) \qquad (\omega \in L^1(\G))  \quad \text{satisfies} \quad |\omega(1)| \le \|\lambda(\omega)\|, 
\]
and therefore extends uniquely to a state $\hat \epsilon:C_0(\hG) \to \C$.  But then one readily checks that for $\omega$ belonging to the dense subspace $\{\omega \in L^1(\G): \omega \circ S \in L^1(\G)\}$,
\[
\omega((\hat \epsilon \otimes \iota )\hat W) = \hat \epsilon(\omega \otimes \iota)W^* = \hat \epsilon(\lambda(\omega \circ S)) = \omega(S(1)) = \omega(1).
\]
Thus, $(\hat \epsilon \otimes \iota)\hat W = 1$  and $\hG$ is therefore co-amenable.

To prove \eqref{as-inv}, by linearity and density, it suffices to fix $\pi \in \Irr(\hG)$ and prove \eqref{as-inv} for   $\omega^\pi \in \mc B(H_\pi)_{\ast} = M_{n(\pi)}(\C)_{\ast} \subset L^1(\G)$.  To this end, define  positive normal linear functionals $\eta_j, \mu_j \in M_{n(\pi)}(L^\infty(\G))_*$ by 
\[
\eta_j ([x_{mn}]) = \text{Tr} \otimes \omega_j([x_{mn}]) \quad \& \quad  \mu_j(([x_{mn}]) = \sum_{m,n} \langle \omega_{m,n}^\pi, \omega_j \star x_{mn} \rangle \qquad ([x_{mn}] \in M_{n(\pi)}(L^\infty(\G))),
\]
where $\omega_{ij}^\pi(e_{kl}^\pi) = \delta_{ik}\delta_{jl}$ is the dual basis for the fixed system of matrix units $(e^\pi_{ij})_{1 \le i,j \le n_\pi} \subseteq \mc B(H_\pi)$.  The positivity of $\eta_j$ can readily be verified based on the equation
\[ \sum_{m,n} \langle \omega_{m,n}^\pi, \omega_j \star x_{mn} \rangle =  \sum_{m,n,k} \langle x_{mn}\lambda(\omega_{kn}^\pi)\xi_j|\lambda(\omega_{km}^\pi)\xi_j \rangle.\]

Now, by \eqref{l1-inv}, we have $\|\mu_j - \eta_j\|_{M_{n_\pi}(L^\infty(\G))_*} \to 0$, and consequently we also have  \[\|(Q_\pi \otimes 1)\mu_j(Q_\pi \otimes 1) - (Q_\pi \otimes 1)\eta_j(Q_\pi \otimes 1)\|_{M_{n(\pi)}(L^\infty(\G))_*} \to 0,\]
where $Q_\pi$ is the modular matrix associated to $\pi$.
At this point we'd like to apply the Powers-St{\o}rmer inequality \cite{Ha75} to the above equation, which requires us to identify the unique elements in the tensor product positive cone $(S_2(H_\pi) \otimes L^2(\G))^+$, which implement $(Q_\pi \otimes 1)\mu_j(Q_\pi \otimes 1)$ and $(Q_\pi \otimes 1)\eta_j(Q_\pi \otimes 1)$, respectively.  After some, analysis, the two families of elements in this cone turn out to be
\[(Q_\pi \otimes 1)[\lambda(\omega_{sr}^\pi)\xi_j]_{rs} \quad \& \quad (Q_\pi \otimes \xi_j) \qquad (\text{respectively}). \]  From this, an application of Powers-St{\o}rmer gives
\begin{align*}
& \|(Q_\pi \otimes 1)\mu_j(Q_\pi \otimes 1) - (Q_\pi \otimes 1)\eta_j(Q_\pi \otimes 1)\|_{M_{n_\pi}(L^\infty(\G))_*} \to 0\\
\implies&\|(Q_\pi \otimes 1)[\lambda(\omega_{sr}^\pi)\xi_j]_{rs} -(Q_\pi \otimes \xi_j) \|_{(S_2(H_\pi) \otimes L^2(\G))} \to 0 \\
&\iff \|[\lambda(\omega_{sr}^\pi)\xi_j]_{rs} -1 \otimes \xi_j \|_{(S_2(H_\pi) \otimes L^2(\G))} \to 0
\end{align*}
Applying $\omega^\pi \otimes \iota$ to the last line, where $\omega^\pi \in B(H_\pi)^*$, we get 
\[
\|\lambda (\omega^\pi)\xi_j - \omega^\pi(1)\xi_j\| \to 0.
\]
\end{proof}

\subsection{The Kesten amenability criterion}
We shall close our discussion of amenability and co-amenability by presenting a very handy   criterion for establishing the (non-)amenability of a discrete quantum group, called the {\it Kesten amenability criterion}.   We then use this result to determine precisely when the duals of free orthogonal  quantum groups $O^+_F$ are amenable

Let $\G$ be a discrete quantum group with compact dual $\hG$, and let $U = [u_{ij}] \in M_n(\mc O(\hG))$ be a finite-dimensional representation of $\hG$.  Recall that the {\it character of $U$} is the element $\chi = \chi_U \in \mc O(\hG)$ given by 
\[
\chi = \sum_{i=1}^n u_{ii}\in \mc O(\hG).
\]
Note that that $\chi$ always satisfies the following norm inequalities \[\|\chi\|_{C(\hG)} \le \|\chi\|_{C^u(\hG)} = \hat \epsilon(\chi) = n, \] where $\hat \epsilon: C^u(\hG) \to \C$ is the universal co-unit (which is a character satisfying $\hat \epsilon(u_{ij}) = \delta_{ij}$).  Moreover, the same inequalities hold for the real and imaginary parts $\Re \chi$ and $\Im \chi$.
Recall that $\G$ is called {\it finitely generated} if $\hG$ is a {\it compact matrix quantum group}.  I.e., if there is a finite dimensional unitary representation $U$ whose matrix elements generate $\mc O(\hG)$ as a $\ast$-algebra.  We call such a representation $U$ a {\it fundamental representation} of $\hG$.   The following theorem is the Kesten amenability criterion, which we state in the finitely generated case for simplicity.  (We leave the obvious adaptation to the general case to the reader.)  
\begin{thm}[Banica \cite{Ba99}]
Let $\G$ be a finitely generated  discrete quantum group, let $U = [u_{ij}]_{ij = 1}^n$ be a fundamental (unitary) representation  of $\hG$, and let $\chi = \chi_U \in \mc O(\hG)$ be the character of $U$.  Let $\sigma \subset [-n,n]$ be the spectrum of $\Re\chi \subset C(\hG)$.
Then $\G$ is amenable if and only if $n \in \sigma$.
\end{thm}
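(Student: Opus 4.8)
The plan is to first reduce the statement to a criterion for co-amenability of the compact dual $\hG$, and then to translate that criterion into the spectral condition $n \in \sigma$.

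\emph{Reduction to co-amenability.} Since $\G$ is discrete, the results already established give the equivalence ``$\G$ amenable $\iff \hG$ co-amenable'': Corollary~\ref{co-amen-to-amen} yields the implication $\hG$ co-amenable $\Rightarrow \G$ amenable, while Theorem~\ref{disc-amen} (Tomatsu, Blanchard--Vaes) supplies the converse. Thus it suffices to prove that $\hG$ is co-amenable if and only if $n \in \sigma$, where $\sigma = \sigma_{C(\hG)}(\Re\chi)$. Throughout I will use the universal counit $\hat\epsilon$ on $C^u(\hG)$, a $\ast$-character with $\hat\epsilon(u_{ij}) = \delta_{ij}$, so that $\hat\epsilon(\Re\chi) = n$; and since each diagonal entry $u_{ii}$ is a corner of the unitary $U$ (note $\sum_k u_{ik}u_{ik}^* = 1$), we have $\|u_{ii}\| \le 1$.

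\emph{The easy direction.} If $\hG$ is co-amenable, then by Theorem~\ref{co-amen-characterization}(3) applied to $\hG$ the quotient map $\pi_u: C^u(\hG) \to C(\hG)$ is an isomorphism, so the spectrum of $\Re\chi$ is the same whether computed in $C^u(\hG)$ or in $C(\hG)$. As $\hat\epsilon$ is a character with $\hat\epsilon(\Re\chi - n) = 0$, the element $\Re\chi - n$ is non-invertible in $C^u(\hG)$, whence $n \in \sigma_{C^u(\hG)}(\Re\chi) = \sigma$.

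\emph{The hard direction.} Conversely, suppose $n \in \sigma$. Since $\Re\chi$ is self-adjoint with $\sigma \subseteq [-n,n]$, the value $n$ is the top endpoint of the spectrum, so there is a state $\phi$ on $C(\hG)$ with $\phi(\Re\chi) = n$ (extend the character ``evaluation at $n$'' on $C^*(1,\Re\chi)\cong C(\sigma)$ to a state by Hahn--Banach). Writing $\phi(\Re\chi) = \sum_i \Re\phi(u_{ii}) = n$ with each $\Re\phi(u_{ii}) \le \|u_{ii}\| \le 1$ forces $\phi(u_{ii}) = 1$ for every $i$. I then exploit the unitarity relations $\sum_k u_{ik}u_{ik}^* = 1$ and $\sum_k u_{ki}^*u_{ki} = 1$ together with the Cauchy--Schwarz inequality for $\phi$: from $|\phi(u_{ii})|^2 \le \phi(u_{ii}^*u_{ii}) \le 1$ (and its adjoint version) I obtain $\phi(u_{ii}^*u_{ii}) = \phi(u_{ii}u_{ii}^*) = 1$, and comparison with the two relations above forces all off-diagonal terms $\phi(u_{ik}u_{ik}^*)$ and $\phi(u_{ki}^*u_{ki})$ with $k \ne i$ to vanish. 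Passing to the GNS representation $(\pi_\phi, \xi_\phi)$ of $\phi$, these vanishings say precisely that $\pi_\phi(u_{ij})\xi_\phi = \delta_{ij}\xi_\phi$ and $\pi_\phi(u_{ij}^*)\xi_\phi = \delta_{ij}\xi_\phi$ for all $i,j$. Hence $\xi_\phi$ is a common eigenvector of all generators and their adjoints with exactly the eigenvalues prescribed by $\hat\epsilon$, so by multiplicativity of $\hat\epsilon$ one reads off $\pi_\phi(x)\xi_\phi = \hat\epsilon(x)\xi_\phi$, and therefore $\phi(x) = \hat\epsilon(x)$, for every $x$ in the dense Hopf $\ast$-algebra $\mc O(\hG)$. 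Thus the counit is bounded in the reduced norm; the resulting state $\phi$ on $C(\hG)$ satisfies $(\iota \otimes \phi)\hat\Delta = \iota$ on the generators, hence on all of $C(\hG)$ by density, so $\hG$ is co-amenable.

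The main obstacle is the hard direction, and within it the crucial point is that $n$ must be the \emph{top} endpoint of $\sigma$ rather than merely a spectral value of modulus $\|\Re\chi\|$: it is precisely the maximality of $\Re\phi(u_{ii})$ that pins $\phi(u_{ii})$ to $1$ and triggers the rigidity argument that identifies $\phi$ with the counit. One should also keep in mind that the whole reduction rests on the deep Theorem~\ref{disc-amen}.
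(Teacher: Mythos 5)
Your proof is correct, and its skeleton coincides with the paper's: both directions are routed through co-amenability of $\hG$, using Theorem \ref{disc-amen} for ``amenable $\Rightarrow$ co-amenable dual'' and Corollary \ref{co-amen-to-amen} for the converse, and the easy direction (a bounded counit on $C(\hG)$ sends $\Re\chi$ to $n$) is the same. Where you genuinely differ is in the execution of the hard direction. The paper argues with approximate eigenvectors: non-invertibility of the positive element $\sum_i (1-\Re u_{ii})$ yields unit vectors $\xi_k \in L^2(\hG)$ with $\langle (1-\Re u_{ii})\xi_k|\xi_k\rangle \to 0$; unitarity of $U$ then forces $u_{ij}\xi_k - \delta_{ij}\xi_k \to 0$ and $u_{ij}^*\xi_k - \delta_{ij}\xi_k \to 0$, and a linear/multiplicative extension with triangle-inequality bookkeeping gives $\|x\xi_k - \hat\epsilon(x)\xi_k\| \to 0$ for all $x \in \mc O(\hG)$, hence $|\hat\epsilon(x)| \le \|x\|_{C(\hG)}$. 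You instead take a single state $\phi$ on $C(\hG)$ attaining $\phi(\Re\chi) = n$ (legitimate, since $n$ is the top endpoint of $\sigma$) and run a Cauchy--Schwarz/GNS rigidity argument to obtain the exact identities $\pi_\phi(u_{ij})\xi_\phi = \delta_{ij}\xi_\phi = \pi_\phi(u_{ij}^*)\xi_\phi$, whence $\phi = \hat\epsilon$ on $\mc O(\hG)$. The two devices are equivalent in content --- your $\phi$ is exactly a weak$^*$ limit point of the paper's vector states $\omega_{\xi_k}$ --- but your version trades the paper's limiting estimates for exact algebraic identities, which is cleaner (no error propagation when multiplying approximate relations), while the paper's version exhibits almost-invariant vectors for the generators explicitly, which is closer in spirit to condition (5) of Theorem \ref{co-amen-characterization} and to the classical Kesten picture of approximate invariance.
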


\begin{rem}
When $\G = \Gamma$ is a classical  finitely generated discrete group with finite generating set $S$.  The above theorem reduces to the classical  Kesten amenability criterion \cite{Ke59}, which says that $\Gamma$ is amenable if and only if the Markov operator 
\[
T = \frac{1}{2}\sum_{\gamma \in S} \lambda(\gamma) + \lambda(\gamma)^* \in C^*_r(\Gamma) \quad \text{satisfies} \quad  |S| \in \sigma(T).  
\]
\end{rem}

\begin{proof}
If $\G$ is amenable, then $\hG$ is co-amenable and therefore there is a bounded co-unit $\hat \epsilon:C(\hG) \to \C$ which sends $\Re\chi$ to $n$.
Thus $n \in \sigma$.  

Conversely, suppose that $n \in \sigma$.  Put $a_i = 1 - \Re u_{ii} \in C(\hG)$, which is positive.  Then the positive operator $\sum_i a_i$ is not invertible. This means  that there is a sequence $(\xi_k)_k \subset L^2(\G)$ of unit vectors such that $\sum_i \langle a_i \xi_k| \xi_k\rangle \to 0$.  

As each term in the above sum is non-negative, we get 
\begin{align*} \langle a_i \xi_k| \xi_k\rangle \to 0  &\qquad (1 \le i \le n) \\
\iff \|u_{ii}\xi_k - \xi_k\| \to 0 &\qquad (1 \le i \le n), 
\end{align*} where the second line follows from the equality 
\[
\|(1-u_{ii})\xi_k \|^2 =  2 \langle (1-\Re u_{ii})\xi_k|\xi_k\rangle + \|u_{ii}\xi\|^2 -1.
\]

Now let $M_n(C(\hG))$ act on $\C^n\otimes L^2(\G)$ in the canonical way.  As $U$ is unitary, 
\[1 = \| U(e_j \otimes \xi_k)\|^2 = \|\sum_i e_i \otimes u_{ij}\xi_k\|^2 =  \sum_i \|u_{ij}\xi_k\|^2 \qquad (1 \le j \le n),\]
and as $\|u_{ii}\xi_k -\xi_k \| \to 0$ we get $\|u_{ij}\xi_k\| \to 0$ if $i \ne j$.
In summary, this means that \[ u_{ij}\xi_k-\delta_{ij}\xi_k \to 0 \qquad (1 \le i, j \le n).\]  Multiplying the above limit by $u_{il}^*$ and summing over $i$, we also obtain 
\[
\sum_{i}( u_{il}^*u_{ij}\xi_k-\delta_{ij}u_{il}^*\xi_k) = \delta_{lj}\xi_k - u_{jl}^*\xi_k \to 0 \qquad (1 \le l,j \le n).
\]  

Extending linearly and multiplicatively (using the triangle inequality), we get from the above two limits that 
\[
\|x \xi_k - \hat\epsilon(x)\xi_k\| \to 0 \qquad (x \in \mc O(\hG)).
\]
In particular $|\hat \epsilon(x)|  = \lim_k\|x\xi_k\|_{L^2(\G)} \le \|x\|_{C_0(\hG)}$.  I.e., $C_0(\hG)$ has a bounded co-unit, and therefore $\G$ is amenable.
\end{proof}

As an application of the Kesten amenability criterion, we determine when the free orthogonal quantum groups $O^+_F$ are co-amenable.

\begin{thm}[Banica \cite{Ba96}]
Let $n \ge 2$ and $F \in \text{GL}_n(\C)$ be such that $F\bar F = \pm 1$.  Then $O^+_F$ is co-amenable if and only if $n = 2$.  In particular, Woronowicz' $SU_q(2)$ is co-amenable for all $q \in [-1,1] \backslash \{ 0\}$.
\end{thm}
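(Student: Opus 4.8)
The plan is to reduce the statement, via the duality results already established, to the Kesten amenability criterion applied to the discrete dual $\G := \widehat{O^+_F}$, and then to pin down the relevant spectrum by recognizing the fundamental character as a semicircular element. Since $O^+_F$ is compact, $\G$ is a finitely generated discrete quantum group whose fundamental representation is $U = [u_{ij}]$. Corollary \ref{co-amen-to-amen} together with Theorem \ref{disc-amen} shows that $O^+_F$ is co-amenable if and only if $\G$ is amenable, and the Kesten criterion (the preceding theorem) says this happens precisely when $n \in \sigma(\Re\chi)$, where $\chi = \sum_{i=1}^n u_{ii} \in \mc O(O^+_F)$ and the spectrum is computed in $C(O^+_F)$. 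The defining relation $U = F\bar U F^{-1}$ forces $U \cong \bar U$, so taking traces gives $\chi^* = \chi$; hence $\Re\chi = \chi$, and everything comes down to deciding whether $n$ lies in $\sigma(\chi)$.

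Next I would import the representation theory of $O^+_F$ due to Banica \cite{Ba96}: for every $F$ with $F\bar F = \pm 1$ and every $n \ge 2$, the irreducible representations are labelled by $k \in \N \cup \{0\}$, with $U_0$ trivial, $U_1 = U$, and the $SU(2)$-type fusion rule $U_1 \otimes U_k \cong U_{k+1} \oplus U_{k-1}$ (reading $U_{-1} = 0$). Writing $\chi_k$ for the (self-adjoint) character of $U_k$, this translates into the recursion $\chi\,\chi_k = \chi_{k+1} + \chi_{k-1}$, while the Schur orthogonality relations for the Haar state $h$ give $h(\chi_k \chi_l) = \delta_{kl}$. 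The crucial point is that this combinatorial data is \emph{identical} for every admissible $F$ and every $n \ge 2$.

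The heart of the argument is the computation of the $\ast$-distribution of $\chi$ with respect to $h$. In the orthonormal system $\{\chi_k\}_{k \ge 0} \subset L^2(O^+_F)$, multiplication by $\chi$ acts as the Jacobi operator of the half-line graph $\N \cup \{0\}$ (zero diagonal, unit off-diagonals), so that
\[
h(\chi^m) = \langle \chi^m \chi_0 \,|\, \chi_0\rangle = \#\{\text{length-}m \text{ walks on } \N\cup\{0\} \text{ from } 0 \text{ to } 0\},
\]
which vanishes for odd $m$ and equals the Catalan number $C_{m/2}$ for even $m$. Thus $\chi$ is a standard semicircular element, and its distribution $\mu$ with respect to $h$ is the unique probability measure with these moments, namely the semicircle law on $[-2,2]$. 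Since $h$ is faithful on the reduced algebra $C(O^+_F)$, it is faithful on the commutative C$^\ast$-subalgebra $C^*(\chi) \cong C(\sigma(\chi))$, which forces $\sigma(\chi) = \supp\mu = [-2,2]$.

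Finally I would conclude: under the standing hypothesis $n \ge 2$, the condition $n \in \sigma(\chi) = [-2,2]$ holds if and only if $n = 2$. Hence $O^+_F$ is co-amenable exactly when $n = 2$; and since $SU_q(2) = O^+_{F_q}$ has $n = 2$ for every $q \in [-1,1]\setminus\{0\}$, it is co-amenable for all such $q$. The main obstacle is the middle step: justifying that $\chi$ is semicircular (which rests entirely on Banica's fusion rules and the orthogonality relations) and, just as importantly, upgrading the statement about the $L^2$-distribution of $\chi$ to a statement about its genuine C$^\ast$-spectrum in the reduced algebra --- this is precisely where faithfulness of the Haar state on $C(O^+_F)$ is indispensable.
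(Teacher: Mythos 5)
Your proof is correct, and its skeleton --- reduce co-amenability of $O^+_F$ to amenability of the discrete dual via Corollary \ref{co-amen-to-amen} and Theorem \ref{disc-amen}, apply the Kesten criterion, note $\chi^* = \chi$, and identify the spectrum of $\chi$ as $[-2,2]$ by computing its distribution with respect to the Haar state --- is exactly the paper's. Where you genuinely differ is the computation of the moments. The paper uses the identity $\varphi(\chi^k) = \text{Tr}\big((\varphi\otimes\iota)U^{\otimes k}\big) = \dim \Mor(1, U^{\otimes k})$ and quotes Banica's Temperley--Lieb computation that these dimensions vanish for odd $k$ and equal the Catalan numbers for even $k$. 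You instead invoke Banica's classification of irreducibles and the $SU(2)$-type fusion rules (Theorem \ref{banica-result} in the paper), which together with character orthogonality realize multiplication by $\chi$ on the orthonormal system $(\chi_k)_{k\ge 0}$ as the adjacency operator of the half-line, so that the moments count Dyck paths. Both inputs come from \cite{Ba96}, but they are logically distinct: in Banica's development the intertwiner-dimension count comes first and the fusion rules are derived from it, so the paper's hypothesis is the more primitive one, while yours presupposes the full representation theory. What your route buys is transparency: semicircularity becomes a visible Jacobi-operator statement rather than a quoted dimension count, and you make explicit the faithfulness of the Haar state on the reduced algebra $C(O^+_F)$, which is precisely what is needed to upgrade the $L^2$-distribution of $\chi$ to a statement about its C$^\ast$-spectrum --- a point the paper's sketch uses silently when it asserts that the support of the spectral measure $\mu$ is $\sigma$.
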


\begin{proof}[Sketch]  Consider the character $\chi = \chi_U$ of the fundamental representation $U$.  Since by definition, we have $\bar U = F^{-1}UF$, it follows from the invariance of characters under equivalence of representations that 
\[
\chi^* = \chi_{\bar U} = \chi_{F^{-1}UF} = \chi.
\]
Thus, to determine when $O^+_F$ is co-amenable, we can try to determine the spectrum $\sigma \subset [-n,n]$ of $\chi \in C_0(O^+_F)$.  To do this, we will identify the spectral measure $\mu$ of $\chi$ with respect to the Haar state $\varphi$ (whose support is $\sigma$), and this can be done by computing the moments of $\mu$.  But in this case, we have
\[
\int_\sigma t^k d\mu = \varphi(\chi^k) = (\varphi \otimes \text{Tr})(U^{\otimes k}) = 
\text{Tr}((\varphi \otimes \iota)U^{\otimes k}) = \dim(\text{Mor}(1, U^{\otimes k})), 
\]
since $(\varphi \otimes \iota)U^{\otimes k}$ is the orthogonal projection onto the space of fixed vectors $\text{Hom}(1, U^{\otimes k})$ of the tensor product representation $U^{\otimes k}$.  

In \cite{Ba96}, Banica showed that for all $n \ge 2$, $\text{Mor}(1, U^{\otimes k}) = \{0\}$ if $k$ is odd, and 
\[
\dim(\text{Mor}(1, U^{\otimes 2m})) = C_{m} = \frac{1}{m +1} {2m\choose m} \qquad (\text{the $m$th Catalan number, } m \in \N).
\]
The  Catalan numbers are the well-known moment sequence for the semicircular measure $\mu$ on the interval $[-2,2]$ (see for example \cite{NiSp06}):
\[
d\mu(t) = \frac{1}{2\pi}\sqrt{4-t^2}1_{[-2,2]}dt. 
\]
Thus $\sigma = [-2,2]$ and therefore $n \in \sigma \iff n = 2$.
\end{proof}

\section{Weakening the notion of amenability} \label{fourieralg}
We now consider various relaxations of the  notion of amenability and co-amenability for locally compact quantum groups.  In fact, in this survey, we will primarily focus on two such notions, namely {\it the Haagerup property} and {\it weak amenability} for locally compact quantum groups.  The primary goal here will be to investigate the interplay between these approximation properties and the structure of certain operator algebras associated to a locally compact quantum group.  

\subsection{Motivation - Amenability and approximate units in the Fourier algebra}

Let $G$ be a locally compact group and let $\lambda_G:G \to \mc U(L^2(G))$ denote the left regular representation.  The {\it Fourier algebra} is the subspace of $A(G) \subset C_0(G)$ consisting of coefficient functions of $\lambda_G$. I.e.,
\[A(G) = \Big\{t \mapsto \phi_{\xi,\eta}(t) =  \la \lambda_G(t^{-1}) \xi|\eta\ra : \xi, \eta \in L^2(G) \Big\}.\]  The Fourier algebra $A(G)$ turns out to be a Banach algebra under the pointwise operations inherited from $C_0(G)$, when it is equipped with the norm
\[
\|\phi\|_{A(G)} = \inf\{\|\xi\|_2\|\eta\|_2: \phi = \phi_{\xi,\eta}\}.  
\]
See \cite{Ey64} for details.  

Since $A(G)$ is a nice commutative Banach algebra that can be associated to any locally compact group $G$, it is natural to ask what types of conditions on $G$ ensure that $A(G)$ has a bounded approximate identity (BAI)?  It turns out that the existence of such a BAI is equivalent to the amenability of $G$.  

\begin{thm}[Leptin \cite{Le68}]
A locally compact group $G$ is amenable if and only if the Banach algebra $A(G)$ has a bounded approximate identity.  

Moreover, if $G$ is amenable, we can always take the BAI $(b_i)_i\subset A(G)$ to consist of normalized {\it positive definite functions} associated to $\lambda_G$.  (I.e., for each $i$, $b_i = \phi_{\xi_i,\xi_i}$ for some unit vector $\xi_i \in L^2(G)$.)
\end{thm}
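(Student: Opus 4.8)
The plan is to prove the two implications separately, dispatching the reverse implication with the machinery already in place and reserving a direct, hands-on argument for the forward implication, which is the real content (it is precisely the converse of Corollary \ref{co-amen-to-amen}, open in general but true here).

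For the reverse direction, I would recall that a classical group $G$ is a special case of a locally compact quantum group, with $A(G) \cong L^1(\hG)$ carrying the convolution product dual to the coproduct on $L^\infty(\hG) = \mc L(G)$. Hence if $A(G)$ has a bounded approximate identity, so does $L^1(\hG)$, and the equivalence $(6) \Rightarrow (1)$ of Theorem \ref{co-amen-characterization} shows that $\hG$ is co-amenable; Corollary \ref{co-amen-to-amen} then gives that $\G = G$ is amenable. No further work is needed here. For the forward direction I would build the net by hand. I would first invoke a standard reformulation of amenability (Reiter's property, equivalently weak containment of the trivial representation in $\lambda_G$): there is a net of nonnegative unit vectors $\xi_i \in L^2(G)$ with $\|\lambda_G(g)\xi_i - \xi_i\|_2 \to 0$ uniformly for $g$ in compact subsets of $G$. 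Setting $b_i = \phi_{\xi_i,\xi_i}$ produces normalized positive-definite coefficient functions of $\lambda_G$ lying in the closed unit ball of $A(G)$ with $b_i(e) = 1$, which are exactly the candidate BAI of the ``moreover'' clause.

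The heart of the proof is to show $\|b_i\phi - \phi\|_{A(G)} \to 0$ for every $\phi \in A(G)$. Since $(b_i)$ is bounded in $A(G)$ and $A(G)$ is commutative, a standard $3\epsilon$-reduction lets me assume $\phi = \phi_{\eta,\zeta}$ with $\eta,\zeta \in C_c(G)$, a dense family. The key algebraic observation is that the pointwise product $b_i\phi_{\eta,\zeta}$ is the coefficient function of $\lambda_G \otimes \lambda_G$ at $\eta \otimes \xi_i$ and $\zeta \otimes \xi_i$, while $\phi_{\eta,\zeta}$ itself (using $\|\xi_i\|_2 = 1$) is the coefficient of $\lambda_G \otimes 1$ at the same vectors. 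Applying Fell's absorption unitary $V$ on $L^2(G) \otimes L^2(G)$ implementing $\lambda_G \otimes \lambda_G \cong \lambda_G \otimes 1$ — concretely $(V\kappa)(s,t) = \kappa(s,st)$ — rewrites $b_i\phi - \phi$ as a difference of two coefficient functions of $\lambda_G \otimes 1$, and splitting that difference across the two legs yields
\[
\|b_i\phi - \phi\|_{A(G)} \le \|V(\eta \otimes \xi_i) - \eta \otimes \xi_i\|_2\,\|\zeta\|_2 + \|\eta\|_2\,\|V(\zeta \otimes \xi_i) - \zeta \otimes \xi_i\|_2,
\]
where I use that coefficients of $\lambda_G \otimes 1$ at vectors $A,B$ lie in $A(G)$ with norm at most $\|A\|_2\|B\|_2$.

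The main obstacle is the final convergence of the right-hand side. Unwinding the explicit form of $V$ gives the identity
\[
\|V(\eta \otimes \xi_i) - \eta \otimes \xi_i\|_2^2 = \int_G |\eta(s)|^2\,\|\lambda_G(s^{-1})\xi_i - \xi_i\|_2^2\, ds,
\]
which shows exactly why the hypotheses match: the compact support of $\eta$ confines the integral to a fixed compact set, on which the merely locally-uniform almost-invariance of $\xi_i$ (together with the uniform bound $\|\lambda_G(s^{-1})\xi_i - \xi_i\|_2 \le 2$ and dominated convergence) forces the integral to $0$; the same holds for $\zeta$. Pinning down the explicit Fell unitary, verifying the intertwining and the coefficient-norm estimate, and carrying out this measure-theoretic bookkeeping is the delicate part — everything else is formal manipulation of coefficient functions.
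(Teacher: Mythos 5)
Your proposal is correct; note, however, that the paper offers no proof of this theorem at all---it is stated with a citation to Leptin---so there is no internal argument to compare against. What you have done is supply the standard classical proof for the forward direction and, for the backward direction, reassemble machinery the paper does prove. That backward argument is legitimate and non-circular: the isometric isomorphism $A(G)\cong L^1(\widehat{G})$ is established in the paper, the implication (BAI for $L^1$) $\Rightarrow$ (co-amenability) is part of Theorem \ref{co-amen-characterization}, and Corollary \ref{co-amen-to-amen} then yields amenability of $G$; both results are proved in the paper independently of Leptin's theorem. (Indeed, the paper's own discussion following Leptin's theorem runs this chain in the opposite direction, using Leptin to deduce co-amenability of $\widehat{G}$; your observation is that the quantum machinery makes half of Leptin's theorem free.) Your forward direction checks out in detail: the unitary $V$ with $(V\kappa)(s,t)=\kappa(s,st)$ is isometric by left invariance of Haar measure and satisfies $V(\lambda_G(g)\otimes\lambda_G(g))=(\lambda_G(g)\otimes 1)V$, so $b_i\phi_{\eta,\zeta}$ and $\phi_{\eta,\zeta}$ are both coefficients of $\lambda_G\otimes 1$, at the vector pairs $\bigl(V(\eta\otimes\xi_i),V(\zeta\otimes\xi_i)\bigr)$ and $\bigl(\eta\otimes\xi_i,\zeta\otimes\xi_i\bigr)$ respectively; the claim that such coefficients lie in $A(G)$ with norm at most the product of the $L^2$-norms follows by expanding in an orthonormal basis of the second leg and applying Cauchy--Schwarz; and your integral identity for $\|V(\eta\otimes\xi_i)-\eta\otimes\xi_i\|_2^2$ is exactly right, so uniform almost-invariance of $\xi_i$ over the compact support of $\eta$ forces it to zero. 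The only ingredients imported from outside the paper are Reiter's property $(P_2)$ as an equivalent of amenability and the density reduction (which needs the uniform bound $\|b_i\|_{A(G)}\le 1$, which you have); both are standard and in the spirit of the ``standard Hahn--Banach plus convexity'' appeals the paper itself makes elsewhere.
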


In other words, if $G$ is not amenable, then the Banach algebra $A(G)$ has no BAI.  However, it could still happen that $A(G)$ has an {\it unbounded} approximate identity $(b_i)_i$ that is  uniformly bounded in some other (smaller) norm on $A(G)$. For example, we could ask for the uniform boundedness of $(b_i)_i$ in the multiplier norm on $A(G)$:
\[
\|b_i\|_{MA(G)} = \sup_{\|a\|_{A(G)} = 1}\|b_ia\|_{A(G)}.
\]
Another scenario one could consider is a non-amenable group $G$ for which there exists a net $(b_i)_i$ of normalized positive definite functions contained in the slightly larger space $C_0(G)$ (instead of $A(G)$) for which a BAI-type condition still holds for $A(G)$.  Namely, 
\[
\|b_ia-a\|_{A(G)} \to 0 \qquad (a \in A(G)).
\]  
It was discovered by Haagerup \cite{Ha78} that the above two scenarios actually hold for the non-amenable free groups $\F_k$ on $k \ge 2$ generators.  With this seminal work of Haagerup, the notion of {\it weak amenability} and {\it the Haagerup property} for groups was born.   Our goal now is to study these approximation properties in some detail in the general context of locally compact quantum groups. 

\subsection{The Fourier algebra of a locally compact quantum group}

We begin by defining the quantum group analogue of the Fourier algebra.  To do this, we consider once again the classical situation:  Let $G$ be a locally compact group and let  
$W \in \mc B(L^2(G) \otimes L^2(G))$ be the associated multiplicative unitary which describes the lcqg structure associated to $G$.  It turns out that $W$ is given by 
\[
W\xi(s,t) = \xi (s,s^{-1}t) \qquad (\xi \in L^2(G \times G) = L^2(G) \otimes L^2(G)),  
\]
and from this it follows that the multiplicative unitary $\hat W  =\Sigma W^*\Sigma$ associated to the Pontryagin dual $\hat G$ satisfies  
\[
\hat W \xi(s,t) = \xi(ts,t) \qquad (\xi \in L^2(G \times G)).
\]
In particular, the dual left regular representation $\hat \lambda:L^1(\hat G) \to C_0(G)$ is given by $\hat \lambda(\omega) = (\omega \otimes \iota)\hat W$.  The following proposition shows that $A(G)$ is nothing but the image of $L^1(\hat G)$  under $\hat \lambda$.

\begin{prop}
For a locally compact group $G$, we have that
\[
\hat \lambda: L^1(\hat G) \to A(G)
\]
is an isometric isomorphism of Banach algebras.
\end{prop}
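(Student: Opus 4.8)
The plan is to recognize this statement as the quantum-group incarnation of Eymard's classical identification $A(G) \cong \mc L(G)_*$, and to verify that the abstract map $\hat\lambda$ implements precisely that identification. Recall that in the classical setting $L^\infty(\hat G) = \mc L(G)$, the group von Neumann algebra, so that $L^1(\hat G) = \mc L(G)_*$ is the space of normal functionals on $\mc L(G)$, and $\mc L(G)$ acts in standard form on $L^2(G)$ through $\lambda_G$.

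First I would compute the function realizing $\hat\lambda(\omega)$ for a vector functional $\omega = \omega_{\xi,\eta}$. Using the explicit formula $\hat W\zeta(s,t) = \zeta(ts,t)$ together with the left-invariance of Haar measure, I would show that for all $\zeta, \zeta' \in L^2(G)$,
\[
\la \hat W(\xi \otimes \zeta)\,|\,\eta \otimes \zeta'\ra = \int_G \phi_{\xi,\eta}(t)\,\zeta(t)\,\overline{\zeta'(t)}\,dt,
\]
where $\phi_{\xi,\eta}(t) = \la \lambda_G(t^{-1})\xi \,|\, \eta\ra$; the point of the change of variables is that the inner integral $\int_G \xi(ts)\overline{\eta(s)}\,ds$ equals $\la \lambda_G(t^{-1})\xi\,|\,\eta\ra$. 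In other words, $\hat\lambda(\omega_{\xi,\eta}) = (\omega_{\xi,\eta}\otimes\iota)\hat W$ is exactly the operator of pointwise multiplication by the coefficient function $\phi_{\xi,\eta}$, under the canonical identification of $C_0(G) = C_0(\widehat{\hat G})$ with multiplication operators on $L^2(G)$. This single computation already shows that $\hat\lambda$ lands in $A(G)$ and sends $\omega_{\xi,\eta}$ to $\phi_{\xi,\eta}$.

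Next I would establish bijectivity and isometry. Since $\mc L(G)$ is in standard form on $L^2(G)$, every normal functional $\omega \in L^1(\hat G)$ is of the form $\omega_{\xi,\eta}|_{\mc L(G)}$, and dually every element of $A(G)$ is some $\phi_{\xi,\eta}$; so $\hat\lambda$ is onto by the previous step, and injective either by the general fact (recalled earlier) that $\hat\lambda$ is injective, or because $\omega$ is determined by its values $\omega(\lambda_G(t)) = \phi_{\xi,\eta}(t)$ on the weak-$\ast$-total set $\{\lambda_G(t) : t \in G\}$. For the isometry I would compare the two infimum norms directly: the key observation is that $\omega_{\xi,\eta}|_{\mc L(G)} = \omega_{\xi',\eta'}|_{\mc L(G)}$ if and only if $\phi_{\xi,\eta} = \phi_{\xi',\eta'}$ as functions, since both conditions are equivalent to agreement on every $\lambda_G(t)$. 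Hence the defining infima $\|\omega\|_{L^1(\hat G)} = \inf\{\|\xi\|_2\|\eta\|_2\}$ and $\|\phi\|_{A(G)} = \inf\{\|\xi\|_2\|\eta\|_2\}$ run over identical families of representatives, forcing $\|\hat\lambda(\omega)\|_{A(G)} = \|\omega\|_{L^1(\hat G)}$.

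Finally, multiplicativity is essentially free: the general theory recalled earlier shows that $\hat\lambda$ is a homomorphism from $(L^1(\hat G), \star)$ into $\mc B(L^2(G))$, and since its image consists of multiplication operators, where $M_f M_g = M_{fg}$, the convolution product $\star$ is transported to pointwise multiplication, which is exactly the product on $A(G)$. The main obstacle, and the only place demanding genuine care, is the isometry step: it relies on $\mc L(G)$ being in standard form, so that every normal functional is a vector functional with norm attained as an infimum of products $\|\xi\|_2\|\eta\|_2$, together with the clean matching of the two sets of representatives. By contrast, the opening computation is routine but bookkeeping-heavy, and the multiplicativity is purely formal given the earlier results.
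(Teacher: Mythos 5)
Your proposal is correct and follows essentially the same route as the paper: the same explicit computation with $\hat W$ showing that $\hat\lambda(\omega_{\xi,\eta})$ acts as multiplication by $\phi_{\xi,\eta}$, followed by the same isometry argument that combines the standard form of $\mc L(G)$ on $L^2(G)$ with injectivity of $\hat\lambda$ to match the two infima of $\|\xi\|_2\|\eta\|_2$. The only difference is cosmetic: you spell out surjectivity and multiplicativity (and note the harmless relabeling $\omega(\lambda_G(t)) = \phi_{\xi,\eta}(t^{-1})$), steps the paper leaves implicit as consequences of the general theory of the left regular representation.
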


\begin{proof}
Fix $\omega \in L^1(\hat G)$.  Since $L^\infty(\hat G)$ is in standard position on $L^2(G)$, we can find a pair of vectors $\xi,\eta \in L^2(G)$ such that $\omega = \omega_{\xi,\eta}$ on $L^\infty(\hat G)$ and $\|\omega\|_{L^1(\hat G)} =  \|\xi\|\|\eta\|$.   Now, for $\alpha,\beta \in L^2(G)$, we compute
\begin{align*}
\la \hat \lambda (\omega)\alpha|\beta \ra &= \la \hat W (\xi \otimes \alpha)|(\eta \otimes \beta)\ra \\
&= \int_G \int_G \xi(ts)\alpha(t) \overline{\eta(s)\beta(t)} dsdt \\
&=\int_G \la \lambda(t^{-1})\xi |\eta\ra \alpha(t) \overline{\beta(t)}dt.
\end{align*}
Hence,  $\hat \lambda(\omega_{\xi,\eta}) = \phi_{\xi,\eta} \in A(G)$ and $\|\phi_{\xi,\eta}\|_{A(G)} \le \|\xi\|\|\eta\|$.  Taking the infimum over all $\xi, \eta$ above (noting that the homomorphism $\hat \lambda$ is injective), we get $\|\hat \lambda(\omega)\|_{A(G)} = \|\omega\|_{L^1(\hat G)}$.
\end{proof}

We now turn to the quantum setting and define the Fourier algebra and Fourier multipliers.   

\begin{defn}
For a locally compact quantum group $\G$, the  {\it Fourier algebra} is the subalgebra \[A(\G) := \hat\lambda(L^1(\hG)) \subseteq C_0(\G).\]  For $\omega \in L^1(\hG)$, we write $\|\hat\lambda(\hat\omega)\|_{A(\G)} = \|\hat \omega\|_{L^1(\hG)}$, thus making $A(\G)$ a Banach algebra with respect to the norm $\|\cdot\|_{A(\G)}$.
\end{defn}

\begin{defn}An element $a \in L^\infty(\G)$ is called a {\it (left) Fourier multiplier} if $a A(\G) \subseteq A(\G)$.
\end{defn} 
Note that by the closed graph theorem,  any Fourier multiplier $a \in L^\infty(\G)$ induces a bounded linear map \[m_a:A(\G) \to A(\G); \qquad m_a(b) = ab \qquad (b\in A(\G)).  \] Equivalently, $a$ induces a bounded linear map $L_{*}:L^1(\hG) \to L^1(\hG)$ such that 
\begin{align} \label{eqn:left-mult}
L_*(\omega_1\star \omega_2) = L_*(\omega_1)\star\omega_2 \qquad (\omega_i \in L^1(\hG)),
\end{align}
where $L_*(\omega) = \hat\lambda^{-1}(m_a(\hat \lambda (\omega))).$
In what follows, we will frequently identify a Fourier multiplier $a$ with the corresponding bounded map $m_a$.  We write $MA(\G)$ for the Banach algebra of all Fourier multipliers, and let $M_{cb}A(\G) := MA(\G) \cap \mc {CB}(A(\G))$ be the Banach space of {\it completely bounded (cb) Fourier multipliers}.  Note that $M_{cb}A(\G)$ consists of  precisely those $a \in MA(\G)$ for which the corresponding $\sigma$-weakly continuous map \[L := (L_*)^*:L^\infty(\hG) \to L^\infty(\hG)\] is completely bounded on $L^\infty(\hG)$.  Finally, we call $a \in M_{cb}A(\G)$ a {\it  completely positive multiplier} (or a continuous {\it completely positive definite function} on $\hG$) if the corresponding map $L$ is completely positive.

The following proposition gives some useful equivalent characterizations of those $L \in \mc {CB}_\sigma(L^\infty(\G))$ that come from cb multipliers.

\begin{prop} \label{M_cb-characterization}
Let $L \in \mc {CB}_\sigma(L^\infty(\hG))$ with pre-adjoint $L_* \in \mc {CB}(L^1(\hG))$.  Then the following conditions are equivalent:
\begin{enumerate}
\item $L_*$ satisfies  \eqref{eqn:left-mult}. 
\item $\hat \Delta \circ L = (L \otimes \iota) \circ \hat \Delta$. 
\item There exists a unique $a \in L^\infty(\G)$ such that $(L \otimes \iota) (\hat W) = (1 \otimes a)(\hat W)$.
\item There exists a unique $a \in M_{cb} A(\G)$ such that $a \hat \lambda(\hat\omega) = \hat\lambda(L_*\hat \omega)$  for all $\hat \omega \in L^1(\hG)$.
\end{enumerate}
Moreover, we have $M_{cb}A(\G) \subseteq M(C_0(\G))$, 
\end{prop}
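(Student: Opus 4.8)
The plan is to prove the cycle $(1)\Rightarrow(2)\Rightarrow(3)\Rightarrow(4)\Rightarrow(1)$, obtaining the reverse implication $(2)\Rightarrow(1)$ for free from the first step, and then to read off $M_{cb}A(\G)\subseteq M(C_0(\G))$ from the concrete description furnished by (3) and (4). I would begin with $(1)\Leftrightarrow(2)$ by a pure duality argument. As $L\in\mc{CB}_\sigma(L^\infty(\hG))$ is normal it has a pre-adjoint $L_*$, and the pre-adjoint of $\hat\Delta$ is precisely the convolution product $\star$ on $L^1(\hG)$. Taking pre-adjoints of the operator identity in (2) gives $(\hat\Delta\circ L)_*=L_*\circ\star$ on the one hand and $((L\otimes\iota)\circ\hat\Delta)_*=\star\circ(L_*\otimes\iota)$ on the other, so (2) holds if and only if $L_*(\omega_1\star\omega_2)=(L_*\omega_1)\star\omega_2$, which is \eqref{eqn:left-mult}. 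Thus (1) and (2) are equivalent in both directions at once.

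The substance of the proof is $(2)\Rightarrow(3)$. I would set $Y:=(L\otimes\iota)\hat W\in L^\infty(\hG)\,\overline{\otimes}\,L^\infty(\G)$ and use the fundamental relation $(\hat\Delta\otimes\iota)\hat W=\hat W_{13}\hat W_{23}$ for the multiplicative unitary of $\hG$. Applying $L$ to the first leg and using (2) to move $L$ through $\hat\Delta$ produces $(\hat\Delta\otimes\iota)Y=Y_{13}\hat W_{23}$; multiplying on the right by $(\hat\Delta\otimes\iota)\hat W^*=\hat W_{23}^{*}\hat W_{13}^{*}$ then cancels the $23$--legs and yields $(\hat\Delta\otimes\iota)(Y\hat W^*)=(Y\hat W^*)_{13}$. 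Slicing the $\G$--leg, each $z=(\iota\otimes\theta)(Y\hat W^*)\in L^\infty(\hG)$ obeys $\hat\Delta(z)=z\otimes 1$, and the standard fact that $\{z\in L^\infty(\hG):\hat\Delta(z)=z\otimes 1\}=\C1$ forces $Y\hat W^*=1\otimes a$ for a unique $a\in L^\infty(\G)$. Since $\hat W$ is unitary this is exactly $(L\otimes\iota)\hat W=(1\otimes a)\hat W$, giving (3) together with the uniqueness of $a$.

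For $(3)\Rightarrow(4)$ I would slice the first leg of $(1\otimes a)\hat W=(L\otimes\iota)\hat W$ against $\hat\omega\in L^1(\hG)$, using $\hat\lambda(\hat\omega)=(\hat\omega\otimes\iota)\hat W$ and $(\hat\omega\otimes\iota)(L\otimes\iota)=(L_*\hat\omega)\otimes\iota$, to get $a\hat\lambda(\hat\omega)=\hat\lambda(L_*\hat\omega)$; complete boundedness of $L$ then places $a$ in $M_{cb}A(\G)$. For $(4)\Rightarrow(1)$ I would use that $\hat\lambda$ is an algebra homomorphism from $(L^1(\hG),\star)$ onto $A(\G)$, so that left multiplication by $a$ being a right--module endomorphism translates verbatim into \eqref{eqn:left-mult} for $L_*=\hat\lambda^{-1}m_a\hat\lambda$. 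Finally, for the moreover clause, $a\hat\lambda(\hat\omega)=\hat\lambda(L_*\hat\omega)\in A(\G)$ shows $aA(\G)\subseteq A(\G)$; since $A(\G)$ is norm--dense in $C_0(\G)$ and $a$ acts as a bounded multiplication operator on $L^2(\G)$, a limiting argument gives $aC_0(\G)\subseteq C_0(\G)$, and the symmetric argument applied to the adjoint multiplier $\hat x\mapsto L(\hat x^*)^*$, which again satisfies (2) and produces $a^{*}$ on the left, yields $C_0(\G)a\subseteq C_0(\G)$, whence $a\in M(C_0(\G))$.

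I expect two genuine obstacles. The first is the fixed--point step in $(2)\Rightarrow(3)$: beyond making the leg--numbering manipulations rigorous as identities of multipliers, one must invoke $\{z\in L^\infty(\hG):\hat\Delta(z)=z\otimes 1\}=\C1$, and it is here that the structure theory of $\hG$ (injectivity of the coproduct together with the density/invariance properties of the Haar weights) really enters. The second is the two--sidedness in the moreover clause: being a \emph{left} Fourier multiplier gives left multiplication of $C_0(\G)$ directly, and the delicate point is to verify that the adjoint construction genuinely returns the symbol $a^{*}$ rather than some antipode--twisted version of it before one can conclude $a\in M(C_0(\G))$.
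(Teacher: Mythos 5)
Your cycle is correctly organized, and the implications $(1)\Leftrightarrow(2)$, $(2)\Rightarrow(3)$, $(3)\Rightarrow(4)\Rightarrow(1)$, together with the \emph{left}-multiplication half of the moreover clause ($aA(\G)\subseteq A(\G)$ plus norm density of $A(\G)$ in $C_0(\G)$ and $\|a\|_{\infty}$-boundedness give $aC_0(\G)\subseteq C_0(\G)$), are all sound. In particular, your proof of $(2)\Rightarrow(3)$ --- the $1\otimes N$-module property of $L\otimes\iota\otimes\iota$ applied to $\hat W_{13}\hat W_{23}$, cancellation against $(\hat\Delta\otimes\iota)\hat W^{*}=\hat W_{23}^{*}\hat W_{13}^{*}$, slicing, and the ergodicity fact $\{z\in L^\infty(\hG):\hat\Delta(z)=z\otimes 1\}=\C 1$ (which you should attribute to \cite{KuVa00,KuVa03}) --- is a correct, self-contained rendition of the argument that the paper black-boxes by citing \cite{JuNeRu09}; this is genuinely more than the paper's own sketch provides.

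The genuine gap is in the second half of the moreover clause, exactly at the point you flag as delicate and then assert anyway. The adjoint multiplier $L^{\dagger}(\hat x)=L(\hat x^{*})^{*}$ does satisfy (2), but its representing symbol is \emph{not} $a^{*}$: it is the unitary-antipode twist $R(a^{*})=R(a)^{*}$. This is already visible classically: with the normalization forced by $a\hat\lambda(\hat\omega)=\hat\lambda(L_{*}\hat\omega)$ one has $L(\lambda(s))=a(s^{-1})\lambda(s)$, hence $L^{\dagger}(\lambda(s))=\overline{a(s)}\,\lambda(s)$, whose symbol is $s\mapsto\overline{a(s^{-1})}$ rather than $\bar a$. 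Intrinsically, applying $\hat R\otimes R$ to $(L\otimes\iota)\hat W=(1\otimes a)\hat W$ and using the anti-multiplicativity together with $(\hat R\otimes R)\hat W=\hat W$ gives $\bigl((\hat R L\hat R)\otimes\iota\bigr)\hat W=\hat W(1\otimes R(a))$. Either way, what your construction actually yields is $C_0(\G)R(a)\subseteq C_0(\G)$, not $C_0(\G)a\subseteq C_0(\G)$; to pass from one to the other you would need to know that the set of symbols of left cb multipliers is invariant under $R$ (equivalently, closed under $a\mapsto a^{*}$), and that is essentially equivalent to the two-sidedness you are trying to establish. This is precisely why the paper defers the containment $M_{cb}A(\G)\subseteq M(C_0(\G))$ to \cite{Da11}, where it is proved by genuinely different means (Hilbert C$^{\ast}$-module and corepresentation techniques); it does not follow from the duality manipulations used elsewhere in your argument. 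Note that classically the gap is invisible because $C_0(G)$ is commutative, so one-sided multiplication suffices --- the quantum case is where the difficulty lives.
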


\begin{proof}[Sketch]
The equivalence $(1) \iff (2)$ is clear, and so is $(3) \iff (4) \implies (1)$ (recall that $\hat \lambda(\hat \omega) = (\hat \omega \otimes \iota)\hat W$).  The hard part is $(1) \implies (3)$ and the containment $M_{cb}A(\G) \subseteq  M(C_0(\G))$.  See \cite{JuNeRu09} and \cite{Da11}, respectively, for details.  
\end{proof}

\begin{rem}
Note that Condition (3) in Theorem \ref{M_cb-characterization} implies that for any $a \in M_{cb}A(\G)$, the corresponding map $L = L^{(a)} \in \mc {CB}_\sigma(L^\infty(\hG))$ leaves $C_0(\hG)$ invariant.  Indeed, if $\omega \in \mc K(L^2(\G))^*$, then $(\iota \otimes \omega)\hat W \in C_0(\hG)$ and  
\[
L((\iota \otimes \omega)\hat W) = (\iota \otimes \omega \cdot a)\hat W \in C_0(\hG).
\] The result now follows from the density of $\{(\iota \otimes \omega)\hat W: \omega \in \mc K(L^2(\G))^*\}$ in $C_0(\hG)$.
\end{rem}

\begin{rem}
When $G$ is a locally compact group, a classical unpublished result of Gilbert (see also \cite{Jo92}), gives a useful intrinsic characterization of $M_{cb}A(G)$.  Namely, a function $a \in  M(C_0(G)) = C_b(G)$ belongs to $M_{cb}A(G)$ if and only if there exists a Hilbert space $H$ and  $\alpha,\beta \in C_b(G,H)$ such that 
\begin{align}\label{Gilbert}a(s) = \la \alpha (st)|\beta(t) \ra \qquad (s,t \in G).
\end{align} Moreover, 
\[\|a\|_{M_{cb}A(G)} = \inf \|\alpha\|_{\infty}\|\beta\|_{\infty},\]
where the infimum runs over all $\alpha, \beta, H$ for which \eqref{Gilbert} holds.

In particular, this implies that $B(G)$, the Fourier-Stieltjes algebra of all coefficients of unitary (or even uniformly bounded) representations of $G$ on Hilbert space belong to $M_{cb}A(G)$, with $\|a\|_{M_{cb}A(G)} \le \|a\|_{B(G)}$.

It turns out that there is a quantum analogue of Gilbert's result which was obtained by Daws \cite{Da11}.  We shall not need this result here and simply refer the interested reader to the above reference.  What will be essential for us is the quantum analogue of the containment $B(G) \subseteq M_{cb}A(G)$, which we now state.
\end{rem}

\begin{prop} \label{B(G)-inclusion}
Let $H$ be a Hilbert space and $U \in M(C_0(\G) \otimes \mc K(H))$ a unitary representation of a lcqg $\G$.  Let $\omega \in \mc K(H)^*$ and let \[a = (\iota \otimes \omega)(U^*) \in  M(C_0(\G)).\]  Then $a \in M_{cb}A(\G)$ and $\|a\|_{M_{cb}A(\G)} \le \|\omega\|$.   Moreover, if $\omega \in \mc K(H)^*$ is positive, then  $a$ is a completely positive definite function.

\end{prop}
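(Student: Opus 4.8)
The plan is to produce the $\sigma$-weakly continuous completely bounded map $L$ on $L^\infty(\hG)$ whose existence is characterized in Proposition \ref{M_cb-characterization}, and to verify that it satisfies condition (3) of that proposition with \emph{exactly} the given element $a$. Doing so in one stroke proves $a \in M_{cb}A(\G)$, identifies the associated multiplier, and delivers the norm estimate. Regarding $U \in M(C_0(\G) \otimes \mc K(H)) \subseteq L^\infty(\G) \overline{\otimes} \mc B(H)$ as a unitary on $L^2(\G) \otimes H$, I would introduce the normal unital $\ast$-homomorphism
\[
\beta \from \mc B(L^2(\G)) \to \mc B(L^2(\G)) \overline{\otimes} \mc B(H), \qquad \beta(\hat x) = U(\hat x \otimes 1)U^*,
\]
and set $L = (\iota \otimes \omega) \circ \beta$. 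Since $\beta$ is a (completely isometric) $\ast$-homomorphism and the slice map $\iota \otimes \omega$ has completely bounded norm $\|\omega\|$, the composite $L$ is automatically normal with $\|L\|_{cb} \le \|\omega\|$; moreover, if $\omega \ge 0$ then $\iota \otimes \omega$ is completely positive, so $L = (\iota\otimes\omega)\circ\beta$ is completely positive, which will give the final assertion about completely positive definite functions.

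The step I expect to be the main obstacle is the lemma that $\beta$ actually maps $L^\infty(\hG)$ \emph{into} $L^\infty(\hG) \overline{\otimes} \mc B(H)$, so that $L$ is a well-defined map on $L^\infty(\hG)$ in the first place. I would check this on the $\sigma$-weakly dense set of slices $\hat x = (\rho \otimes \iota)W$ (with $\rho \in L^1(\G)$) that generate $L^\infty(\hG)$, and then pass to all of $L^\infty(\hG)$ by normality of $\beta$. The computation rests entirely on the corepresentation identity $(\Delta \otimes \iota)U = U_{13}U_{23}$ together with $\Delta(y) = W^*(1 \otimes y)W$, which in leg notation reads $W_{12}^* U_{23}W_{12} = U_{13}U_{23}$ and rearranges to the conjugation formula
\[
U_{23}\,W_{12}\,U_{23}^* = W_{12}\,U_{13}.
\]
Feeding $\hat x = (\rho \otimes \iota)W$ through $\beta$ and applying this identity turns $U(\hat x \otimes 1)U^*$ into $(\rho \otimes \iota \otimes \iota)(W_{12}U_{13})$, a slice over the $L^\infty(\G)$-leg of $W$ of an element of $L^\infty(\G) \overline{\otimes} L^\infty(\hG) \overline{\otimes} \mc B(H)$; slicing that leg visibly lands in $L^\infty(\hG) \overline{\otimes} \mc B(H)$, as required.

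Finally I would verify condition (3) of Proposition \ref{M_cb-characterization}, namely $(L \otimes \iota)(\hat W) = (1 \otimes a)\hat W$. Unwinding the definitions, $(L \otimes \iota)(\hat W) = (\iota \otimes \iota \otimes \omega)(U_{13}\,\hat W_{12}\,U_{13}^*)$, where now the leg shared by $U$ and $\hat W$ is the common copy of $L^2(\G)$. Writing $\hat W = \Sigma W^* \Sigma$ and applying the same corepresentation identity in its flipped form yields
\[
U_{13}\,\hat W_{12}\,U_{13}^* = U_{23}^*\,\hat W_{12},
\]
and since $\hat W_{12}$ is independent of the third leg, slicing it by $\omega$ factors out and leaves $(1 \otimes a)\hat W$ with $a = (\iota \otimes \omega)(U^*)$ precisely the given element. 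This is exactly condition (3), so Proposition \ref{M_cb-characterization} gives $a \in M_{cb}A(\G)$ together with $\|a\|_{M_{cb}A(\G)} = \|L\|_{cb} \le \|\omega\|$, and the complete positivity of $L$ in the case $\omega \ge 0$ finishes the proof. The only genuinely delicate bookkeeping is keeping the leg-conjugation identities consistent across the two computations; both reduce to the single relation $W_{12}^* U_{23}W_{12} = U_{13}U_{23}$, so no new input beyond the corepresentation property of $U$ is needed.
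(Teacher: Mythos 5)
Your proposal is correct and takes essentially the same route as the paper: both define $L(x) = (\iota \otimes \omega)(U(x \otimes 1)U^*)$, establish $(L \otimes \iota)\hat W = (1 \otimes a)\hat W$ from the corepresentation identity together with $\hat W = \Sigma W^* \Sigma$, and then invoke Proposition \ref{M_cb-characterization} to get $a \in M_{cb}A(\G)$ with the norm bound and the complete positivity statement. The only (cosmetic) difference is that you verify the invariance $\beta(L^\infty(\hG)) \subseteq L^\infty(\hG) \overline{\otimes} \mc B(H)$ as a separate preliminary lemma via slices $(\rho \otimes \iota)W$, whereas the paper deduces that $L$ lands in $L^\infty(\hG)$ directly from the $\hat W$ identity and the $\sigma$-weak density of $\{(\iota \otimes \mu)\hat W : \mu \in \mc B(L^2(\G))_*\}$.
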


\begin{proof}
We start off by defining our candidate for the map $L \in \mc {CB}_{\sigma}(L^\infty(\hG))$ induced by $a$ (as in Proposition \ref{M_cb-characterization}).  Define  
\[
L \in \mc {CB}_{\sigma}(L^\infty(\hG), \mc B(L^2(\G))); \qquad L(x) = (\iota \otimes \omega) (U(x \otimes 1)U^*).
\]
It  is clear from the structure of $L$ that $\|L\|_{cb} \le \|\omega\|$ and that $L$ is completely positive if $\omega$ is positive.  Next, we show that $L\in \mc {CB}_\sigma(L^\infty(\hG))$.  To do this, observe that we can write
\begin{align*}	
(L \otimes \iota) \hat W&= (\iota \otimes \iota \otimes \omega)(U_{13}\hat W_{12}U^*_{13}).
\end{align*}
But since $\hat W =  \Sigma W^*\Sigma$, we also have
\begin{align*}	
&U_{13}\hat W_{12}U^*_{13} = U_{13}\Sigma_{12} W^*_{12}\Sigma_{12} U^*_{13} = \Sigma_{12} U_{23}W^*_{12}U^*_{23}\Sigma_{12}\\
&= \Sigma_{12} U_{23}W^*_{12}U^*_{23}W_{12}W_{12}^*\Sigma_{12} \\
&= \Sigma_{12}U_{23}((\Delta \otimes \iota)U^*)W_{12}^*\Sigma_{12} \\
&=\Sigma_{12}U_{23}U_{23}^*U_{13}^*W_{12}^* \Sigma_{12}\\
&=\Sigma_{12}U_{13}^*W_{12}^*\Sigma_{12} = U_{23}^*\Sigma_{12}W_{12}^*\Sigma_{12} =  U_{23}^*\hat W_{12},
\end{align*}
and therefore
\[
(L \otimes \iota) \hat W = (\iota \otimes \iota \otimes\omega)(U_{23}^*\hat W_{12}.) = (1 \otimes a)\hat W. 
\]
From this last equation, it follows that $L$ maps into $L^\infty(\hG)$ (Indeed, just  recall that the set $\{(\iota \otimes \mu)\hat{W}: \mu \in \mc B(L^2(\G))_* \}$ is $\sigma$-weakly dense in $L^\infty(\hG)$).  Moreover, we have that $a \in M_{cb}A(\G)$ implements $L$ by Proposition \ref{M_cb-characterization}.  
\end{proof}

\begin{rem} \label{Daws-Bochner}
In Proposition \ref{B(G)-inclusion} it is shown that whenever we have a unitary representation $U \in M(C_0(\G) \otimes \mc K(H))$ and a state $\omega \in \mc K(H)^*$, the
adjoint map $L^{(a)} \in \mc {CB}_\sigma(L^\infty(\hG))$ corresponding to the multiplier $a = (\iota \otimes \omega)U^* \in M_{cb}A(\G)$ is unital and completely positive.  In fact, the converse statement is also true: if $L^{(a)} \in \mc {CB}_\sigma(L^\infty(\hG))$ is unital and completely positive, then $a \in M_{cb}A(\G)$ is of the form $a = (\iota \otimes \omega)U^*$ for some unitary representation $U \in M(C_0(\G) \otimes \mc K(H))$ and some state $\omega \in \mc K(H)^*$.  See \cite{Da12} for details.  The proof of this result for classical locally compact groups is an easy exercise for the reader (just observe that complete positivity of $a \in M_{cb}A(G)$ implies that $a$ is a continuous positive definite function on $G$).
\end{rem}

With the above remark in mind we will call any  $a \in M_{cb}A(\G)$ of the above form
a {\it (normalized) completely positive definite function} on $\G$.  The general theory of (completely) positive definite functions on lcqgs was studied in detail by Daws and Salmi \cite{DaSa13}.  We refer the reader to this paper for some of the more subtle aspects of this theory in the quantum context.  

\section{The Haagerup property and weak amenability} \label{hap/wa}

We now begin the discussion of our two approximation properties for locally compact quantum groups--the Haagerup property and weak amenability--which can be thought of as weak notions of amenability.  (Or more appropriately, weak notions of co-amenability for their duals, as we shall see below).  

\subsection{The Haagerup property}

The Haagerup property for general  locally compact quantum groups was developed by Daws, Fima, Skalski and White \cite{DaFiSkWh16}.  In what follows, we give a very brief and rather incomplete account.

\begin{defn}
We say that a lcqg $\G$ has the \textbf{Haagerup property} if there exists a bounded approximate identity $(a_i)_i$ for $C_0(\G)$ consisting of normalized completely positive definite functions. 
\end{defn}

In the classical setting, the Haagerup property for a locally compact group is often characterized in representation theoretic terms involving mixing unitary representations and almost invariant vectors.  We shall see now that this characterization carries through in the quantum setting.

\begin{defn}
A unitary representation $U \in M(C_0(\G) \otimes \mc K(H))$ of a lcqg $\G$ is called a {\it mixing representation} if all of its matrix elements belong to $C_0(\G)$.  That is,
\[
(\iota \otimes \omega_{\xi,\eta})U \in C_0(\G) \qquad (\xi, \eta \in H). 
\]
\end{defn}

\begin{ex}
The prototypical example of a mixing representation is the left-regular representation $W \in M(C_0(\G) \otimes C_0(\hG))$.  Indeed, by construction we have $(\iota \otimes \omega)W \in C_0(\G)$ for all $\omega \in \mc K(L^2(\G))^*$.  
\end{ex}

\begin{rem}
Let $\mu$ be a state on the universal C$^\ast$-algebra $C_0^u(\hG)$, associated to the dual quantum group $\hG$.  Then, following \cite{DaFiSkWh16}, we can find a Hilbert space $H$, a unitary representation $U \in M(C_0(\G) \otimes \mc K(H))$ and a cyclic unit vector $\xi \in H$ such that 
\[
\la \pi^u(\omega), \mu \ra = \la \omega, a \ra \qquad (\omega \in L^1(\G))
\]
where $\pi^u:L^1(\G) \to C^u_0(\hG)$ is the universal representation and $a  = (\iota \otimes \omega_{\xi})U \in  M(C_0(\G))$ is the corresponding matrix element of $U$.  

Clearly, if the representation $U$ is mixing, then  $a \in C_0(\G)$.  But in fact the following lemma shows that the converse is true as well.
\end{rem}

\begin{lem}\label{mixing}
Let $\mu \in C_0^u(\hG)^*$ be a state and let $a  = (\iota \otimes \omega_{\xi})U \in  M(C_0(\G))$ be the corresponding matrix element as above.  If $a \in C_0(\G)$, then $U$ is a mixing representation.
\end{lem}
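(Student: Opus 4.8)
The plan is to show that \emph{every} matrix coefficient
\[
a_{\zeta,\eta} := (\iota \otimes \omega_{\zeta,\eta})U \in M(C_0(\G)) \qquad (\zeta,\eta \in H)
\]
lies in $C_0(\G)$, bootstrapping from the single hypothesis $a = a_{\xi,\xi} \in C_0(\G)$ by combining the cyclicity of $\xi$ with the $L^1(\G)$-module structure. The two ingredients I would set up first are (i) a formula expressing how the module actions $\star$ move matrix coefficients of $U$, and (ii) the standard fact that these actions preserve $C_0(\G)$.

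For (i), I would start from the representation identity $(\Delta \otimes \iota)U = U_{13}U_{23}$ and slice the third leg by $\omega_{\zeta,\eta}$. Fixing an orthonormal basis $(e_q)$ of $H$, this gives the comultiplication formula $\Delta(a_{\zeta,\eta}) = \sum_q a_{e_q,\eta} \otimes a_{\zeta,e_q}$ (which one can sanity-check against the classical identity $\langle u(gh)\zeta,\eta\rangle = \sum_q \langle u(g)e_q,\eta\rangle\langle u(h)\zeta,e_q\rangle$). Applying $\iota \otimes \omega$ and $\omega \otimes \iota$ and using $\pi(\omega) = (\omega \otimes \iota)U$, the sums collapse and one obtains the clean transport rules
\[
\omega \star a_{\zeta,\eta} = a_{\pi(\omega)\zeta,\,\eta}, \qquad a_{\zeta,\eta}\star \omega = a_{\zeta,\,\pi(\omega)^*\eta} \qquad (\omega \in L^1(\G)).
\]

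With these in hand the argument proceeds in two limiting steps. For (ii) I invoke the standard fact from locally compact quantum group theory that $C_0(\G)$ is invariant under both module actions of $L^1(\G)$, i.e.\ $\omega \star C_0(\G) \subseteq C_0(\G)$ and $C_0(\G)\star \omega \subseteq C_0(\G)$ (this follows from the density conditions $\Delta(C_0(\G))(1\otimes C_0(\G)) \subseteq C_0(\G)\otimes C_0(\G)$ together with a norm approximation along an approximate unit). Then from $a_{\xi,\xi}\in C_0(\G)$ and the first transport rule, $a_{\pi(\omega)\xi,\xi} = \omega \star a_{\xi,\xi} \in C_0(\G)$ for every $\omega$. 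Since $\|a_{\zeta,\eta}\| \le \|\zeta\|\,\|\eta\|$ (as $U$ is unitary), the map $\zeta \mapsto a_{\zeta,\xi}$ is norm-continuous into the norm-closed subspace $C_0(\G)$, and cyclicity of $\xi$ makes $\{\pi(\omega)\xi : \omega \in L^1(\G)\}$ dense in $H$; hence $a_{\zeta,\xi}\in C_0(\G)$ for all $\zeta \in H$. Applying the second transport rule, $a_{\zeta,\pi(\omega)^*\xi} = a_{\zeta,\xi}\star \omega \in C_0(\G)$ for all $\zeta$ and all $\omega$. The set $\{\pi(\omega)^*\xi : \omega \in L^1(\G)\}$ is again dense, because it contains $\{\pi(\omega^\sharp)\xi : \omega \in L^1_\sharp(\G)\}$ and $L^1_\sharp(\G)$ is dense in $L^1(\G)$ with $\pi(\omega^\sharp) = \pi(\omega)^*$; so continuity in $\eta$ and closedness of $C_0(\G)$ upgrade this to $a_{\zeta,\eta}\in C_0(\G)$ for all $\zeta,\eta \in H$, which is exactly the assertion that $U$ is mixing.

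The genuinely routine part is the leg-numbering computation of Step (i). The main obstacle I expect is the bookkeeping around Step (ii): correctly invoking the preservation of $C_0(\G)$ under the $\star$-actions and then being careful with the two density/continuity closures (first in the left slot, then in the right slot), together with the identification that a GNS-cyclic vector for the associated $\ast$-representation $\hat\pi$ of $C^u_0(\hG)$ is precisely a vector with $\{\pi(\omega)\xi : \omega \in L^1(\G)\}$ dense, since $\hat\pi(C^u_0(\hG)) = \overline{\pi(L^1(\G))}^{\|\cdot\|}$.
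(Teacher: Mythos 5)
Your proposal is correct and follows essentially the same route as the paper: both arguments use cyclicity of $\xi$ to reduce mixing to the transport identities $\omega \star a_{\zeta,\eta} = a_{\pi(\omega)\zeta,\eta}$ and $a_{\zeta,\eta}\star\omega = a_{\zeta,\pi(\omega)^*\eta}$ (which the paper packages as the single formula $(\iota \otimes \omega_{\pi(\omega_1)\xi,\pi(\omega_2)\xi})U = \omega_1\star a \star \omega_2^\sharp$), and then to the fact that the $L^1(\G)$-module actions leave $C_0(\G)$ invariant, which the paper proves via Cohen factorization and the density conditions $\Delta(C_0(\G))(1\otimes C_0(\G)) \subseteq C_0(\G)\otimes C_0(\G)$, just as you indicate. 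The only cosmetic difference is that you treat the two legs sequentially with two density-and-continuity passes (using denseness of $L^1_\sharp(\G)$ only at the end), while the paper handles both legs simultaneously by working with $\omega_1,\omega_2 \in L^1_\sharp(\G)$ and the involution $\sharp$ from the start.
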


\begin{proof}
Let $\pi:L^1_\sharp(\G) \to \mc B(H)$ be the involutive $\ast$-representation corresponding to $U$.  Then $\xi$ is cyclic for $\pi$, and therefore it suffices to show that
\[
 (\iota \otimes \omega_{\pi(\omega_1)\xi,\pi(\omega_2)\xi})U \in C_0(\G) \qquad (\omega \in L^1_\sharp(\G)).
\]
But for $\omega \in L^1_\sharp(\G)$, 
\begin{align*}
\la\omega,  (\iota \otimes \omega_{\pi(\omega_1)\xi,\pi(\omega_2)\xi})U \ra &= \la \pi(\omega)\pi(\omega_1)\xi| \pi(\omega_2)\eta \ra \\
&= \la\pi(\omega_2^\sharp \star \omega \star \omega_1)\xi|\eta\ra \\
&= \la \omega_2^\sharp \star \omega \star \omega_1, a\ra =  \la \omega, \omega_1 \star a \star \omega_2^\sharp \ra,
\end{align*}
so
\[
 (\iota \otimes \omega_{\pi(\omega_1)\xi,\pi(\omega_2)\xi})U = \omega_1\star a \star \omega_2^\sharp.
\]
Thus our problem reduces to showing that the natural left/right module actions of $L^1_\sharp(\G)$ on $L^\infty(\G)$ leave $C_0(\G)$ invariant.  To do this, we can assume by the Cohen factorization theorem that $\omega_1  = c\cdot \omega_1' $ and  $\omega_2^\sharp = d\cdot\omega_2'$ for some $c, d \in C_0(\G)$ and $\omega_1',\omega_2' \in L^1(\G)$.  But then, 
\begin{align*}
\omega_1 \star a &= (\iota \otimes c \cdot \omega_1') \Delta(a) = (\iota \otimes \omega_1')(\Delta(a) (1 \otimes c)),\\
a\star \omega_2^\sharp &= (d \cdot \omega_2' \otimes \iota)\Delta(a) = (d \cdot\omega_2' \otimes \iota)(\Delta(a)(d \otimes 1)).
\end{align*}
Finally, observing that $\Delta(C_0(\G))(1 \otimes C_0(\G)) \subseteq C_0(\G) \otimes C_0(\G)$ and $\Delta(C_0(\G))(C_0(\G) \otimes 1) \subseteq C_0(\G) \otimes C_0(\G)$ (see \cite[Section 3]{KuVa00}), the result follows.  
\end{proof}

Next, we define the notion of  almost invariant vectors for a unitary representation and study its connection to the Haagerup property.

 \begin{defn}
A unitary representation $U \in M(C_0(\G) \otimes \mc K(H))$ of a lcqg $\G$ is said to have  {\it almost invariant vectors} if there is a net of unit vectors $(\xi_i)_i \subset H$ such that 
\[
\|U(\eta \otimes \xi_i) - (\eta \otimes \xi_i)\| \to 0 \qquad (\eta \in L^2(\G)).
\]
\end{defn} 

\begin{ex}
The left regular representation $W$ of lcqg $\G$ has almost invariant vectors if and only if $\hG$ is co-amenable.  See Theorem \ref{co-amen-characterization}.  
\end{ex}

\begin{ex}
Of course, any representation $U \in M(C_0(\G) \otimes \mc K(H))$ of a lcqg $\G$ with a {\it fixed vector}  (i.e., $ \exists 0 \ne \xi \in H$ such that $U(\eta \otimes \xi) = \eta \otimes \xi$ for all $\eta \in H$) trivially has almost invariant vectors. 
\end{ex}

\begin{thm}
The following conditions are equivalent for a lcqg $\G$:
\begin{enumerate}
\item $\G$ has the Haagerup property.
\item There is a mixing representation $U \in M(C_0(\G) \otimes \mc K(H))$ with almost invariant vectors.
\end{enumerate}
\end{thm}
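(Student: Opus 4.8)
The plan is to route both implications through the single net of normalized completely positive definite functions naturally attached to a representation and its (almost invariant) vectors. Given a unitary representation $U \in M(C_0(\G)\otimes\mc K(H))$ and a net of unit vectors $(\xi_i)_i \subset H$, set
\[
a_i = (\iota \otimes \omega_{\xi_i})U^* \in M(C_0(\G)).
\]
By Proposition \ref{B(G)-inclusion} each $a_i$ is a normalized completely positive definite function with $\|a_i\| \le 1$, and by Lemma \ref{mixing} the condition $a_i \in C_0(\G)$ is equivalent to $U$ being mixing. The computational heart of the argument is the elementary identity, valid for every $\eta \in L^2(\G)$,
\[
\|U(\eta \otimes \xi_i) - \eta \otimes \xi_i\|^2 = 2\,\mathrm{Re}\,\omega_\eta(1 - a_i),
\]
obtained by expanding the left-hand side, using unitarity of $U$ and the relation $(\iota \otimes \omega_{\xi_i})(U) = a_i^*$ (which holds because $\omega_{\xi_i}$ is a state). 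Since $\|a_i\| \le 1$, the right-hand side also dominates $\|a_i \eta - \eta\|^2$. Thus $(\xi_i)_i$ is almost invariant for $U$ if and only if $a_i \to 1$ strongly on $L^2(\G)$, and, the net being contractive and the vector functionals being norm-dense in $L^1(\G)$, this is in turn equivalent to $a_i \to 1$ $\sigma$-weakly in $L^\infty(\G)$.

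For the implication $(1) \Rightarrow (2)$ I start from a contractive approximate identity $(a_i)_i \subset C_0(\G)$ for $C_0(\G)$ consisting of normalized completely positive definite functions. The converse half of Remark \ref{Daws-Bochner} (together with the GNS construction discussed before Lemma \ref{mixing}) lets me realise each $a_i = (\iota \otimes \omega_{\xi_i})U_i^*$ for a unitary representation $U_i \in M(C_0(\G)\otimes\mc K(H_i))$ and a cyclic unit vector $\xi_i$; as $a_i \in C_0(\G)$, Lemma \ref{mixing} makes each $U_i$ mixing. I then set $U = \bigoplus_i U_i$ on $H = \bigoplus_i H_i$ and view $\xi_i$ as a unit vector in the $i$-th summand. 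Two routine verifications are needed: that $U$ is again a unitary representation lying in $M(C_0(\G)\otimes\mc K(H))$ (checked blockwise on finite-rank operators and extended by density), and that $U$ is mixing (each matrix coefficient of $U$ is a norm-limit of finite sums of coefficients of the $U_i$, all of which lie in the closed subspace $C_0(\G)$). Finally, the approximate-identity property forces $a_i \to 1$ strongly, so the displayed identity applied to $U_i$, together with $U(\eta \otimes \xi_i) = U_i(\eta \otimes \xi_i)$, gives $\|U(\eta \otimes \xi_i) - \eta \otimes \xi_i\| \to 0$; hence $U$ is mixing with almost invariant vectors.

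For $(2) \Rightarrow (1)$ I take $U$ mixing with almost invariant vectors $(\xi_i)_i$ and use exactly the functions $a_i = (\iota \otimes \omega_{\xi_i})U^*$ above. They are normalized completely positive definite (Proposition \ref{B(G)-inclusion}), contractive, and lie in $C_0(\G)$ since $U$ is mixing; and by the identity they satisfy $a_i \to 1$ strongly, hence $\sigma$-weakly. What remains is to promote this $\sigma$-weak (equivalently strong) convergence of the contractive net $(a_i)$ to the genuine approximate-identity statement $\|a_i b - b\| \to 0$ and $\|b a_i - b\| \to 0$ for all $b \in C_0(\G)$. This is the main obstacle: strong convergence to $1$ does not suffice on its own, because elements of $C_0(\G)$ need not act compactly on $L^2(\G)$. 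The point where it is overcome is the complete positivity of the $a_i$: the required upgrade is the operator-algebraic counterpart of the classical rigidity fact that, on the normalized positive definite functions, $\sigma$-weak convergence already coincides with the approximate-identity (uniform-on-compacta) convergence, and I would carry it out using the structure theory of completely positive definite functions of \cite{DaSa13, DaFiSkWh16}. I note that for discrete $\G$ the obstacle evaporates: each $a_i$ preserves every finite-dimensional block $\mc B(H_\pi)$, on which strong and norm convergence coincide, so $\|a_i p_\pi - p_\pi\| \to 0$ and $(a_i)$ is an approximate identity already on the dense subalgebra $C_c(\G)$.
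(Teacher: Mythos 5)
Your argument is correct and follows essentially the same route as the paper in both directions: realize normalized completely positive definite functions as coefficients of unitary representations (Remark \ref{Daws-Bochner}), invoke Lemma \ref{mixing} to get mixing, pass to a direct sum, and tie almost invariance to convergence of coefficients via the expansion of $\|U(\eta \otimes \xi_i) - \eta \otimes \xi_i\|^2$. Two differences are worth recording. (i) Where the paper moves between coefficients of $U_i$ and of $U_i^*$ using the inverse antipode ($b_i = S^{-1}a_i$, which needs the fact that $S^{-1}$ maps $\mc D(S^{-1}) \cap C_0(\G)$ into $C_0(\G)$), you instead use that $b_i = a_i^*$, valid because $\omega_{\xi_i}$ is positive and $U$ is unitary, together with the $\ast$-closedness of $C_0(\G)$; this is a legitimate and cleaner shortcut, it replaces the paper's $L^1_\sharp(\G)$-density argument for $b_i \to 1$ by the $\sigma$-weak continuity of the adjoint, and it also lets you work with coefficients of $U^*$ throughout, avoiding the paper's final application of $S$ in the converse direction. (ii) In $(2) \Rightarrow (1)$ both you and the paper defer the decisive upgrade --- from $\sigma$-weak convergence $a_i \to 1$ to $\|a_i b - b\|_{C_0(\G)} \to 0$ --- to \cite{DaFiSkWh16}, but your description of how that upgrade goes is not quite what those sources establish: you posit a quantum Raikov-type rigidity (``on normalized completely positive definite functions, $\sigma$-weak convergence already coincides with approximate-identity convergence'' for the net itself), whereas the argument outlined in the paper shows only that $b_i x \to x$ and $x b_i \to x$ \emph{weakly} in the Banach space $C_0(\G)$, and then extracts a bounded approximate identity from the \emph{convex hull} by a Hahn--Banach/Mazur argument. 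Your setup is fully compatible with that mechanism --- since all your $a_i$ are coefficients of the single representation $U$ against vector states, any convex combination is again of the form $(\iota \otimes \omega)U^*$ with $\omega$ a state, hence normalized completely positive definite --- but as stated, the rigidity fact you invoke is stronger than what is available in \cite{DaSa13, DaFiSkWh16}, so the safe formulation of this step is via convex combinations rather than the net itself.
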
 

\begin{rem}
Before starting the proof, we make one observation.  Recall that the antipode $S:L^\infty(\G) \to L^\infty(\G)$ is a densely defined invertible operator which satisfies the formal identity   
\[
(S \otimes \iota)U = U^*
\] 
for any unitary representation $U \in M(C_0(\G) \otimes \mc K(H))$ of $\G$.  In particular, this identity implies that any coefficient of a unitary representation $b = (\iota \otimes \omega_{\xi,\eta})U$ belongs to the domain $\mc D(S)$ of $S$, and $Sb =  (\iota \otimes \omega_{\xi,\eta})U^* \in M_{cb}A(\G)$.
\end{rem}

\begin{proof}
$(1) \implies (2)$.  Let $(a_i)_i \subseteq M_{cb}A(\G) \cap C_0(\G)$ be a bounded approximate identity for $C_0(\G)$ consisting of normalized completely positive definite functions.  For each $i$, Remark \ref{Daws-Bochner} supplies us with a Hilbert space  $H_i$, a state $\omega_i \in \mc K(H_i)^*$ and a unitary representation $U_i \in M(C_0(\G) \otimes \mc K(H_i))$
such that    
\[
a_i = (\iota \otimes \omega_i)U_i^* \in C_0(\G).
\]
Moreover we may assume that $\omega_i = \omega_{\xi_i}$ for some unit vector $\xi_i \in H_i$ and that $\xi_i$ is cyclic for $U_i$.  Indeed, we can amplify $(U_i,H_i)$ if necessary to obtain $\xi_i$, and then we can restrict $U_i$ to the cyclic representation generated by $\xi_i$. 

Set $b_i = (\iota \otimes \omega_{\xi_i})U_i = S^{-1}a_i.$  Since $S^{-1}:\mc D(S^{-1}) \cap C_0(\G) \to C_0(\G)$ (see \cite[Section 5]{KuVa00}), we have that $b_i \in C_0(\G)$ for each $i$, and therefore each $U_i$ is a mixing representation by Lemma \ref{mixing}. 
Put $H = \oplus_i H_i$ and $U = \bigoplus_i{U_i} \in M(C_0(\G) \otimes \mc K(H))$, where $H = \bigoplus H_i$.  Then $U$ is again a mixing representation, since direct sums of mixing representations are obviously still mixing. Now, since $(a_i)_i$ is a bounded approximate identity for $C_0(\G)$, it follows that $a_i \to 1$ $\sigma$-weakly in $L^\infty(\G)$, and the same is true for $(b_i)_i$.  Indeed, since $(b_i)_i$ is a bounded net, it suffices to show that 
\[
\la  \omega, b_i\ra \to \la \omega, 1 \ra \qquad (\omega \in L^1_\sharp(\G)). 
\]  But for this dense subspace we have 
\[
\la  \omega, b_i\ra = \overline{\la (\omega^\sharp)^* \circ S, S^{-1}a_i  \ra} = \overline{\la (\omega^\sharp)^* , a_i  \ra} \to  \overline{\la (\omega^\sharp)^* ,1  \ra} = \la \omega , 1  \ra.
\] In particular, this $\sigma$-weak convergence implies that for any $\eta \in L^2(\G)$
\begin{align}\label{al-in}
\|U(\eta \otimes \xi_i) - (\eta \otimes \xi_i)\|^2 = 2\|\eta\|^2 - 2\Re\langle \omega_\eta, b_i\rangle \to 2\|\eta\|^2 - 2\Re\langle \omega_\eta, 1 \ra =  0.
\end{align}
Therefore the vectors $(\xi_i)_i \subseteq H$ are almost invariant for the representation $U$. 

$(2) \implies (1)$.  We only outline this direction and refer the reader to the proof of \cite[Theorem 5.5]{DaFiSkWh16} for the full details.  Let $U \in M(C_0(\G) \otimes \mc K(H))$ be a mixing representation, let $(\xi_i)_i \subset H$ be a family of almost invariant vectors for $U$, and let $b_i = (\iota \otimes \omega_{\xi_i})U \in C_0(\G)$.  It then follows from equation \eqref{al-in} that $b_i\to 1 \ \sigma\text{-weakly in  }L^\infty(\G)$.  Working a little harder, one can actually show that $b_ix \to x$ and $xb_i \to x$ weakly in $C_0(\G)$ ($x \in C_0(\G)$), and thus the the convex hull of $(b_i)_i$ contains a bounded approximate 
identity (BAI) for $C_0(\G)$.  In particular, this yields a BAI $(c_j)_j$ for $C_0(\G)$ consisting of elements of the form $c_j = (\iota \otimes \omega_j)U$, where $\omega_j \in \mc K(H)^*$ is a state. Setting $a_j = S(c_j) = (\iota \otimes \omega_j)U^*$, we obtain a net of normalized completely positive definite functions on $\G$ which is also a BAI for $C_0(\G)$.
\end{proof}

\begin{cor}
A locally compact quantum group has the Haagerup property if $\hG$ is co-amenable.
\end{cor}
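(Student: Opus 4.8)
The plan is to exhibit a single representation of $\G$ that is simultaneously mixing and possesses almost invariant vectors, and then to invoke the preceding theorem characterizing the Haagerup property. The natural candidate is the left regular representation $W \in M(C_0(\G) \otimes C_0(\hG))$, which is the prototypical mixing representation: by construction $(\iota \otimes \omega)W \in C_0(\G)$ for every $\omega \in \mc K(L^2(\G))^*$, so all of its matrix elements lie in $C_0(\G)$. Thus the only thing to check is that $W$ acquires almost invariant vectors under the hypothesis that $\hG$ is co-amenable.

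To produce these vectors, I would apply Theorem \ref{co-amen-characterization} to the dual $\hG$ in place of $\G$. Co-amenability of $\hG$ then supplies a net of unit vectors $(\xi_i)_i \subset L^2(\hG) = L^2(\G)$ such that
\[
\|\hat W(\xi_i \otimes \eta) - \xi_i \otimes \eta\| \to 0 \qquad (\eta \in L^2(\G)).
\]
Next I would transport this estimate from the first leg to the second using the relation $\hat W = \Sigma W^* \Sigma$ together with the unitarity of $\Sigma$ and $W$. Applying the flip $\Sigma$ (which sends $\xi_i \otimes \eta$ to $\eta \otimes \xi_i$) turns the above into $\|W^*(\eta \otimes \xi_i) - \eta \otimes \xi_i\| \to 0$, and since $\|W^* v - v\| = \|W v - v\|$ for the unitary $W$, this yields
\[
\|W(\eta \otimes \xi_i) - \eta \otimes \xi_i\| \to 0 \qquad (\eta \in L^2(\G)),
\]
which is precisely the statement that $(\xi_i)_i$ is a net of almost invariant vectors for the representation $W$.

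With both ingredients established, the conclusion is immediate: $W$ is a mixing representation of $\G$ admitting almost invariant vectors, so the theorem characterizing the Haagerup property gives that $\G$ has the Haagerup property. I do not anticipate any serious obstacle, since both facts have essentially been recorded already in the two examples following the relevant definitions; the only point requiring a little care is the bookkeeping of legs in the passage from $\hat W$ to $W$, which is exactly the manipulation carried out in the proof of Corollary \ref{co-amen-to-amen}.
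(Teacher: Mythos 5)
Your proposal is correct and is essentially the paper's own argument: the paper's proof is the one-line observation that co-amenability of $\hG$ gives the left regular representation almost invariant vectors (via Theorem \ref{co-amen-characterization} and the flip $\hat W = \Sigma W^*\Sigma$, exactly the leg manipulation from the proof of Corollary \ref{co-amen-to-amen}), combined with the fact that $W$ is mixing. You have simply written out explicitly the steps the paper leaves implicit.
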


\begin{proof}
If $\hG$ is co-amenable, then the left-regular representation has almost invariant vectors. 
\end{proof}


\subsection{Weak amenability}

We now define another approximation property for locally compact quantum groups, called  amenability.  For locally compact groups, this concept is due to Cowling and Haagerup \cite{CoHa89}.

\begin{defn} \label{weakamen}
A locally compact quantum group $\G$ is called {\it weakly amenable} if there exists a net $(a_i)_i \subseteq A(\G)$  such that \[\|a_ib-b\|_{A(\G)} \to 0 \quad (b \in A(\G)) \quad \text{and}\quad  \Lambda((a_i)_i):= \limsup_i\|a_i\|_{M_{cb}A(\G)} < \infty.\]
\end{defn}
 
In short, $\G$ is weakly amenable precisely when $A(\G)$ has a left approximate identity that is uniformly bounded in the $\|\cdot\|_{M_{cb}A(\G)}$-norm.  We define the {\it Cowling-Haagerup constant for $\G$} to be the number \[\Lambda_{cb}(\G):= \inf \{\Lambda((a_i)_i): (a_i)_i \subset A(\G) \text{ satisfies Definition \ref{weakamen}}\}.\] 

The next proposition shows that weak amenability is really a weak form of co-amenability for $\hG$.

\begin{prop}
If $\hG$ is co-amenable, then $\G$ is weakly amenable and $\Lambda_{cb}(\G) = 1$.
\end{prop}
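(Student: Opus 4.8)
The plan is to extract from the co-amenability of $\hG$ an approximate identity for $A(\G)$ whose elements are coefficients of the regular representation against states, and then to read off the completely bounded multiplier bound from Proposition \ref{B(G)-inclusion}.

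First I would apply Theorem \ref{co-amen-characterization} to $\hG$ (in place of $\G$): co-amenability of $\hG$ furnishes a bounded left approximate identity for $L^1(\hG)$, and---following the implication $(5)\implies(6)$ of that theorem---it may be taken to consist of states $\hat\omega_i = \omega_{\xi_i} \in L^1(\hG)$ coming from a net of unit vectors $(\xi_i)_i \subset L^2(\G)$. Transporting through the isometric Banach-algebra isomorphism $\hat\lambda\colon L^1(\hG) \to A(\G)$, I set $a_i = \hat\lambda(\hat\omega_i)$. Then $\|a_i b - b\|_{A(\G)} = \|\hat\omega_i \star \hat\lambda^{-1}(b) - \hat\lambda^{-1}(b)\|_{L^1(\hG)} \to 0$ for all $b \in A(\G)$, so $(a_i)_i$ is a left approximate identity for $A(\G)$ in the $A(\G)$-norm, exactly as Definition \ref{weakamen} requires.

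The key step is the multiplier estimate. Since $\hat\lambda(\hat\omega) = (\iota \otimes \hat\omega)W^*$ and $W$ is a unitary representation of $\G$ on $L^2(\G)$, viewing $\omega_{\xi_i}$ as a state in $\mc K(L^2(\G))^*$ identifies $a_i = (\iota \otimes \omega_{\xi_i})W^*$ with exactly the kind of coefficient treated in Proposition \ref{B(G)-inclusion} (take $U = W$). That proposition then gives $a_i \in M_{cb}A(\G)$ with $\|a_i\|_{M_{cb}A(\G)} \le \|\omega_{\xi_i}\| = 1$, and, as $\omega_{\xi_i}$ is positive, each $a_i$ is a normalized completely positive definite function. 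Hence $\Lambda((a_i)_i) = \limsup_i \|a_i\|_{M_{cb}A(\G)} \le 1$, so $\G$ is weakly amenable with $\Lambda_{cb}(\G) \le 1$. For the matching lower bound I would use the elementary observation that $\Lambda_{cb}(\G) \ge 1$ always: for any net $(c_j)_j$ as in Definition \ref{weakamen} and any $b_0 \in A(\G)$ with $\|b_0\|_{A(\G)} = 1$, one has $\|c_j b_0\|_{A(\G)} \to 1$, so $\|c_j\|_{M_{cb}A(\G)} \ge \|m_{c_j}\| \ge \|c_j b_0\|_{A(\G)} \to 1$. Combining the two bounds yields $\Lambda_{cb}(\G) = 1$.

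I expect the only real subtlety to be the bookkeeping that produces the sharp constant $1$ rather than merely a finite bound: one must verify both that the approximate identity supplied by co-amenability can be realized by states $\omega_{\xi_i}$ and that slicing $W^*$ against $\omega_{\xi_i} \in \mc K(L^2(\G))^*$ genuinely reproduces $\hat\lambda(\hat\omega_i)$. Both are immediate once the identity $\hat\lambda(\hat\omega) = (\iota \otimes \hat\omega)W^*$ and the inclusion $L^1(\hG) \hookrightarrow \mc K(L^2(\G))^*$ are made explicit, so I anticipate no genuine obstacle beyond assembling these pieces correctly.
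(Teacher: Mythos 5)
Your proposal is correct and is essentially the paper's own argument: the paper's proof is the one-line observation that co-amenability of $\hG$ gives $A(\G)$ a left bounded approximate identity consisting of normalized completely positive definite functions, which is exactly what you assemble from Theorem \ref{co-amen-characterization} (applied to $\hG$) together with Proposition \ref{B(G)-inclusion} applied to $U = W$. Your extra verification that $\Lambda_{cb}(\G) \ge 1$ always holds is a correct (and harmless) detail that the paper leaves implicit.
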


\begin{proof}
When $\hG$ is co-amenable, $A(\G)$ has a left bounded approximate identity consisting of normalized completely positive definite functions.
\end{proof}

When $\G$ is a classical group or a discrete quantum group, we know that $\hG$ is co-amenable if and only if $\G$ is amenable.  This equivalence is unknown for general locally compact quantum groups.  A positive solution to this problem would provide an affirmative solution to the following open problem.

\begin{quest}
Let $\G$ be an amenable locally compact quantum group.  Does $\G$ have the Haagerup property and is $\Lambda_{cb}(\G) < \infty$?
\end{quest}

\subsection{Some examples}

\begin{ex}[See \cite{Ha78, CoHa89}]
If $G = \F_n, SL_2(\Z), SL_2(\R), SU(n,1), SO(n,1)$, then $G$ has the Haagerup property and $\Lambda_{cb}(G) = 1$.   
\end{ex}

\begin{ex}[See \cite{Ha86}]
 If $G$ is any connected simple Lie group of real rank greater than $1$ with finite center, then $G$ doe not have the Haagerup property and $\Lambda_{cb}(G) = \infty$.
\end{ex}

\begin{ex}[See \cite{Oz08}]
Let $G$ be a finitely generated word hyperbolic group, then $\Lambda_{cb}(G) < \infty$.
\end{ex}

In the quantum setting, we have the following examples.

\begin{ex}[See \cite{DaFiSkWh16,Fr12}]
Let $(\G_i)_{i \in I}$ be a family of discrete quantum groups.  If each $\G_i$ has the Haagerup property, then their free product $\G = \ast_{i \in I} \G$ has the Haagerup property.  Similarly, if \[\Lambda_{cb}(\G_i) = 1 \quad \text{for each $i$, then} \quad 
\Lambda_{cb}(\G) = 1.
\]
\end{ex}

\begin{ex}[See \cite{Br12, Br13, Fr13, dCFY, Le14}]
Let $\G$ be the discrete quantum group such that $(\hG$ is one of the following compact quantum groups: $O^+_F, U^+_F, S_N^+, H_N^{+(s)}$, where $F \in GL_N(\C)$.  Then  $\G$ has the Haagerup property and $\Lambda_{cb}(\G) = 1$.   We will discuss some of these examples in more detail in Section \ref{app}.  
\end{ex}

In the non-discrete quantum setting, there turns out to be a scarcity of examples.  We highlight the following remarkable result of Caspers \cite{Ca14}.  

\begin{ex}[See \cite{Ca14}]
The (non-discrete) locally compact quantum group $\G = SU_q(1,1)$ is weakly amenable with $\Lambda_{cb}(\G) = 1$, co-amenable, and has the Haagerup property.  
\end{ex}

\section{Discrete quantum groups and operator algebra approximation properties} \label{discr}

In this section we restrict our attention to discrete quantum groups and explore some connections between amenability, weak amenability and the Haagerup property for $\G$, and various approximation properties for C$^\ast$-algebras and von Neumann algebras associated to $\G$.

\subsection{Some operator algebra approximation properties}

Let us start by defining the operator algebra approximation properties that will be relevant to us.

\begin{defn}
Let $M$ be a von Neumann algebra.  We say that $M$ has the {\it weak$^\ast$ completely bounded approximation property (w$^\ast$CBAP)} if there is a net of normal, finite rank cb maps $T_i:M \to M$ such that $T_i \to \iota_M$ pointwise $\sigma$-weakly and $\limsup_i\|T_i\|_{cb} :=C < \infty$.  We call the infimum of all these $C$'s  $\Lambda_{cb}(M)$, the {\it Cowling-Haagerup constant of $M$}. 
\end{defn}

\begin{defn}
Let $A$ be a C$^\ast$-algebra.  We say that $A$ has the {\it completely bounded approximation property (CBAP)} if there is a net of finite rank cb maps $T_i:A \to A$ such that $T_i \to \iota_A$ pointwise in norm and  $\limsup_i\|T_i\|_{cb} :=C < \infty$.  We call the infimum of all these $C$'s  $\Lambda_{cb}(A)$, the {\it Cowling-Haagerup constant of $A$}. 
\end{defn}

\begin{defn}
Let $M$ be a von Neumann algebra equipped with a faithful normal semi-finite weight  $\varphi$.  We say that $(M,\varphi)$ has the {\it Haagerup approximation property} if there exists a net $(\Phi_i)_i$ unital completely positive maps $\Phi_i:M \to M$ such that $\varphi(\Phi_i(x)) \le \varphi(x)$ for all $x \in M_+$, and such that the induced maps 
\[T_i \in \mc B(L^2(M,\varphi); \qquad T_i\Lambda_\varphi(x) = \Lambda_\varphi(\Phi_i(x)) \qquad (x \in \mathfrak N_\varphi)\] are compact and converge strongly to $\iota_{L^2(M,\varphi)}$.
\end{defn}

\begin{rem}
The above definition of the Haagerup property for a von Neumann algebra  is taken from \cite{CaSk15}, where this notion  is studied in detail.  In particular, it is shown there that this property is independent of the choice of weight $\varphi$ on $M$, and therefore one can simply say that {\it $M$ has the Haagerup property}, without reference to a choice of weight.  It should also be mentioned that another approach to the Haagerup property for von Neumann algebras was developed at the same time in \cite{OkTo15}, but this time using the standard form of a von Neumann algebra.  It turns out that both of these approaches to the Haagerup property are equivalent \cite{CaOkSkTo14}.
\end{rem}

\begin{rem}
The above three definitions should be regarded as weaker forms of nuclearity (in the C$^\ast$-context) and injectivity (in the von Neumann context).  Indeed, all three of the above properties are implied by nuclearity/injectivity -- compare with Defintions \ref{wcpap}-\ref{cpap}. 
\end{rem}

We now come to the main theorem of this section, which connects quantum group approximation properties to operator algebra approximation properties in the discrete case.  

\begin{thm}  \label{discrete-AP}
Let $\G$ be a discrete quantum group with compact dual $\hG$.  Then we have the following implications.
\begin{enumerate}
\item $\G$ is amenable $\implies$ $L^\infty(\hG)$ is injective and $C(\hG)$ is nuclear.
\item $\G$ has the Haagerup property $\implies$ $L^\infty(\hG)$ has the Haagerup approximation property.
\item $\G$ is weakly amenable $\implies$ $L^\infty(\hG)$ has the w$^*$CBAP and $C(\hG)$ has the CBAP.  In this case we also have 
\[
\Lambda_{cb}(L^\infty(\hG)), \Lambda_{cb}(C(\hG)) \le \Lambda_{cb}(\G).
\] 
\end{enumerate}
If $\G$ is, in addition,  a {\it unimodular} discrete quantum group, then the reverse implications in $(1)-(3)$ hold, and moreover
\[
\Lambda_{cb}(L^\infty(\hG)) =  \Lambda_{cb}(C(\hG)) = \Lambda_{cb}(\G).
\]
\end{thm}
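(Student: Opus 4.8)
The plan is to work through the dictionary, furnished by Proposition \ref{M_cb-characterization} and Remark \ref{Daws-Bochner}, between completely bounded Fourier multipliers on $\G$ and normal completely bounded maps on $L^\infty(\hG)$: a multiplier $a \in M_{cb}A(\G)$ corresponds to a normal cb map $L^{(a)}$ on $L^\infty(\hG)$ with $\|L^{(a)}\|_{cb} = \|a\|_{M_{cb}A(\G)}$, and $a$ is completely positive definite precisely when $L^{(a)}$ is completely positive. The forward implications amount to feeding the multipliers supplied by the quantum-group approximation property into this correspondence, whereas the harder reverse implications require averaging an arbitrary net of approximating maps on $L^\infty(\hG)$ back into multipliers, a step that behaves well only when $\hat\varphi$ is tracial. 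Implication $(1)$ is immediate: since $\hG$ is compact, $C_0(\hG)=C(\hG)$, so amenability of $\G$ yields nuclearity of $C(\hG)$ and injectivity of $L^\infty(\hG)$ directly from Theorem \ref{injectivity}.

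For the forward halves of $(2)$ and $(3)$ I would set $\Phi_i := L^{(a_i)}$, where $(a_i)$ is the approximating net from the Haagerup property (normalized completely positive definite $a_i \in C_0(\G)$) or from weak amenability ($a_i \in A(\G)$, uniformly $M_{cb}A(\G)$-bounded). The decisive structural input is the Peter--Weyl decomposition $L^2(\hG) = \bigoplus_{\pi} (H_\pi \otimes \overline{H_\pi})$ together with $C_0(\G) = \bigoplus^{c_0}_\pi \mc B(H_\pi)$: each $\Phi_i$ acts block-diagonally over $\pi \in \Irr(\hG)$, the $\pi$-block being governed by the component $(a_i)_\pi \in \mc B(H_\pi)$. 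Hence if $a_i \in C_0(\G)$ then $\|(a_i)_\pi\| \to 0$, so the $L^2$-implementation of $\Phi_i$ is block-diagonal with finite-dimensional blocks of vanishing norm, i.e. compact; this yields the Haagerup approximation property once one checks (as in \cite{DaFiSkWh16, CaSk15}) that the $\Phi_i$ are $\hat\varphi$-preserving ucp maps converging strongly to the identity, the latter following from $(a_i)$ being an approximate identity for $C_0(\G)$. For $(3)$, after truncating each $a_i$ by a finitely supported central projection $\sum_{\pi \in F} p_\pi$ (a completely contractive operation, so the $M_{cb}A(\G)$-bound survives) one may assume $a_i \in C_c(\G)$, whence $\Phi_i$ has finite rank; the convergence $\|a_i b - b\|_{A(\G)} \to 0$ transfers through $\hat\lambda$ to $\Phi_i \to \iota$ pointwise $\sigma$-weakly, giving the w$^*$CBAP with $\Lambda_{cb}(L^\infty(\hG)) \le \Lambda_{cb}(\G)$, and restricting to $C(\hG)$ (multipliers preserve $C_0(\hG)$) upgrades this to norm convergence and the corresponding CBAP bound.

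For the reverse implications assume $\G$ unimodular, so $\hat\varphi$ is a faithful normal tracial state and every $Q_\pi = 1$. The key tool is an averaging map sending a normal cb map $T$ on $L^\infty(\hG)$ to a multiplier $a_T \in M_{cb}A(\G)$, modeled on Haagerup's classical recipe $\phi(g) = \tau(T(\lambda_g)\lambda_g^*)$: one extracts the matrix Fourier coefficients of $T$ against the matrix units $u^\pi_{ij}$ and integrates with the Haar trace. Traciality guarantees that this averaging is cb-contractive, $\|a_T\|_{M_{cb}A(\G)} \le \|T\|_{cb}$, that it maps ucp maps to completely positive definite functions, sends finite-rank maps to finitely supported $a_T \in C_c(\G)$ and $L^2$-compact maps to $a_T \in C_0(\G)$, and converts $T_i \to \iota$ into $a_{T_i} \to 1$. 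Applying this to the relevant nets: injectivity supplies (via Connes' w$^*$CPAP) ucp finite-rank maps whose averages form a bounded approximate identity of completely positive definite functions in $A(\G) \cong L^1(\hG)$, so $\hG$ is co-amenable and $\G$ amenable by Theorem \ref{co-amen-characterization} and Corollary \ref{co-amen-to-amen}; the Haagerup approximation property averages to normalized cp definite functions in $C_0(\G)$ forming an approximate identity for $C_0(\G)$, giving the Haagerup property; and the w$^*$CBAP averages to a uniformly $M_{cb}A(\G)$-bounded net in $C_c(\G)$ with $a_{T_i} \to 1$, which a standard Mazur/convexity argument promotes to a net satisfying $\|a_{T_i} b - b\|_{A(\G)} \to 0$. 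This yields weak amenability with $\Lambda_{cb}(\G) \le \Lambda_{cb}(L^\infty(\hG))$ and, combined with the forward bound, the equalities $\Lambda_{cb}(\G) = \Lambda_{cb}(L^\infty(\hG)) = \Lambda_{cb}(C(\hG))$.

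The main obstacle is exactly this averaging step, and with it the indispensability of unimodularity. When $\hat\varphi$ fails to be a trace, the recovery of a multiplier from a general cb map acquires modular corrections by the matrices $Q_\pi$: the naive averaging is no longer completely contractive, it need not carry completely positive maps to completely positive definite functions, and the clean block-diagonal picture on $L^2(\hG)$ is twisted by the modular operator. Verifying that the tracial averaging enjoys all four listed properties simultaneously -- cb-contractivity, preservation of complete positivity, and the finite-rank/compactness bookkeeping -- is the technical heart of the argument, while every remaining step is either a direct citation (Theorem \ref{injectivity}, Connes' theorem) or a routine transfer through the multiplier correspondence.
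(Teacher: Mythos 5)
Your strategy coincides with the paper's: the forward implications feed the multiplier net through the correspondence $a \mapsto L^{(a)}$ of Proposition \ref{M_cb-characterization} (finite rank when $a \in C_c(\G)$, $L^2$-compactness when $a \in C_0(\G) \subseteq \mc K(L^2(\G))$), and the reverse implications use the tracial averaging $a_T = (\hat\varphi \otimes \iota)\bigl([(T\otimes\iota)\hat W]\hat W^*\bigr)$, which is exactly the quantum form of Haagerup's $\phi(g)=\tau(T(\lambda_g)\lambda_g^*)$ that the paper employs. (Your disposal of item (1) by citing Theorem \ref{injectivity} directly is fine, and arguably cleaner than the paper's multiplier argument, which needs Theorem \ref{disc-amen} to produce the net.) However, two of your specific claims are false, and each breaks a step as written. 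The first is the reduction to $a_i \in C_c(\G)$ in the forward half of (3) by ``truncating by $\sum_{\pi\in F}p_\pi$, a completely contractive operation'': multiplication by a finitely supported central projection is \emph{not} contractive on $M_{cb}A(\G)$. If it were, applying it to the unit multiplier $a=1$ would give $\bigl\|\sum_{\pi\in F}p_\pi\bigr\|_{M_{cb}A(\G)}\le 1$ for every finite $F$, and the net $\bigl(\sum_{\pi\in F}p_\pi\bigr)_F \subset C_c(\G)\subseteq A(\G)$ would then witness $\Lambda_{cb}(\G)=1$ for \emph{every} discrete quantum group, contradicting the existence of non-weakly-amenable discrete groups \cite{Ha86}; indeed, the unboundedness of $\|p_k\|_{M_{cb}A(\F O_F)}$ is precisely why Theorem \ref{freslon-cbnorm} is needed in Section \ref{app}. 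The correct reduction uses instead that $C_c(\G)\cap A(\G)$ is $\|\cdot\|_{A(\G)}$-dense in $A(\G)$ and that the inclusion $A(\G)\hookrightarrow M_{cb}A(\G)$ is contractive, so an $A(\G)$-small perturbation of each $a_i$ lands in $C_c(\G)$ while preserving both the uniform cb bound and the approximate-identity property.

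The second gap is your assertion that traciality makes the averaging send finite-rank maps to \emph{finitely supported} multipliers $a_T \in C_c(\G)$. This is false: writing $T=\sum_r \tau_r(\cdot)x_r$, one gets
\[
a^\pi_{ij}=\sum_{r}\sum_{k=1}^{n(\pi)}\tau_r(u^\pi_{ik})\,\la\Lambda_{\hat\varphi}(x_r)|\Lambda_{\hat\varphi}(u^\pi_{jk})\ra ,
\]
which is nonzero for infinitely many $\pi$ in general; already for $\G=\Z$, $\hG=\T$, a rank-one map $f\mapsto \tau(f)g$ averages to the pointwise product of the Fourier coefficient sequences of $\tau$ and $g$, which is merely square-summable. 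What traciality actually yields is $a_T\in\mathfrak N_\varphi$, and one then needs the unimodular identity $\mathfrak N_\varphi=\mathfrak N_\varphi^*\subseteq A(\G)$ (the paper's Remark \ref{finite-rank}) to place $a_T$ in $A(\G)$. This is not cosmetic: membership of the averaged net in $A(\G)\cong L^1(\hG)$ is exactly what the definition of weak amenability requires, and it is also what Theorem \ref{co-amen-characterization}(6) requires to conclude co-amenability of $\hG$ (hence amenability of $\G$), so both reverse implications (1) and (3) have a genuine hole at this point. Both gaps are repairable — by the density argument and by Remark \ref{finite-rank}, respectively — but these are precisely the two technical points your outline replaces with incorrect shortcuts.
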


\subsection{Some preparations}

Before beginning the proof of Theorem \ref{discrete-AP}, let us first recall that for a discrete quantum group, we have
\[
L^\infty(\G) = \prod_{\pi \in \text{Irr}(\hG)} \mc B(H_\pi), \qquad C_0(\G) =\bigoplus_{\pi \in \text{Irr}(\hG)}^{c_0} \mc B(H_\pi),\]
 \[
L^2(\G) = L^2(\hG) = \bigoplus_{\pi \in \text{Irr}(\hG)}^{\ell^2} L^2(\mc B(H_\pi), d(\pi)\text{Tr}(Q_\pi\cdot)),
\] 
and the left regular representation of $\hG$ is given by
\[\hat W= \prod_{\pi \in \text{Irr}(\hG)} \sum_{i,j} u_{ij}^\pi\otimes e_{ij}^\pi,\]
where $(e_{ij}^{\pi})_{1 \le i,j \le n(\pi)}$ denotes a fixed system of matrix units for $\mc B(H_\pi)$ and $U^\pi = [u_{ij}^{\pi}] \in M_{n(\pi)}(C(\G)) = C(\G) \otimes \mc B(H_\pi)$ is a corresponding representative of $\pi$.  Also recall that
\[
C_c(\G)  = \bigoplus_{\pi \in \text{Irr}(\hG)}\mc B(H_\pi) \subset L^\infty(\G).
\] 
denotes the $\sigma$-weakly dense subspace of finitely supported elements.  A standard but crucial observation regarding $C_c(\G)$ is that we have the norm-dense inclusions 
\[
\Lambda_\varphi(C_c(\G)) \subseteq L^2(\G), \quad \& \quad C_c(\G) \subseteq A(\G)
\]
for any discrete quantum group $\G$.  In particular, if $a = \hat \lambda(\omega_{\xi,\eta}) \in A(\G)$ with $\xi,\eta \in L^2(\G)$ and  $\|a\|_{A(\G)} = \|\xi\|_2 \|\eta\|_2$, then for any $\epsilon >0$ we can find $x,y \in C_c(\G)$ such that $\xi' = \Lambda_\varphi(x)$ and $\eta' = \Lambda_\varphi(y)$ satisfy
\[
\|\xi - \xi'\|_2 < \frac{\epsilon}{2\|\eta\|_2} \quad \&\quad \|\eta - \eta'\|_2 < \frac{\epsilon}{2\|\xi\|_2}.
\]
Setting $b= \hat \lambda(\omega_{\xi',\eta'})$, we then have $b \in A(\G)\cap C_c(\G)$ and 
\[
\|a-b\|_{A(\G)} \le \|\xi-\xi'\|_2\|\eta\|_2 + \|\xi\|_2\|\eta-\eta'\|_2 < \epsilon.
\]
Note that the above type of estimate easily implies that any normalized completely positive definite function in  $A(\G)$ can be $A(\G)$-norm approximated by normalized completely positive definite function in $C_c(\G) \cap A(\G)$. 

Now suppose $L = L^{(a)} \in \mc {CB}_\sigma(L^\infty(\hG))$ is the adjoint of a cb multiplier $a = (a^\pi)_{\pi \in \text{Irr}(\hG)} \in M_{cb}A(\G)\subset L^\infty(\G)$, then the defining formula $(L \otimes \iota)\hat W = (1 \otimes a)\hat W$ tells us that
\[
(L \otimes \iota)\sum_{i,j} u^\pi_{ij} \otimes e^\pi_{ij} = \sum_{i,j} u_{ij}^\pi \otimes a^\pi e^\pi_{ij} = \sum_{i,j,k}u^\pi_{ij} \otimes a_{ki}^\pi e^{\pi}_{kj} = \sum_{kj} \sum_{i}a^{\pi}_{ki}u^\pi_{ij} \otimes e^\pi_{kj} \qquad (\pi \in \text{Irr}(\hG)).
\]  
In other words, $L$ acts by the formula
\begin{align} \label{mult-formula}
L(u_{ij}^\pi) = \sum_{k=1}^{n(\pi)}a^\pi_{ik}u^\pi_{kj}  \qquad (1 \le i,j \le n(\pi), \ \pi \in \text{Irr}(\hG)).
\end{align}

\subsection{Proof of Theorem \ref{discrete-AP}}
\subsubsection{For general discrete $\G$}
We give here only a general outline of the proof of the theorem.  For further details, we refer the reader to \cite{Br12,Fr12,DaFiSkWh16,KrRu99}.

{\it Implication} (3): Suppose we have a  net $(a_i)_i \subset A(\G)$ such that 
\[
\|a_ib - b\|_{A(\G)} \to 0 \qquad (b \in A(\G)) \quad \& \quad \limsup_i\|a_i\|_{M_{cb}A(\G)}=C < \infty.
\]
Since $\|a_i\|_{A(\G)} \le \|a_i\|_{M_{cb}A(\G)}$ for each $i$, we can, without loss of generality, assume  that $(a_i)_i \subset C_c(\G)\cap A(\G)$.

Consider now the adjoint maps $(L_i)_i \subset \mc{CB}_\sigma(L^\infty(\hG))$, where $L_i = L^{(a_i)}$.  Then $\limsup_i \|L_i\|_{cb} = C < \infty$, and the condition $\|a_ib - b\|_{A(\G)} \to 0 \quad (b \in A(\G)) $ implies by duality that $L_i \to \iota_{L^\infty(\hG)}$ pointwise $\sigma$-weakly.  Finally formula \eqref{mult-formula} combined with the fact that $a_i \in C_c(\G)$ implies that each $L_i$ is finite rank.   Thus $\Lambda_{cb}(L^\infty(\hG)) < \Lambda_{cb}(\G)$.  For the C$^\ast$-variant, we just need to make the additional observation that $L_i \to \iota_{L^\infty(\hG)}$ pointwise $\sigma$-weakly implies in particular that $a_i \to 1$ pointwise in $L^\infty(\G)$, which then implies that $\|L_ix-x\|_{C(\hG)} \to 0$ for all $x \in \mc O(\hG)$.   Since $\mc O(\hG)$ is norm dense in $C(\hG)$, we see that $L_i|_{C(\hG)} \to \iota_{C(\hG)}$ pointwise in norm.  In particular, $\Lambda_{cb}(C(\hG)) \le \Lambda_{cb}(\G)$.  

{\it Implication} (1):  Thus is just a variant of the above argument, except now we have the additional fact that our net $(a_i)_i \subseteq A(\G)\cap C_c(\G)$ consists of normalized completely positive definite functions.  This additional condition forces $L_i$ to be completely positive and unital, and we are done.

{\it Implication} (2):  This is just another minor modification of ideas in the previous two implications.  Suppose $(a_i)_i \subseteq C_0(\G)$ is a bounded approximate identity consisting of normalized completely positive definite functions.  Then $L_i$ is a unital completely positive map and it is $\hat \varphi$-preserving, since 
\[
\hat \varphi(L_iu_{kl}^\pi) = \sum_r a_{i,kr}^\pi \hat \varphi(u^\pi_{rl}) = \delta_{\pi,\pi_0} = \hat \varphi(u_{kl}^\pi) \qquad (1 \le k,l \le n(\pi), \ \pi \in \text{Irr}(\hG)),
\]
where $\pi_0$ is the (equivalence class of the) trivial representation of $\hG$.  Finally, we observe that the $L^2$-extension $\Phi_i$ of $L_i$ is precisely $a_i \in C_0(\G) \subset \mc K(L^2(\G))  \subset \mc B(L^2(\G))$.  In particular, $\Phi_i$ is compact, and $\Phi_i \to \iota_{L^2(\G)}$ strongly, since $(a_i)_i$ is a bounded approximate identity for $C_0(\G)$. 

\begin{rem}
Before going to the unimodular case, we would like to point out that recently a remarkable extension of the above implication (2) has been obtained by Okayasu, Ozawa and Tomatsu \cite{OkOzTo15}.  Namely, for any locally compact quantum group $\G$ the Haagerup property implies that $L^\infty(\hG)$ has the Haagerup approximation property.  This result is even new for classical locally compact groups.   
\end{rem}

\subsubsection{The unimodular case}  Now suppose that $\G$ is unimodular.  This is equivalent to saying that the dual Haar state $\hat \varphi:L^\infty(\hG) \to \C$ is tracial.  To get the converse of the implications $(1)-(3)$ above, we need a way of manufacturing a completely bounded multiplier on $\G$ from an arbitrary completely bounded map $T \in \mc {CB}(L^\infty(\hG))$.  When $\G$ is unimodular, this is possible thanks to an ``averaging trick''
introduced by Haagerup \cite{Ha86} (for discrete groups) and later extended to unimodular discrete quantum groups by Kraus and Ruan  \cite{KrRu99}.  

The idea of this averaging trick is as follows:  Given a completely bounded map $T:C(\hG) \to L^\infty(\hG)$\footnote{We choose these domains/ranges to be as general as possible}, define $a = a_T \in L^\infty(\G)$ by 
\[
a = (\hat \varphi \otimes \iota)([(T \otimes \iota)\hat W]\hat W^*).
\]
In other words, if we write $a = (a^\pi)_{\pi \in \text{Irr}(\hG)}$ with $a^\pi \in \mc B(H_\pi)$, then 
\begin{align}\label{averaging-formula}a^\pi_{ij} = \sum_{k=1}^{n(\pi)}\hat\varphi((Tu_{ik}^\pi)(u_{jk}^\pi)^*)= \sum_{k=1}^{n(\pi)}\la\Lambda_{\hat\varphi}(Tu_{ik}^\pi)|\Lambda_{\hat\varphi}u_{jk}^\pi)\ra \qquad (1 \le i,j \le n(\pi), \ \pi \in \text{Irr}(\hG)).
\end{align} 
Our claim is that in fact $a \in M_{cb}A(\G)$ with $\|a\|_{M_{cb}A(\G)} \le \|T\|_{cb}$.  To verify this, recall that since $\hat\varphi$ is a faithful tracial state, there exists a unique normal faithful $(\hat \varphi \otimes \hat \varphi)$-preserving conditional expectation
\[
E:L^\infty(\hG) \overline{\otimes} L^\infty(\hG) \to  \hat\Delta(L^\infty(\hG)),
\]
which at the $L^2$-level just corresponds to the orthogonal projection $P:L^2(\hG) \otimes L^2(\hG) \to L^2(\hat\Delta(L^\infty(\hG)))$. In particular, it follows from the Schur orthogonality relations for matrix elements of irreducible unitary representations of $\hG$ that the following formula for $E$ holds:
\begin{align}\label{E-formula}
E(u_{ij}^\pi \otimes u^\sigma_{kl}) = \frac{\delta_{\pi,\sigma}\delta_{j,k}}{n(\pi)}\Big(\sum_{1 \le r \le n(\pi)} u_{ir}^\pi \otimes u_{rl}^\pi\Big) =  \frac{\delta_{\pi,\sigma}\delta_{j,k}}{n(\pi)}\hat \Delta(u^\pi_{il}) \qquad (\pi, \sigma \in\text{Irr}(\hG)).
\end{align}
Now define a linear map
\[
L = L_T:C(\hG) \to L^\infty(\hG); \qquad L = \hat \Delta^{-1}\circ E \circ (T \otimes \iota)\circ \hat \Delta.
\] 
Evidently $L$ is completely bounded with $\|L\|_{cb} \le \|T\|_{cb}$, and $L$ is completely positive/unital/trace-preserving whenever $T$  has these properties.  Let us now evaluate $Lu_{ij}^\pi$, for some $\pi \in \text{Irr}(\hG)$ and $1 \le i,j \le n(\pi)$.  First note that 
\begin{align*}
 (T\otimes \iota) \hat \Delta(u_{ij}^\pi) &=\sum_{k=1}^{n(\pi)}Tu_{ik}^\pi \otimes u_{kj}^\pi \\
&=\sum_{k=1}^{n(\pi)}\sum_{\sigma \in \text{Irr}(\hG)} \sum_{1 \le r,s \le n(\sigma)} n(\sigma) \la \Lambda_{\hat \varphi}(Tu_{ik}^\pi ) | \Lambda_{\hat \varphi}(u^\sigma_{rs}) \ra u_{rs}^\sigma \otimes u_{kj}^\pi
\end{align*}
Where the last equality follows from the fact that $(\sqrt{n(\sigma)}\Lambda_{\hat \varphi}(u_{rs}^\sigma))_{\sigma, r,s}$ is an orthonormal basis for $L^2(\hG)$ (since  $\hat \varphi$ is tracial), and thus 
\[
Tu_{ik}^\pi = \sum_{\sigma \in \text{Irr}(\hG)} \sum_{1 \le r,s \le n(\sigma)} n(\sigma) \la \Lambda_{\hat \varphi}(Tu_{ik}^\pi ) | \Lambda_{\hat \varphi}(u^\sigma_{rs}) \ra u_{rs}^\sigma \qquad (\text{with respect to $L^2$-convergence}).
\]
From these equations we then get
\begin{align*}
Lu_{ij}^\pi& = \hat \Delta^{-1} \circ E\Big(\sum_{k=1}^{n(\pi)}\sum_{\sigma \in \text{Irr}(\hG)} \sum_{1 \le r,s \le n(\sigma)} n(\sigma) \la \Lambda_{\hat \varphi}(Tu_{ik}^\pi ) | \Lambda_{\hat \varphi}(u^\sigma_{rs}) \ra u_{rs}^\sigma \otimes u_{kj}^\pi\Big) \\
&=  \hat \Delta^{-1} \circ  \sum_{k=1}^{n(\pi)}\sum_{\sigma \in \text{Irr}(\hG)} \sum_{1 \le r,s \le n(\sigma)} n(\sigma) \la \Lambda_{\hat \varphi}(Tu_{ik}^\pi ) | \Lambda_{\hat \varphi}(u^\sigma_{rs}) \ra  \frac{\delta_{\pi,\sigma}\delta_{s,k}}{n(\pi)}\hat \Delta(u^\pi_{rj}) \\
&= \sum_{1 \le r,k \le n(\pi)} \la \Lambda_{\hat \varphi}(Tu_{ik}^\pi ) | \Lambda_{\hat \varphi}(u^\sigma_{rk}) \ra u^\pi_{rj} \\
&= \sum_{1 \le r \le n(\pi)}a^\pi_{ir}u^\pi_{rj}
\end{align*}
Comparing this calculation with formula \eqref{mult-formula}, we conclude that $a \in M_{cb}A(\G)$ and $L = L^{(a)}$ is the adjoint of the multiplier $a$, and in particular $\|a\|_{M_{cb}A(\G)} \le \|T\|_{cb}$.

\begin{rem}\label{finite-rank}
In the case that our map $T \in \mc {CB}(C(\hG), L^\infty(\hG))$ is finite rank, we can say even more about the resulting multiplier $a$.  Namely, that $a \in A(\G)$.  Indeed, if $T$ is finite rank, then there exists $x_1, \ldots, x_m \in L^\infty(\hG)$ and $\tau_1, \ldots , \tau_m \in C(\hG)^*$ such that 
\[T(x) = \sum_{1 \le r \le m} \tau_r(x) x_r \qquad (x \in C(\hG)),\]
which yields
\[a^\pi_{ij} = \sum_{1 \le r\le m} \sum_{k = 1}^{n(\pi)}\tau_r(u_{ik}) \la\Lambda_{\hat \varphi} (x_r)|\Lambda_{\hat\varphi}(u_{jk}) \ra.\]
This suggests we define $b_r, c_r \in L^\infty(\G)$ by 
\[b_r^\pi = [\tau_r(u_{ij}^\pi)]_{ij}, \quad  \& \quad c_r^\pi =  [\la\Lambda_{\hat\varphi} (x_r)|\Lambda_{\hat\varphi}(u_{ji}^\pi)\ra]_{ij},  \] 
so that $a = \sum_rb_rc_r$.  Moreover, an easy calculation using the Schur orthogonality relations shows that 
\begin{align*}
\|\Lambda_{\varphi}(c_r)\|_{L^2(\G)}^2 &=\sum_{\pi} n(\pi)\text{Tr}((c_r^\pi)^*c_r^\pi)= \sum_\pi n(\pi) \sum_{1 \le i,j \le n(\pi)} |\la\Lambda_{\hat\varphi} (x_r)|\Lambda_{\hat\varphi}(u_{ji}^\pi)\ra|^2 \\
&=\|\Lambda_{\hat\varphi} x_r\|_{L^2((\hG)}^2  < \infty \qquad (1 \le r \le m).
\end{align*}
In particular, 
\[\|\Lambda_\varphi(a)\|_{L^2(\G)} \le \sum_{1 \le r\le m}\|b_r\|_\infty \|\Lambda_\varphi(c_r)\|_{L^2(\G)}< \infty  \implies a \in \mathfrak N_\varphi.\]
But in the unimodular case, $\mathfrak N_\varphi = \mathfrak N_\varphi^* \subseteq A(\G)$, 
more explicitly, if $p_0 \in L^\infty(\G)$ denotes the rank one projection corresponding to the trivial representation of $\hG$, and $S(=R)$ denotes the unitary antipode for $L^\infty(\G)$,  then also $Sa \in \mathfrak N_\varphi = \mathfrak N_\varphi^*$ and  
\[
a = S^2(a) = S((\iota \otimes \omega_{\Lambda_\varphi(p_0), \Lambda_\varphi((Sa)^*)})W) = (\iota \otimes \omega_{\Lambda_\varphi(p_0), \Lambda_\varphi((Sa)^*)})W^* = \hat \lambda(\omega_{\Lambda_\varphi(p_0), \Lambda_\varphi((Sa)^*)}) \in A(\G).
\]  For details, see for example \cite[Proposition 3.9]{BrRu14}.
\end{rem}

Let us now use our averaging trick to sketch  the the following implications:
\begin{enumerate}
\item[(a)] $L^\infty(\hG)$ injective $\implies$ $\G$ is amenable.
\item[(b)] $\Lambda_{cb}(L^\infty(\hG)) \ge \Lambda_{cb}(\G)$.
\item[(c)] $L^\infty(\hG)$ has the Haagerup property $\implies$ $\G$ has the Haagerup property.
\end{enumerate} 
(Note: The C$^\ast$-variants of (a),(b) require only minor modifications which are left to the reader).

(a).   Since $L^\infty(\hG)$ is injective,  there exists a net of normal finite rank UCP maps $(T_i) \subset \mc {CB}_\sigma(L^\infty(\hG))$ such that 
$T_i \to \iota_{L^\infty(\hG)}$ pointwise $\sigma$-weakly.  Let $(a_i)_i \subset A(\G)$ 
be the corresponding net of normalized completely positive definite functions obtained by the averaging trick.  Since $T_i \to \iota_{L^\infty(\hG)}$ pointwise $\sigma$-weakly, a comparison with formula \eqref{averaging-formula} shows that $a_i^\pi \to 1 \in \mc B(H_\pi)$ in norm for all $\pi \in \text{Irr}(\hG)$.  In particular, 
\[
\|a_ib - b\|_{A(\G)} \to 0 \qquad (b \in C_c(\G)).
\]  But since $C_c(\G)$ is dense in $A(\G)$, this implies that $(a_i)_i$ is a bounded approximate identity for $A(\G)$.

(b).   The argument is almost identical to (a):  If there exists a net of normal finite rank completely bounded maps $(T_i) \subset \mc {CB}_\sigma(L^\infty(\hG))$ such that 
$T_i \to \iota_{L^\infty(\hG)}$ pointwise $\sigma$-weakly, the corresponding net $(a_i)_i \subseteq A(\G)$ will be a left approximate identity for $A(\G)$ with 
\[
\limsup_i\|a_i\|_{M_{cb}A(\G)} \le \limsup_{i}\|T_i\|_{cb}.
\]
Thus $\Lambda_{cb}(\G) \le \Lambda_{cb}(L^\infty(\hG))$.

(c).  This is yet another variant of (a):  Since $L^\infty(\hG)$ has the Haagerup approximation property, then we can find a net of UCP $\hat \varphi$-preserving maps $(T_i) \subset \mc {CB}_\sigma(L^\infty(\hG))$ such that the corresponding $L^2$-extensions $\Phi_i$ are compact, and $\Phi_i \to \iota_{L^2(\hG)}$  pointwise in norm.  Again, the point-norm convergence of $\Phi$, when compared with formula \eqref{averaging-formula}, tells us that $a^\pi_i \to 1$
for all $\pi \in \text{Irr}(\hG)$, and therefore 
\[
\|a_ib - b\|_{C_0(\G)} \to 0 \qquad (b \in C_0(\G)).
\]
Moroever, the $L^2$-compactness of $\Phi_i$ turns out  to imply each $a_i \in C_0(\G)$. See \cite[Theorem 6.7]{DaFiSkWh16} for details. 

\begin{rem} From Theorem \ref{discrete-AP}, we see that for unimodular discrete quantum
groups, there is a tight connection between approximation properties of $\G$ in terms of completely bounded Fourier multipliers and various approximation properties for $C(\hG)$ and $L^\infty(\hG)$.  In the non-unimodular case, this connection only goes one way and it is an important problem in quantum group theory to understand to what extent the unimodular results persist.  More precisely, we conclude this section with the following open questions.
\end{rem}

\begin{quest}
Let $\G$ be a non-unimodular discrete quantum group.  Do we then have:
\begin{enumerate}
\item $C(\hG)$ is nuclear $\implies$ $\G$ is amenable?  $L^\infty(\hG)$ is injective $\implies$ $\G$ is amenable?
\item $C(\hG)$ has the CBAP $\implies$ $\G$ is weakly amenable?  $L^\infty(\hG)$ has the w$^\ast$CBAP $\implies$ $\G$ is weakly amenable?
\item $L^\infty(\hG)$ has the Haagerup approximation property $\implies \G$ has the Haagerup property?
\end{enumerate}
\end{quest}

\section{Central approximation properties for discrete quantum groups} \label{CAP}

Let $\G$ be a (discrete) quantum group.  A multiplier $a \in MA(\G)$ is called \textit{central} if $a \in ZL^\infty(\G)$, the center of $L^\infty(\G)$.  I.e., if $L^\infty(\G) = \prod_{\pi \in \text{Irr}(\hG)} \mc B(H_\pi)$, and $p_\pi \in \mc B(H_\pi)$ is the central projection corresponding to the $\pi$th component of $L^\infty(\G)$.  Then $a \in ZMA(\G) = MA(\G) \cap ZL^\infty(\G)$ if and only if 
\[
a = (c_\pi p_\pi)_{\pi \in \text{Irr}(\hG)} \qquad (\text{with }c_\pi \in \C).
\] 
In a similar fashion, we define $ZM_{cb}A(\G):=M_{cb}A(\G)\cap ZL^\infty(\G)$.
   
 When studying approximation properties for $\G$ such as amenability, weak amenability, or the Haagerup property, we can impose an additional requirement on the Fourier multipliers that implement these properties.   Namely that they be central.   This idea naturally leads to the notion of {\it central approximation properties} for discrete quantum groups.  More precisely, we have the following definition.

\begin{defn}
A discrete quantum group $G$ is {\it centrally amenable/centrally weakly amenable/has the central Haagerup property} if: 
\begin{enumerate}
\item $\G$ is amenable/weakly amenable/has the Haagerup property, and
\item there exists a net $(a_i)_i \subset ZM_{cb}A(\G)$ that implements the corresponding approximation property from (1).
\end{enumerate} 
\end{defn}

\begin{rem}
For a discrete quantum group $\G$
, we evidently have that the central version of an approximation property for $\G$ implies the corresponding non-central one.  In particular, in the case of weak amenability, we can define a {\it central Cowling-Haagerup constant}
\[
Z\Lambda_{cb}(\G): = \inf\{\limsup_i \|a_i\|_{M_{cb}A(\G)}: (a_i)_i \subset A(\G) \cap ZL^\infty(\G) \text{ is a left approx. unit for }A(\G)\},
\] 
and we have
\[Z\Lambda_{cb}(\G) \ge \Lambda_{cb}(\G).\]
\end{rem}
In Section  \ref{CAP-monoidal} we will see that there are amenable discrete quantum groups $\G$ that fail to be centrally amenable.  
However, in the unimodular case, it turns out that $\G$ has a given approximation property if and only if it has the corresponding central version.   

\begin{thm}[\cite{Fr13}] \label{unimodular}
Let $\G$ be a unimodular discrete quantum group.  Then:
\begin{enumerate}
\item $\G$ is centrally amenable $\iff$ $\G$ is amenable.
\item $\G$ is centrally weakly amenable $\iff$ $\G$ is weakly amenable (and $Z\Lambda_{cb}(\G) = \Lambda_{cb}(\G)$).
\item $\G$ has the central Haagerup property $\iff$ $\G$ has the Haagerup property.
\end{enumerate}
\end{thm}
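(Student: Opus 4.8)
The two implications in each of (1)--(3) split into a trivial direction and a substantive one. Since $ZM_{cb}A(\G)\subseteq M_{cb}A(\G)$, any net of central multipliers implementing a central approximation property \emph{a fortiori} implements the ordinary one, so the ``central $\Rightarrow$ ordinary'' direction is immediate and $Z\Lambda_{cb}(\G)\ge\Lambda_{cb}(\G)$ is automatic. The whole content is the converse: starting from an arbitrary implementing net $(a_i)_i\subset M_{cb}A(\G)$, I must manufacture a \emph{central} net $(\tilde a_i)_i\subset ZM_{cb}A(\G)$ that still implements the property and, in case (2), does so without increasing the limiting cb-norm. The plan is to introduce a single ``centralisation'' operation and verify it has all the required features. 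Writing $a=(a^\pi)_{\pi\in\Irr(\hG)}$ with $a^\pi\in\mc B(H_\pi)$, I would set
\[
\tilde a=(\tilde a^\pi)_\pi,\qquad \tilde a^\pi=\frac{\Tr_\pi(a^\pi)}{n(\pi)}\,p_\pi\in\C p_\pi .
\]
This is the blockwise trace projection onto the centre $ZL^\infty(\G)$, and here is exactly where unimodularity enters: because $\hat\varphi$ is tracial ($Q_\pi=1$ for all $\pi$), this plain normalised-trace projection coincides with the unique $\varphi$-preserving conditional expectation of $L^\infty(\G)$ onto $ZL^\infty(\G)$, which is what makes it interact correctly with the Haar weights.

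The heart of the matter is the cb-norm bound $\|\tilde a\|_{M_{cb}A(\G)}\le\|a\|_{M_{cb}A(\G)}$, and I would obtain it by exhibiting $\tilde a$ as an average of cb-norm preserving transformations of $a$. Recall from \eqref{mult-formula} that the assignment $a\mapsto L^{(a)}$ is anti-multiplicative ($L^{(a)}L^{(b)}=L^{(ba)}$); consequently, conjugating $a$ by a \emph{coherent} unitary coming from the adjoint action of the compact dual $\hG$ turns $L^{(a)}$ into $\theta\circ L^{(a)}\circ\theta^{-1}$ for a $\ast$-automorphism $\theta$ of $L^\infty(\hG)$ (a translation, in the classical group picture). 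Since $\ast$-automorphisms are complete isometries, each such conjugate of $a$ has the same cb-multiplier norm as $a$. Averaging these conjugates over $\hG$ against the Haar state $\hat\varphi$ --- equivalently, applying $(\iota\otimes\hat\varphi)$ to the adjoint coaction --- produces, by the Schur orthogonality relations, precisely the blockwise trace $\tilde a^\pi=\tfrac{\Tr_\pi(a^\pi)}{n(\pi)}p_\pi$. Unimodularity is essential again here: it is what forces the quantum average of a conjugation to collapse to the \emph{plain} (untwisted) trace and keeps the implementing operators unitary. The bound then follows by convexity, since $\tilde a$ lies in the closed convex hull of the isometric conjugates of $a$.

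Next I would check that centralisation preserves the defining features of each property. By the averaging description, $a\mapsto\tilde a$ is a limit of convex combinations of conjugations by $\ast$-automorphisms followed by the Haar projection; since each ingredient preserves unitality and complete positivity, $\tilde a$ is a normalised completely positive definite function whenever $a$ is, giving (1). For (3) one notes in addition that $\|\tilde a^\pi\|\le\|a^\pi\|$, so centralisation maps $C_0(\G)$ into $C_0(\G)$ and preserves $\hat\varphi$-invariance, hence carries a bounded approximate identity of normalised completely positive definite functions in $C_0(\G)$ to another one. In all three cases the approximate-identity property survives: any left approximate identity for $A(\G)$ forces the blockwise convergence $a_i^\pi\to 1_{\mc B(H_\pi)}$ (test against the central projections $p_\pi\in C_c(\G)\cap A(\G)$, using that the $A(\G)$-norm dominates the norm on each finite-dimensional block), whence $\tilde a_i^\pi=\tfrac{\Tr_\pi(a_i^\pi)}{n(\pi)}p_\pi\to p_\pi=1_{\mc B(H_\pi)}$; since $C_c(\G)$ is dense in $A(\G)$ this makes $(\tilde a_i)_i$ again an approximate identity. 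Combining this with the cb-bound yields $Z\Lambda_{cb}(\G)\le\Lambda_{cb}(\G)$ in (2), hence equality, and (1),(3) follow in the same way.

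The main obstacle is precisely the cb-norm step: making rigorous that the quantum adjoint averaging is simultaneously a complete contraction on $M_{cb}A(\G)$ and lands in $ZM_{cb}A(\G)$ with value the centre-valued trace. This is exactly the point that breaks without unimodularity, since the $Q_\pi$-twist destroys both the unitarity of the conjugating operators and the collapse to the plain trace --- consistent with the fact (seen in Section~\ref{CAP-monoidal}) that amenable non-unimodular discrete quantum groups can fail to be centrally amenable. A secondary, purely technical point is to justify the averaging integral itself, for instance via weak$^\ast$-compactness of the unit ball of the dual space $M_{cb}A(\G)$, so that the average is a well-defined element of $M_{cb}A(\G)$ to which the convexity estimate applies.
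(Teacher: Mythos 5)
Your overall architecture matches the paper's: the ``central $\Rightarrow$ ordinary'' direction is trivial, the substance is a centralization map $a \mapsto \tilde a$ whose blocks are normalized traces, unimodularity enters through traciality of $\hat\varphi$, and your verification that approximate identities and complete positive definiteness survive centralization (blockwise convergence tested against $p_\pi$, density of $C_c(\G)$ in $A(\G)$) is fine. The genuine gap is exactly the step you yourself flag as ``the main obstacle'': the bound $\|\tilde a\|_{M_{cb}A(\G)} \le \|a\|_{M_{cb}A(\G)}$. Your proposed proof --- writing $\tilde a$ as an average of conjugates of $a$ by $\ast$-automorphisms $\theta$ of $L^\infty(\hG)$ coming from the adjoint action of $\hG$ --- has no quantum counterpart. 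A genuine compact quantum group $\hG$ has no points to conjugate by: the adjoint action is only a \emph{coaction}, not a family of automorphisms, and the only elements of $L^\infty(\G)$ whose multiplier action is an automorphism of $L^\infty(\hG)$ are group-like unitaries, which may not exist beyond the trivial one. Consequently ``$\tilde a$ lies in the closed convex hull of the isometric conjugates of $a$'' is not a meaningful statement, and the convexity/weak$^\ast$-compactness argument cannot be run. It is true that applying $(\iota \otimes \hat\varphi)$ to the adjoint coaction $a \mapsto W(a\otimes 1)W^*$ reproduces your formula $\tilde a^\pi = \Tr_\pi(a^\pi)p_\pi/n(\pi)$ (in the unimodular case, by Schur orthogonality), but that map is merely UCP on $L^\infty(\G)$: it controls $\|\tilde a\|_{L^\infty(\G)}$, not the cb-multiplier norm, which lives on the dual side.

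The paper closes this gap by working entirely on $L^\infty(\hG)$. Traciality of $\hat\varphi$ (this is where unimodularity is used, via Takesaki's theorem) yields the $\hat\varphi\otimes\hat\varphi$-preserving conditional expectation $E: L^\infty(\hG)\overline{\otimes}L^\infty(\hG) \to \hat\Delta(L^\infty(\hG))$ of \eqref{E-formula}, and one sets
\[
\tilde L \;=\; \hat\Delta^{-1}\circ E \circ \big((\hat R \circ L^{(a)} \circ \hat R)\otimes\iota\big)\circ\hat\Delta ,
\]
where $\hat R$ is the unitary antipode. Each factor is a complete isometry or complete contraction (and the whole composition preserves unitality and complete positivity), so $\|\tilde L\|_{cb} \le \|L^{(a)}\|_{cb}$; a direct computation on matrix coefficients then identifies $\tilde L = L^{(\tilde a)}$ with $\tilde a^\pi = \Tr_\pi(a^{\bar\pi})p_\pi/n(\pi)$. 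Note that the $\hat R$-twist is not decoration: by the multiplier covariance relation $(L^{(a)}\otimes\iota)\circ\hat\Delta = \hat\Delta\circ L^{(a)}$ and $E\circ\hat\Delta = \hat\Delta$, the untwisted composition $\hat\Delta^{-1}\circ E\circ(L^{(a)}\otimes\iota)\circ\hat\Delta$ is just $L^{(a)}$ back again --- an untwisted ``average'' does nothing --- which is also why the correct centralized blocks carry $\Tr_\pi(a^{\bar\pi})$ rather than your $\Tr_\pi(a^{\pi})$ (a harmless difference for the theorem, since both nets converge to $1$ blockwise). Without this construction, or an equivalent substitute, your proof of the key estimate is incomplete.
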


\begin{proof}
The key idea is to start with an arbitrary element $a \in M_{cb}A(\G)$ and construct from $a$ a new multiplier $\tilde{a} \in ZM_{cb}A(\G)$ that retains the ``good'' properties of the original multiplier (i.e., positive definiteness, cb norm bounds, belonging to $C_0(\G)$ or $A(\G)$, ...).  With this in mind, fix $a \in M_{cb}A(\G)$ and consider the element $\tilde a  = (\tilde{a}^\pi)_{\pi \in \text{Irr}(\hG)}\in ZL^\infty(\G)$ given by 
\[
\tilde{a}^\pi = \frac{\text{Tr}_\pi(a^{\overline{\pi}})}{n(\pi)}p_\pi \qquad (\pi \in \text{Irr}(\hG)).
\] 
Let $\hat R$ denote the unitary antipode of $L^\infty(\hG)$.  If we fix our representatives  $(U^\pi)_{\pi \in \text{Irr}(\hG)}$ of $\text{Irr}(\hG)$ so that $U^{\bar \pi} = \overline{U^\pi}$.  Then $\hat R$ is a $\sigma$-weakly continuous $\ast$-antiautomorphism of $L^\infty(\hG)$ satisfying
\[\hat R(u_{ij}^\pi)= (u_{ji}^\pi)^* = u_{ji}^{\bar \pi} \qquad (\pi \in \text{Irr}(\hG), \ 1 \le i,j \le n(\pi)),\]

Define a $\sigma$-weakly continuous map 
\begin{align} \label{centralization}\tilde{L} \in \mc {CB}_\sigma(L^\infty(\hG)); \qquad \tilde{L} =  \hat \Delta^{-1} \circ E \circ ((\hat R \circ L^{(a)} \circ \hat R) \otimes \iota) \hat \Delta,
\end{align}
where $E: L^\infty(\hG) \overline{\otimes}L^\infty(\hG) \to  \hat \Delta( L^\infty(\hG))$ is the conditional expectation from \eqref{E-formula} and $L^{(a)}$ is the adjoint of the left multiplier $a$.    
  Now, since $\hat R$ is a $\ast$-antiautomorphism, \[\|\hat R \circ L^{(a)} \circ \hat R \|_{cb} = \|m_a\|_{cb},\] and therefore $\|\tilde{L}\|\le \|L^{(a)}\|_{cb}$.

We now claim that $\tilde{L} = L^{\tilde{a}}$, so $\tilde a \in ZM_{cb}A(\G)$ and $\|\tilde{a}\|_{M_{cb}A(\G)} \le \|a\|_{M_{cb}A(\G)}$.  To check this, we fix a matrix element $u_{ij}^\pi \in \mc O(\hG)$ and calculate 
\begin{align*}
\tilde{L}u_{ij}^\pi &= \hat \Delta^{-1} \circ E \circ (\hat R \circ m_a \circ \hat R \otimes \iota) \hat \Delta(u_{ij}^\pi)\\
&=\hat \Delta^{-1} \circ E\Big(\sum_{k,l}a^{\bar \pi}_{kl}R(u_{li}^{\bar\pi})\otimes u_{kj}^\pi\Big) \\
&= (\hat \Delta)^{-1} \circ E\Big(\sum_{k,l}a^{\bar \pi}_{kl}u_{il}^{\pi}\otimes u_{kj}^\pi\Big) \\
&=\sum_{k,l}\delta_{k,l}a^{\bar \pi}_{kl}\frac{u_{ij}^{\pi}}{n(\pi)}=\frac{ \text{Tr}(a^{\bar{\pi}})}{n(\pi)}u_{ij}^\pi\\
&=L^{(\tilde a)}u_{ij}^\pi.
\end{align*}

Now that we have a natural method for constructing elements of $ZM_{cb}A(\G)$ from elements of $M_{cb}A(\G)$, the rest of the proof is relatively straightforward:  One simply has to check that if a net $(a_i)_i \subseteq M_{cb}A(\G)$ implements an approximation property for $\G$ such as (weak) amenability/the Haagerup property, then the corresponding net 
$(\tilde{a}_i)_i$ will implement the central version of this approximation property.  We leave the details to the reader
\end{proof}

\subsection{Going beyond the unimodular case} It is natural to wonder whether it is possible  to generalize the centralization procedure for cb multipliers used in the proof of Theorem \ref{unimodular} to general discrete quantum groups.  Looking at the above proof, one can see that a crucial tool available in the unimodular case was the existence of a $\hat \varphi \otimes \hat \varphi$-preserving conditional expectation $E: L^\infty(\hG) \overline{\otimes} L^\infty(\hG) \to \hat \Delta(L^\infty(\hG))$, from which we could build a completely positive right inverse $\hat \Delta^\sharp := \hat \Delta^{-1} \circ E$ for the coproduct $\hat \Delta$. 
When $\G$ is not unimodular, it turns out that there can never exist such a $\hat \varphi \otimes \hat \varphi$-preserving conditional expectation $E$, since by \cite{Ta72}, this can happen if and only if $\hat \Delta$ intertwines the modular groups $\hat \sigma_t$ and $\hat \sigma_t \otimes \hat \sigma_t$ of $L^\infty(\hG)$ and $L^\infty(\hG) \overline {\otimes} L^\infty(\hG)$, respectively.  But for every discrete $\G$ ,  it turns out that  
\[
\hat \Delta \circ \hat \sigma_t = (\hat \tau_{t} \otimes \hat \sigma_{t})\hat \Delta,
\]
where $\hat \tau_t$ is the scaling group for $L^\infty(\hG)$, and from this formula it follows that $\hat \Delta$ intertwines $\hat \sigma_t$ and $\hat \sigma_t \otimes \hat \sigma_t$ if and only if $\hat \sigma_t = \hat \tau_t = \iota$  (i.e., $\G$ is unimodular!).

However, not all is lost in the non-unimodular case.  Following \cite[Equation (2.2)]{An06}, we can define a normal Haar state-preserving unital completely positive map $\hat \Delta^\sharp:  L^\infty(\hG) \overline{\otimes} L^\infty(\hG) \to \hat L^\infty(\hG)$ by the pairing
\[
(\hat \varphi \otimes \hat \varphi)([\hat\sigma_{i/2} \otimes \hat \sigma_{i/2}](x)\hat \Delta(a)) = \hat \varphi(\hat\Delta^\sharp(x)\hat\sigma_{-i/2}(a)) \qquad (a \in \mc O(\hG) \odot \mc O(\hG), \ x \in \mc O(\hG)).
\]
Unraveling this definition, one obtains the following concrete formula
\[
\hat \Delta^\sharp(u_{ij}^\pi \otimes u_{k,l}^\sigma) = \delta_{j,k}\delta_{\pi,\sigma}\frac{u_{il}^\pi}{d(\pi)} \qquad (\pi,\sigma \in \text{Irr}(\hG), \ 1 \le i,j \le n(\pi), \ 1 \le k,l \le n(\sigma)).
\]
Now, if we use the map $\hat \Delta^\sharp$ in place of $\hat \Delta^{-1} \circ E$ in formula \eqref{centralization}, we obtain from any $a \in M_{cb}A(\G)$ a new element $\tilde a \in ZM_{cb}A(\G)$ with 
\[
\tilde{a}^\pi = \frac{\text{Tr}(a^{\bar \pi})}{d(\pi)}p_\pi \qquad (\pi \in\text{Irr}(\hG)) \quad \& \quad \|\tilde a\|_{M_{cb}A(\G)} \le \|a\|_{M_{cb}A(\G)}.
\] 
Observe that the only difference between this formula and the one in the unimodular case is that the classical dimension $n(\pi)$ is replaced by the quantum dimension $d(\pi)$.  This difference has the unfortunate effect of producing only central multipliers which are ``uniformly far'' from the identity multiplier $1 \in M_{cb}A(\G)$.  Indeed, if $\|a\|_{M_{cb}(\G)} \le C$, then $\|\tilde{a}^\pi\|_\infty \le C\frac{n(\pi)}{d(\pi)}$.  In conclusion, this approach to constructing central multipliers in the non-unimodular setting for the purpose of proving central approximation properties is destined to fail.  

In the next section we will see an alternate approach to establishing central approximation properties for non-unimodular discrete quantum groups using tools from monoidal equivalence for compact quantum groups.

\section{Central approximation properties and monoidal equivalence} \label{CAP-monoidal}

We continue our discussion of central approximation properties for discrete quantum groups by connecting these properties with the powerful notion of {\it monoidal equivalence} for compact quantum groups.  This remarkable connection between these seemingly unrelated concepts has had a profound impact on the operator algebraic theory of compact quantum groups, and has led (for instance) to the proof of central weak amenability and central Haagerup property for all free quantum groups and the duals of quantum automorphism groups of finite dimensional C$^\ast$-algebras.  See \cite{dCFY} and Section \ref{app}. 

We begin with the definition of monoidal equivalence.

\begin{defn}[Bichon-De Rijdt-Vaes \cite{BiDeVa06}]
Let $\G_1, \G_2$ be two compact quantum groups.  We say that $\G_1$ and $\G_2$ are {\it monoidally equivalent}, and write $\G_1 \sim^{mon} \G_2$, if there exists a bijection \[\Phi:\Irr(\G_1) \to \Irr(\G_2)\]  together with linear isomorphisms 
\[\Phi: \Mor(\pi_1 \otimes \ldots \otimes \pi_n, \sigma_1 \otimes \ldots\otimes \sigma_m) \to  \Mor(\Phi(\pi_1)\otimes\ldots \otimes \Phi(\pi_n), \Phi(\sigma_1) \otimes \ldots \otimes \Phi(\sigma_m)) \]
such that $\Phi(1_{\G_1}) = 1_{\G_2}$ ($1_{\G_i}$ being the trivial representation of $\G_i$), and such that for any morphisms $S,T$, 
\begin{align*}
\Phi(S \circ T) &=\Phi(S)\circ \Phi(T) \quad (\text{whenever $S \circ T$ is well-defined})\\
\Phi(S^*)&=\Phi(S)^* \\
\Phi(S \otimes T) &=\Phi(S) \otimes \Phi(T).
\end{align*}
\end{defn} 
A monoidal equivalence between $\G_1$ and $\G_2$ means that the abstract monoidal C$^\ast$-tensor categories $\text{Rep}(\G_i)$  (consisting of all finite dimensional Hilbert space representations of $\G_i$ equipped with their morphism spaces $ \Mor(\cdot, \cdot)$ and the tensor product operation $\otimes$) are isomorphic.  

As a first (and very important) example, we consider monoidal equivalences between free orthogonal quantum groups $O^+_F$, where $F \in \text{GL}_N(\C)$ with $N \ge 2$ and $F \bar F  \in \R 1$.

\begin{thm}[\cite{BiDeVa06}]  For $i=1,2$, fix $F_i \in \text{GL}_{N_i}(\C)$ with $F_i\bar F_i = c_i1$ and $c_i \in \R$.  Then the corresponding free orthogonal quantum groups $O^+_{F_1}$ and $O^+_{F_2}$ are monoidally equivalent if and only if \[\frac{c_1}{\text{Tr}(F_1^*F_1)}  =  \frac{c_2}{\text{Tr}(F_2^*F_2)}. \]
 Moreover, any compact quantum group $\G$ such that $O^+_{F_1} \sim^{mon}\G$ is isomorphic to an $O^+_{F_2}$ of the above form.   
\end{thm}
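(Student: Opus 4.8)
The plan is to use the monoidal equivalence to carry the generating object of $\text{Rep}(O^+_{F_1})$ and its self-duality morphism over to $\G$, to read off a matrix $F_2$ from the transported invariant vector, and finally to reconstruct $\G$ as $O^+_{F_2}$ via Woronowicz's Tannaka--Krein duality. Write $U$ for the fundamental representation of $O^+_{F_1}$ and $t_{F_1} \in H_U \otimes H_U$ for the essentially unique invariant vector spanning $\Mor(1, U \otimes U)$, which corresponds to the matrix $F_1$.

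First I would set $X := \Phi(U) \in \Irr(\G)$. Since $U$ generates $\text{Rep}(O^+_{F_1})$ and $\Phi$ is a monoidal equivalence, $X$ generates $\text{Rep}(\G)$; thus $\G$ is a compact matrix quantum group with fundamental unitary $X = [x_{ij}] \in M_{N_2}(C(\G))$, where $N_2 = \dim H_X$. As $\dim \Mor(1, U \otimes U) = 1$ and $\Phi$ maps this space isomorphically onto $\Mor(1, X \otimes X) \subseteq H_X \otimes H_X$, the vector $t_X := \Phi(t_{F_1})$ spans the $X \otimes X$-invariants and corresponds to a matrix $F_2 \in M_{N_2}(\C)$; since $t_{F_1}$ is nondegenerate and $\Phi$ preserves invertibility of morphisms, $F_2 \in \text{GL}_{N_2}(\C)$. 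Unwinding the invariance of $t_X$ gives precisely $X = F_2 \bar X F_2^{-1}$, which together with unitarity of $X$ is the defining relation of $O^+_{F_2}$. Moreover $X$ is irreducible (being $\Phi(U)$), so $\Mor(X, X) = \C \iota$, and the ``zig-zag'' morphism $(t_X^* \otimes \iota)(\iota \otimes t_X) = \Phi\big((t_{F_1}^* \otimes \iota)(\iota \otimes t_{F_1})\big)$ is therefore a scalar multiple of the identity; since the corresponding scalar for $O^+_{F_1}$ is the real number $c_1$ and $\Phi$ acts as the identity on $\Mor(1,1) = \C$, this scalar is real, i.e. $F_2 \bar F_2 = c_2 1$ with $c_2 \in \R$. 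The universal property of $C^u(O^+_{F_2})$ (it is universal for a unitary $V$ with $V = F_2 \bar V F_2^{-1}$) then yields a surjective Hopf-$\ast$-morphism $\rho : \mathcal O(O^+_{F_2}) \to \mathcal O(\G)$ with $\rho(u_{ij}) = x_{ij}$.

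It remains to prove that $\rho$ is injective, and this is the crux. The surjection $\rho$ exhibits $\G$ as a closed quantum subgroup of $O^+_{F_2}$, so restriction of representations gives inclusions $\Mor_{O^+_{F_2}}(X^{\otimes k}, X^{\otimes l}) \subseteq \Mor_\G(X^{\otimes k}, X^{\otimes l})$ for all $k, l$. Since $\Phi$ preserves both the norm $t_X^* t_X$ and the zig-zag scalar, one finds $\Tr(F_2^* F_2) = \Tr(F_1^* F_1)$ and $c_2 = c_1$, so in particular $c_2/\Tr(F_2^*F_2) = c_1/\Tr(F_1^*F_1)$; hence by the first part of the theorem $O^+_{F_2} \sim^{mon} O^+_{F_1} \sim^{mon} \G$, so all three representation categories are the \emph{same} Temperley--Lieb category and the two intertwiner spaces above have equal finite dimension. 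Equality of dimensions upgrades the inclusions to equalities, so the restriction functor $\text{Rep}(O^+_{F_2}) \to \text{Rep}(\G)$ is fully faithful on the monoidal subcategory generated by the fundamental representation --- which is all of $\text{Rep}(\G)$ --- while preserving the canonical fibre functor. By Woronowicz's Tannaka--Krein reconstruction this forces $\rho$ to be an isomorphism, whence $\G \cong O^+_{F_2}$. The hard part throughout is exactly this last step: a priori $\G$ could be a proper quotient of $O^+_{F_2}$ with strictly larger intertwiner spaces, and ruling this out is precisely what the equality of Temperley--Lieb dimensions (supplied by the monoidal equivalence) together with Tannaka--Krein accomplishes.
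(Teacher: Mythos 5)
Your transport-of-structure argument is a sound way to attack the ``moreover'' clause and the ``only if'' direction: carrying $U$ and $t_{F_1}$ across $\Phi$, reading off $F_2$ from $\Phi(t_{F_1})$, and using that $\Phi$ preserves both the composition $t^*t$ and the zig-zag scalar does yield $X = F_2\bar X F_2^{-1}$, $F_2\bar F_2 = c_11$, $\Tr(F_2^*F_2)=\Tr(F_1^*F_1)$, and hence the surjection $\rho:\mc O(O^+_{F_2})\to\mc O(\G)$. (One minor repair: $t_{F_1}$ is not an invertible morphism, so ``$\Phi$ preserves invertibility'' is not the right justification that $F_2\in\text{GL}_{N_2}(\C)$; invertibility instead follows from the zig-zag identity $F_2\bar F_2 = c_1 1$ with $c_1\neq 0$.) This part is complementary to the paper's sketch, which is devoted almost entirely to the opposite implication.

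The genuine gap sits exactly where you lean on ``the first part of the theorem.'' You never prove the ``if'' direction (equal ratios $\Rightarrow$ monoidal equivalence); the paper's proof \emph{is} the construction of that equivalence: $\text{Rep}(O^+_{F_i})$ is generated as a monoidal C$^\ast$-tensor category by $U_i$ together with $\iota_{\C^{N_i}}$ and $t_{F_i}$, and $\Phi$ is defined on these generators by $U_1\mapsto U_2$ and $\Tr(F_1^*F_1)^{-1/2}t_{F_1}\mapsto \Tr(F_2^*F_2)^{-1/2}t_{F_2}$, then extended via the categorical operations, with well-definedness resting on Tannaka--Krein--Woronowicz reconstruction. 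Since your injectivity step invokes precisely this unproven implication to get $O^+_{F_2}\sim^{mon}O^+_{F_1}$, your argument is circular as written: the dimension equality you need is extracted from the statement being proved. The circularity is repairable without the first part: by Banica's theorem \cite{Ba96}, for every $O^+_F$ with $F\bar F\in\R 1$ and $N\ge 2$ the spaces $\Mor(U^{\otimes k},U^{\otimes l})$ are Temperley--Lieb spaces whose dimensions (counts of non-crossing pairings, e.g.\ Catalan numbers for $\Mor(1,U^{\otimes 2m})$) do not depend on $F$; combined with the fact that $\Phi$ is a linear isomorphism on morphism spaces, this gives $\dim\Mor_{O^+_{F_2}}(U_2^{\otimes k},U_2^{\otimes l})=\dim\Mor_{\G}(X^{\otimes k},X^{\otimes l})$, and your Tannaka--Krein argument for injectivity of $\rho$ then goes through. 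But even after this repair, the ``if'' direction of the equivalence remains unproved in your proposal, so the theorem is only half established.
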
  

\begin{proof}[Sketch]
The basic idea (going back to Banica's seminal work \cite{Ba96}) is that if $U_i$ is the fundamental representation
of $O^+_{F_i}$, then the defining relations for this quantum group are equivalent to the requirement that $U_i$ be irreducible and self-conjugate ($U_i = F_i\bar{U_i}F_i^{-1})$, and that the vector \[t_{F_i} = \sum_{k=1}^{N _i}e_k \otimes F_ie_k \in \C^{N_i} \otimes \C^{N_i}\] belongs to $\Mor(1, U_i \otimes U_i)$.  

Now, since $O^+_{F_i}$ is the universal compact quantum group with the above properties, the Tannaka-Krein-Woronowicz reconstruction theorem \cite{Wo88} implies that  $\text{Rep}(O^+_{F_i})$ must then be generated as a monoidal C$^\ast$-tensor category by the object $U_i$ together with the basic morphisms $\iota_{\C^{N_i}}$ and  $t_{F_i}$.   The monoidal equivalence $\Phi: O^+_{F_1} \sim^{mon} O^+_{F_2}$ can then be defined on these generators by setting
\[
\Phi(U_1) = U_2, \quad \Phi(\iota_{\C^{N_1}}) = \iota_{\C^{N_2}}, \quad \Phi\Big(\frac{1}{\text{Tr}(F_1^*F_1)^{1/2}}t_{F_1}\Big) = \frac{1}{\text{Tr}(F_2^*F_2)^{1/2}}t_{F_2}.
\] 
and extending to all of $\text{Rep}(O^+_{F_1})$ via the categorical operations.  Of course, one has to verify some details, but they can be found in \cite{BiDeVa06}.
\end{proof}

As a particular instance of the above theorem, recall that for $q \in [-1,1]\backslash \{0\}$, we have that \[SU_q(2) = O^+_{F_q} \quad \text{where} \quad F_q = \left( \begin{matrix} 0 &1\\
-q^{-1} & 0 \end{matrix}\right) \in \text{GL}_{2}(\C).\] 
Noting that $F_q \overline{F_q} = -q1$ and $Tr(F_q^*F_q) = 1+q^{-2}$, we immediately obtain the following corollary, which says that every free orthogonal quantum group $O^+_F$
is monoidally equivalent to a suitable $SU_q(2)$.  The result is quite remarkable, since $SU_q(2)$ is always co-amenable, while $O^+_F$ is generally not. 

\begin{cor} \label{mon-equiv-SU}
Let $F \in \text{GL}_N(\C)$ and $F\bar F = c1$ for $c \in \R$.  Then $O^+_F$ is monoidally equivalent to $SU_q(2)$ for the unique value of $q \in [-1,1]\backslash \{0\}$ satisfying $\text{Tr}(F^*F) = -c(q+q^{-1})$.
\end{cor}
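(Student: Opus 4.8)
The plan is to apply the Bichon--De Rijdt--Vaes criterion (the theorem immediately preceding this corollary) with $\G_1 = O^+_F$ and $\G_2 = SU_q(2) = O^+_{F_q}$, and then to solve the resulting scalar equation for $q$. First I would record the invariants attached to $F_q$: a direct $2\times 2$ computation gives $F_q\overline{F_q} = -q^{-1}1$ and $\Tr(F_q^*F_q) = 1 + q^{-2}$. Writing $c$ for the scalar with $F\bar F = c1$, the criterion then says that $O^+_F \sim^{mon} SU_q(2)$ if and only if
\[
\frac{c}{\Tr(F^*F)} = \frac{-q^{-1}}{1+q^{-2}} = \frac{-q}{1+q^2},
\]
which rearranges to the stated relation $\Tr(F^*F) = -c(q + q^{-1})$. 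Thus the corollary reduces to showing that, for the given $F$, there is exactly one $q \in [-1,1]\setminus\{0\}$ satisfying this equation.

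Set $r := c/\Tr(F^*F)$ and $g(q) := -q/(1+q^2)$, so I must show $g(q) = r$ has a unique solution in $[-1,1]\setminus\{0\}$. I would analyse $g$ by elementary calculus: since $g'(q) = (q^2 - 1)/(1+q^2)^2 < 0$ on $(-1,1)$, the function $g$ is strictly decreasing on $[-1,1]$ with $g(-1) = \tfrac12$, $g(1) = -\tfrac12$ and $g(0) = 0$. Hence $g$ restricts to a strictly decreasing bijection from $[-1,1]\setminus\{0\}$ onto $[-\tfrac12,\tfrac12]\setminus\{0\}$, and a unique admissible $q$ exists precisely when $r \in [-\tfrac12,\tfrac12]\setminus\{0\}$.

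It remains to verify the range condition $0 < |r| \le \tfrac12$, equivalently $0 < 2|c| \le \Tr(F^*F)$. Since $c \neq 0$ and $F$ is invertible (so $F^*F$ is positive definite and $\Tr(F^*F) > 0$), one has $r \neq 0$ automatically. For the upper bound I would exploit $F\bar F = c1$, which gives $F^{-1} = c^{-1}\bar F$. A short trace computation then yields the key identity
\[
\Tr\big((F^{-1})^* F^{-1}\big) = c^{-2}\,\Tr\big((\bar F)^* \bar F\big) = c^{-2}\,\Tr(F^*F),
\]
using $(\bar F)^* \bar F = F^T \bar F$ and $\Tr(F^T \bar F) = \|F\|_{HS}^2 = \Tr(F^*F)$. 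In terms of the singular values $\sigma_1,\dots,\sigma_N$ of $F$ this reads $\sum_i \sigma_i^{-2} = c^{-2}\sum_i \sigma_i^2$, and Cauchy--Schwarz gives $N^2 = \big(\sum_i \sigma_i \sigma_i^{-1}\big)^2 \le \big(\sum_i \sigma_i^2\big)\big(\sum_i \sigma_i^{-2}\big) = c^{-2}\big(\Tr(F^*F)\big)^2$. Hence $N|c| \le \Tr(F^*F)$, and since $N \ge 2$ this is the desired inequality $2|c| \le \Tr(F^*F)$, placing $r$ in the required interval and completing existence and uniqueness.

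I expect the main obstacle to be this last range estimate. The application of the monoidal equivalence criterion and the monotonicity analysis of $g$ are routine; the only genuinely non-formal step is the passage from the algebraic constraint $F\bar F = c1$ to the analytic inequality $2|c| \le \Tr(F^*F)$, whose crux is the identity relating $\Tr\big((F^{-1})^*F^{-1}\big)$ to $\Tr(F^*F)$. That identity is precisely what converts the constraint into a statement about singular values to which Cauchy--Schwarz can be applied.
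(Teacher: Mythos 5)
Your proposal is correct, and its first half is exactly the paper's argument: the paper also deduces the corollary by computing the invariants of $F_q$ and feeding them into the Bichon--De Rijdt--Vaes criterion (in fact your computation $F_q\overline{F_q} = -q^{-1}1$ silently corrects a typo in the paper, which writes $-q1$; the stated relation $\Tr(F^*F) = -c(q+q^{-1})$ is the one consistent with your value, since the ratio $c/\Tr(F^*F)$ is what enters the criterion). Where you genuinely go beyond the paper is the second half. The paper treats the corollary as immediate and never verifies that a $q \in [-1,1]\setminus\{0\}$ satisfying $\Tr(F^*F) = -c(q+q^{-1})$ exists and is unique, even though this well-posedness is built into the phrase ``the unique value of $q$''. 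Your monotonicity analysis of $g(q) = -q/(1+q^2)$ correctly reduces this to the inequality $0 < 2|c| \le \Tr(F^*F)$, and your proof of that inequality --- using $F^{-1} = c^{-1}\bar F$ to get $\Tr\big((F^{-1})^*F^{-1}\big) = c^{-2}\Tr(F^*F)$ and then Cauchy--Schwarz on the singular values --- is sound; note that the step $N|c| \le \Tr(F^*F) \Rightarrow 2|c| \le \Tr(F^*F)$ uses $N \ge 2$, which is part of the standing hypotheses for $O^+_F$. So: same route to the monoidal equivalence, plus a correct proof of the existence-and-uniqueness claim that the paper leaves implicit; that extra verification is the only non-formal content of the corollary beyond the preceding theorem, and it is worth recording.
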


\subsection{Central multipliers and monoidal equivalence}

We now make the connection between monoidal equivalence and  central approximation properties.  The key to everything is the following theorem of Freslon \cite{Fr13}  on transferring central completely bounded  multipliers between duals of monoidally equivalent compact quantum groups.  

\begin{thm}[Freslon \cite{Fr13}] \label{thm:mon-equiv-ap}
Let $\G_1,\G_2$ be discrete quantum groups and assume that $\Phi: \hG_1 \sim^{mon}\hG_2$ is a monoidal equivalence.  Given any central function $a_1 = (a^\pi p_\pi)_{\pi \in \Irr(\hG_1)} \in ZL^\infty(\G_1)$, define $a_2 =(a^\pi p_{\Phi(\pi)})_{\pi \in \Irr(\hG_1)} \in ZL^\infty(\G_2)$.
Then we have that $a_1 \in ZM_{cb}A(\G_1)$ if and only if $a_2 \in ZM_{cb}A(\G_2)$, and   \[\|a_1\|_{M_{cb}A(\G_1)} = \|a_2\|_{M_{cb}A(\G_2)}.\]
Moreover, $a_1$ is a normalized completely positive definite function if and only if $a_2$ is.    
\end{thm}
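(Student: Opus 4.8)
The plan is to reduce the statement about complete positive definiteness to the completely bounded assertion of Theorem~\ref{thm:mon-equiv-ap} by exploiting the rigidity of unital completely contractive maps. Recall from Remark~\ref{Daws-Bochner} that a central element $a_i \in ZM_{cb}A(\G_i)$ is a normalized completely positive definite function precisely when its associated adjoint multiplier $L^{(a_i)} \in \mc{CB}_\sigma(L^\infty(\hG_i))$ is unital and completely positive (UCP). So the whole statement amounts to transferring the property ``$L^{(a_1)}$ is UCP'' to ``$L^{(a_2)}$ is UCP'' across the monoidal equivalence $\Phi$.

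First I would record how $L^{(a_i)}$ acts. Specializing formula~\eqref{mult-formula} to the central case $a^\pi \in \C$, one sees that $L^{(a_i)}$ is the ``diagonal'' map scaling the isotypic block of $L^\infty(\hG_i)$ by the scalar $a^\pi$ (at the block indexed by $\pi$ when $i=1$, and by $\Phi(\pi)$ when $i=2$). Since the unit lies in the block of the trivial representation $\pi_0$, the map $L^{(a_i)}$ is unital if and only if $a^{\pi_0} = 1$. Because $\Phi(1_{\hG_1}) = 1_{\hG_2}$ and $a_2$ carries exactly the same scalars as $a_1$, unitality of $L^{(a_1)}$ is equivalent to unitality of $L^{(a_2)}$.

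Next I would invoke the standard fact that a unital map between unital C$^\ast$-algebras is completely positive if and only if it is completely contractive. Applied to our unital maps, this gives
\begin{align*}
L^{(a_i)} \text{ is UCP} &\iff L^{(a_i)} \text{ is unital and } \|L^{(a_i)}\|_{cb} \le 1 \\
&\iff a^{\pi_0} = 1 \text{ and } \|a_i\|_{M_{cb}A(\G_i)} \le 1,
\end{align*}
where we used $\|a_i\|_{M_{cb}A(\G_i)} = \|L^{(a_i)}\|_{cb}$. The right-hand condition is now manifestly symmetric in $i=1,2$: the scalar $a^{\pi_0}$ is shared, and the completely bounded part of Theorem~\ref{thm:mon-equiv-ap} gives $\|a_1\|_{M_{cb}A(\G_1)} = \|a_2\|_{M_{cb}A(\G_2)}$. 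Chaining the equivalences then yields that $a_1$ is normalized completely positive definite if and only if $a_2$ is.

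The main obstacle will not lie in this last step: once the completely bounded statement is in hand, the completely positive upgrade is essentially soft, its only genuine input being the unital-completely-contractive-implies-completely-positive principle. The hard work is entirely pushed into the completely bounded assertion, whose proof is where monoidal equivalence is actually used, since one must identify the $M_{cb}A$-norm of a central multiplier with a quantity depending only on the scalars $(a^\pi)$ together with monoidal invariants (the fusion multiplicities $\dim\Mor(\nu, \pi\otimes\sigma)$, the quantum dimensions, and the inner-product structure on the intertwiner spaces). If instead one wanted a self-contained proof of the completely positive statement alone, the delicate point would be precisely this categorical characterization of complete positivity of central multipliers: it is subtle in the non-unimodular case because complete positivity a priori sees the modular/GNS data of $\hG_i$, and the crux is that all the relevant data are preserved by $\Phi$.
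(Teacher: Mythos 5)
Your reduction of the ``Moreover'' clause to the completely bounded clause is correct, and it is in fact exactly the paper's own closing observation: normalized complete positive definiteness of $a_i$ is equivalent (via Remark \ref{Daws-Bochner}) to $L^{(a_i)}$ being a unital complete contraction, unitality is read off from the scalar at the trivial representation (which $\Phi$ preserves), and the cb norms agree by the main clause. The genuine gap is that this is \emph{all} you prove. The completely bounded assertion --- that $a_1 \in ZM_{cb}A(\G_1)$ if and only if $a_2 \in ZM_{cb}A(\G_2)$, with $\|a_1\|_{M_{cb}A(\G_1)} = \|a_2\|_{M_{cb}A(\G_2)}$ --- is the actual content of the theorem, and your proposal assumes it (``the completely bounded assertion of Theorem \ref{thm:mon-equiv-ap}'') rather than proving it. As a proof of the stated theorem this is circular: every step that genuinely uses the monoidal equivalence is deferred to a part you never carry out, and what remains is the soft bookkeeping that the paper dispatches in its final sentence.

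Moreover, the route you gesture at for the missing step --- an intrinsic formula identifying $\|a\|_{M_{cb}A(\G)}$ for central $a$ with a quantity built from the scalars $(a^\pi)$ and monoidal invariants --- is not what the paper does, and no such formula is established there. Instead the paper transfers the norm through the linking algebra $B_r$ of the monoidal equivalence (Theorem \ref{link-algebra}). Writing $P_\pi$ for the finite rank idempotents onto the spectral subspaces of $B_r$, the action $\delta_1 : B_r \to B_r \otimes C(\hG_1)$ satisfies $(\iota \otimes L^{(a_1)}) \circ \delta_1 = \delta_1 \circ \big(\sum_\pi a^\pi P_\pi\big)$, and since $\delta_1$ is an injective, hence completely isometric, $\ast$-homomorphism this yields $\big\|\sum_\pi a^\pi P_\pi\big\|_{\mc{CB}(B_r)} \le \|a_1\|_{M_{cb}A(\G_1)}$. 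Then the Vaes--Vergnioux embedding $\tilde\theta : C(\hG_2) \to B_r \otimes \tilde B_r$ of Lemma \ref{VaVe-lem} satisfies $\tilde\theta \circ L^{(a_2)} = \big(\big(\sum_\pi a^\pi P_\pi\big) \otimes \iota\big) \circ \tilde\theta$, whence $\|a_2\|_{M_{cb}A(\G_2)} \le \big\|\sum_\pi a^\pi P_\pi\big\|_{\mc{CB}(B_r)} \le \|a_1\|_{M_{cb}A(\G_1)}$; exchanging the roles of $\G_1$ and $\G_2$ gives the reverse inequality. This linking-algebra sandwich (or an equivalent transfer argument) is the step you must supply for a complete proof; note also that it simultaneously handles the membership statement $a_1 \in ZM_{cb}A(\G_1) \Leftrightarrow a_2 \in ZM_{cb}A(\G_2)$, which your proposal likewise takes for granted.
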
  

In order to prove this theorem, the key idea is to compare the cb multiplier norms for $a_1$ and $a_2$ using the algebraic/analytic structure of the {\it linking algebra} associated to the given monoidal equivalence.

\begin{thm}[Bichon-De Rijdt-Vaes \cite{BiDeVa06}] \label{link-algebra}
Let $\G_1 \sim^{mon} \G_2$ be monoidally equivalent compact quantum groups with monoidal equivalence $\Phi$.  Then 
\begin{enumerate}
\item There exists a unique unital $\ast$-algebra $\mc B:= \mc O(\G_1,\G_2)$ (the linking algebra) equipped with a faithful state $\omega$ and unitary elements $X^\pi \in \mc B(H_\pi, H_{\Phi(\pi ))}) \otimes \mc B$ for each $\pi \in \Irr(\G)$, satisfying
\begin{enumerate}
\item The matrix elements of $(X^\pi)_\pi$ form a linear basis of $\mc B$.
\item $X^\sigma_{13}X^\gamma_{23}(S \otimes 1) = (\Phi(S) \otimes 1) X^\pi$ for each $S \in  \Mor(\pi, \sigma \otimes \gamma)$.
\item $(\iota \otimes \omega)(X^\pi) = 0$ if $\pi \ne 1$.
\end{enumerate}
\item There exists  a unique pair of  commuting actions $\G_1 \curvearrowright^{\delta_1} \mc B \curvearrowleft^{\delta_2} \G_2 $  (i.e., unital $\ast$-homomorphisms  $\delta_1: \mc B \to \mc B \otimes \mc O(\G_1)$ and $\delta_2: \mc B \to \mc O(\G_2) \otimes \mc B$ such that \[(\delta_1 \otimes \iota)\delta_1 = (\iota \otimes \Delta_1)\delta_1, \quad  (\iota \otimes \delta_2)\delta_2 =  (\Delta_2 \otimes \iota) \delta_2 \] and
\[ \text{span}(\delta_1(\mc B) (1 \otimes \mc O(\G_1))) = \mc B \otimes \mc O(\G_1),  \text{span}(\delta_2(\mc B) (\mc O(\G_2)\otimes 1)=  \mc O(\G_2) \otimes \mc B.\]    These actions are given by the formulas
\begin{align}\label{action-formula}(\iota \otimes \delta_1)(X^\pi) = X^\pi_{12}U^\pi_{13} \qquad (\iota \otimes \delta_2)(X^\pi) = U^{\Phi(\pi)}_{12}X^{\pi}_{13} \qquad (\pi \in \text{Irr}(\G_1)).
\end{align}
\item The state $\omega:\mc B \to \C$ is invariant under the actions $\delta_1, \delta_2$.  I.e.,
\[(\iota \otimes \omega)\delta_2 = (\omega \otimes \iota)\delta_1 = \omega(\cdot)1.\]  
 \end{enumerate}
Denote  by $B_r$ the C$^\ast$-algebra generated in the GNS-representation with respect to $\omega$ and by  $B_u = C^*_u(\mc B)$ the universal enveloping C$^\ast$-algebra of $\mc B$.  Then $\delta_i$ extends uniquely to a coaction of C$^\ast$-algebraic CQGs on $B_r$ and $B_u$, respectively.
\end{thm}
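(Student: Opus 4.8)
The plan is to build the linking algebra $\mc B$ by a Tannaka--Krein--Woronowicz reconstruction from the categorical data carried by $\Phi$, to equip it with the analogue of a Haar state, and then to read off the actions and the two C$^\ast$-completions. First I would let $\mc B_0$ be the universal unital $\ast$-algebra generated by the coefficients of symbols $X^\pi \in \mc B(H_\pi, H_{\Phi(\pi)}) \otimes \mc B_0$ ($\pi \in \Irr(\G_1)$), subject to the requirements that each $X^\pi$ be unitary and that the relation (b), $X^\sigma_{13}X^\gamma_{23}(S \otimes 1) = (\Phi(S) \otimes 1)X^\pi$, hold for every $S \in \Mor(\pi, \sigma \otimes \gamma)$. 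Normalization at the trivial representation ($H_{1} = \C$, $\Phi(1_{\G_1}) = 1_{\G_2}$) forces $X^{1} = 1$, and the fusion relations (b) express every product of coefficients as a linear combination of coefficients of the $X^\pi$ indexed by subrepresentations of $\sigma \otimes \gamma$; hence the coefficients span $\mc B_0$. The uniqueness asserted in (1) is then essentially formal: any other system satisfying (a)--(c) is linked to ours by the $\ast$-homomorphism matching coefficients of $X^\pi$ with those of $(X')^\pi$, which (a) makes bijective and (c) makes state-preserving. So the work is all in existence.

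The substance is to establish property (a), that the coefficients of $(X^\pi)$ are \emph{linearly independent}, and property (c), that the functional $\omega$ determined by $(\iota \otimes \omega)X^\pi = \delta_{\pi,1}$ is a \emph{faithful state}. For this I would run Woronowicz's reconstruction argument in relative form. Because $\Phi$ is a \emph{unitary} monoidal equivalence --- it preserves the morphism spaces, the $\ast$-operation, and hence the positive-definite inner products on them, together with the standard solutions of the conjugate equations and the (categorical) quantum dimensions --- one can transport the Schur orthogonality machinery of $\G_1$ to the system $(X^\pi)$. Concretely, the solutions $R_\pi,\bar R_\pi$ of the conjugate equations carried through $\Phi$ furnish explicit orthogonality relations for the coefficients of $X^\pi$; the Gram matrices occurring in them are exactly the invertible, positive ones governing the Haar state of $\G_1$, transported by $\Phi$. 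Their non-degeneracy simultaneously yields the linear independence in (a) and the positivity and faithfulness of $\omega$ in (c). I expect this to be the main obstacle: the whole argument hinges on verifying that the \emph{rigidity} data of the tensor category, not merely its fusion rules, is faithfully reflected in $(X^\pi)$, so that positivity is genuinely inherited from $\G_1$ rather than only formally postulated.

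With $(\mc B, \omega, X)$ in hand I would define $\delta_1, \delta_2$ on generators by the formulas in (2), namely $(\iota \otimes \delta_1)X^\pi = X^\pi_{12}U^\pi_{13}$ and $(\iota \otimes \delta_2)X^\pi = U^{\Phi(\pi)}_{12}X^\pi_{13}$, and check that they respect (b): this is a direct computation using that $U^\pi$ is a representation of $\G_1$, that $U^{\Phi(\pi)}$ is one of $\G_2$, and that $S \in \Mor(\pi,\sigma \otimes \gamma)$ corresponds to the $\G_2$-morphism $\Phi(S)$. Coassociativity then reduces to the comultiplication identities for $U^\pi$ and $U^{\Phi(\pi)}$, while the Podle\'s density conditions hold because the coefficients of $X^\pi$ together with those of $U^\pi$ (respectively $U^{\Phi(\pi)}$) already span $\mc B \otimes \mc O(\G_1)$ (respectively $\mc O(\G_2) \otimes \mc B$). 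The two actions commute immediately, as $\delta_1$ acts on the $\G_1$-leg and $\delta_2$ on the $\G_2$-leg of each $X^\pi$. For the invariance (3), applying $\iota \otimes \omega$ and $\omega \otimes \iota$ to these formulas and invoking (c) collapses each expression onto its trivial isotypic component, where it equals $\omega(\cdot)1$; this simply re-expresses that $\omega$ is the transported Haar state.

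Finally, for the C$^\ast$-completions: since $\omega$ is faithful, its GNS representation is faithful on $\mc B$, and I would take $B_r$ to be the norm closure of $\mc B$ there. Invariance of $\omega$ produces unitaries on the GNS space implementing $\delta_1$ and $\delta_2$ (exactly as a multiplicative unitary implements the coproduct on $L^\infty(\G)$ earlier in the paper), so each $\delta_i$ is norm continuous and extends to a reduced coaction. For $B_u$, each $X^\pi$ is unitary, so in any $\ast$-representation its coefficients have norm at most $1$; the universal C$^\ast$-seminorm on $\mc B$ is therefore finite, and it is a genuine norm since $\mc B$ admits the faithful GNS representation above, giving $B_u = C^\ast_u(\mc B)$. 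The extensions of $\delta_1,\delta_2$ to $B_u$ then follow from its universal property.
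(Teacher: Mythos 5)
Your treatment of parts (2), (3) and of the two C$^\ast$-completions essentially coincides with the paper's: the actions are defined on generators by the formulas \eqref{action-formula}, covariance, coassociativity and $\omega$-invariance are direct computations, and the reduced/universal extensions follow from the GNS construction and the universal property. The gap is in part (1), precisely at the point you yourself flag as the main obstacle. You define $\mc B_0$ as the \emph{universal} $\ast$-algebra on the coefficients of symbols $X^\pi$ subject to unitarity and the fusion relations (b), and then propose to obtain linear independence (a) and the faithful state (c) by ``transporting Schur orthogonality'' through $\Phi$. This is circular. Before any orthogonality relation can even be stated, the functional $\omega$ determined by $(\iota \otimes \omega)(X^\pi) = \delta_{\pi,1}$ must be well defined, and that already presupposes that $1 \neq 0$ in $\mc B_0$ and that the isotypic decomposition of $\mc B_0$ makes sense --- which is exactly statement (a). Worse, a $\ast$-algebra presented by generators and relations can perfectly well be zero, or collapse partially; the only way to exclude this is to exhibit a nonzero $\ast$-algebra satisfying the relations, i.e.\ to carry out an explicit construction. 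The transported rigidity data does not do this for you: in a possibly degenerate quotient, a formally derived identity like $\omega(a^*a) = \sum_\pi d(\pi)^{-1}\langle Q_\pi \xi^\pi | \xi^\pi \rangle$ has no content, since there is no guarantee that $\omega$ exists or that sums of squares cannot equal negative multiples of $1$.

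The paper avoids this entirely by \emph{constructing} $\mc B$ concretely rather than universally: as a vector space, $\mc B = \bigoplus_{\pi \in \Irr(\G_1)} \mc B(H_\pi, H_{\Phi(\pi)})^*$, so that (a) holds by fiat; the product is then \emph{defined} by relation (b), with associativity following from the fact that maps of the form $(S \otimes 1)T$ linearly span $\Mor(x, a \otimes b \otimes c)$; the involution is defined through the conjugate representations via $(X^\pi)^*_{13}(\Phi(t)\otimes 1) = X^{\bar\pi}_{23}(t \otimes 1)$; and $\omega$ is the functional picking out the $\pi = 1$ component. Positivity and faithfulness then become an explicit computation, $\omega(aa^*) = \sum_{\pi,i} d(\pi)^{-1}\langle Q_\pi \xi_i^\pi | \xi_i^\pi\rangle \ge 0$, which is where the modular data $Q_\pi$ you allude to genuinely enters. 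If you wish to keep the universal-algebra formulation, you must still produce this concrete model first (your $\mc B_0$ then surjects onto it, and that surjection is what gives linear independence in $\mc B_0$ and identifies the two); so the explicit construction is not an optional refinement of your argument but the actual core of the existence proof.
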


In order to motivate what follows, it is helpful to keep in mind the trivial example of a monoidal equivalence $\G_1 \sim^{mon} \G_1$ given by the identity map.  In this case, $\mc O(\G_1,\G_1) = \mc O(\G_1)$, $\omega = \varphi_{\G_1}$ is the Haar state, and $\delta_i$ is just the left/right action of $\G_1$ on $\mc O(\G_1)$ induced by the coproduct $\Delta_{\G_1}$.  Thinking about the linking algebra $\mc B$ in terms of this special case will help make the formulas in the above proposition appear more natural.

\begin{proof}[Rough outline of Theorem \ref{link-algebra}]
As a vector space, we define \[\mc B = \bigoplus_{\pi \in \Irr(\G)} \mc B(H_\pi,H_{\Phi(\pi)})^*\] and we define natural elements $X^\pi \in \mc B(H_\pi, \otimes H_{\Phi(\pi)}) \otimes \mc B$ by $(\omega_\sigma \otimes \iota)X^\pi = \delta_{\pi, \sigma}\omega_\sigma$ for all $\omega_\sigma \in \mc B(H_\sigma, H_{\Phi(\sigma)})^*$.  We can then define a unique bilinear map $\mc B \times \mc B \to \mc B$ using the relation (b) from the statement of the  theorem.  The associativity
of this map follows from the calculation 
\begin{align*}
&(X_{14}^aX_{24}^b)X_{34}^c((S \otimes 1)T\otimes 1) \\
&= (\Phi((S \otimes 1)T)\otimes 1)X^x \\
&=X_{14}^a(X_{24}^bX_{34}^c)((S \otimes 1)T\otimes 1) \qquad (S \in \Mor(y,a \otimes b), \ T \in \Mor(x,y \otimes c)),
\end{align*} and the fact that maps of the form $(S \otimes 1)T$ linearly span $\Mor(x,a\otimes b \otimes c)$.

The involution $b \mapsto b^*$ on $\mc B$ is defined  by
\[(X^\pi)^*_{13}(\Phi(t) \otimes 1) = X^{\bar \pi}_{23}(t \otimes 1) \qquad (t \in \Mor(1, \pi \otimes \bar \pi)). \]
Equivalently, we have \[((\omega_{\xi, \eta} \otimes \iota)(X^\pi))^* = (\omega_{(\xi^*\otimes 1)t, (1 \otimes \eta^*)\Phi(\tilde{t})}\otimes \iota)X^{\bar \pi},\] where $\tilde t \in \Mor(1, \bar \pi \otimes \pi)$ is such that $(t^* \otimes 1)(1\otimes \tilde{t})  =1$. 

One can then check that the above constructed elements $X^x$ are unitaries with respect to this $\ast$-algebra structure.  One can also check that if $a = \sum_{\pi, i} (\omega_{\xi_i^\pi, f_i^\pi} \otimes \iota)X^\pi \in \mc B$, where $(f_i^\pi)_i$ is an orthonormal basis for $H_{\Phi(\pi)}$, then \[\omega(aa^*) = \sum_{\pi,i} \frac{1}{d(\pi)}\langle Q_\pi \xi_i^\pi |\xi_i^\pi\rangle \ge 0.\]   
Hence $\omega$ is a state.

Given the above description of $\mc B$, the construction of the actions $\delta_1, \delta_2$ according to \eqref{action-formula} is well-defined, and they automatically satisfy the claimed $\omega$-invariance condition.  This $\omega$-invariance in turn implies the existence of the extension of $\delta_i$ to a C$^\ast$-algebraic action of $\G_i$ on $B_r$.  The case of $\delta_i$ extending to an action on $B_u$ follows from standard considerations. See for example \cite[Lemma 2.13]{dC16}.  
\end{proof}

By symmetry, for every monoidal equivalence of compact quantum groups $\Phi:\G_1 \sim^{mon}\G_2$, we have the inverse monoidal equivalence \[\Phi^{-1}:\G_2 \sim^{mon} \G_1,\] and  we can repeat the above construction for $\Phi^{-1}$  which yields a corresponding $\ast$-algebra $(\tilde {\mc B}, \tilde \omega)$ with faithful state $\tilde \omega$.  Let $(Y^\pi)_{\pi \in \Irr(\G_1)}$ be the unitary matrices corresponding to the algebra $\tilde {\mc B} = \mc O(\G_2,\G_1)$.  Then one has a canonical $\ast$-anti-isomorphism \[\rho: B_r \to \tilde B_r; \qquad (\iota \otimes \rho)(X^\pi) = (Y^\pi)^* \qquad (\pi \in \text{Irr}(\G_1)).\]
Thus $\tilde B_r \cong B_r^{op}$.  See \cite{VaVe07} for details.

Before getting to the proof of Theorem \ref{thm:mon-equiv-ap}, we need one more important lemma due to Vaes and Vergnioux.  See \cite[Theorem 6.1]{VaVe07}.

\begin{lem} \label{VaVe-lem}
Let $\Phi:\G_1 \sim^{mon}\G_2$ be a monoidal equivalence with reduced linking algebras $B_r$ and $\tilde{B}_r$ as above.  Then there exists an injective $\ast$-homomorphism \[\theta:C(\G_1) \to \tilde B_r \otimes B_r; \quad \text{given by } (\iota \otimes \theta)U^\pi = Y^\pi_{12}X^\pi_{13} \in \mc B(H_\pi) \otimes \tilde{B_r} \otimes B_r \qquad (\pi \in \text{Irr}(\G_1)).\] 
\end{lem}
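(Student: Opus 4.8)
The plan is to exhibit the matrices $Z^\pi := Y^\pi_{12}X^\pi_{13} \in \mc B(H_\pi) \otimes \tilde B_r \otimes B_r$ as the images $(\iota\otimes\theta)U^\pi$ of a fibre-functor realization of $\text{Rep}(\G_1)$, and then to invoke Woronowicz' Tannaka--Krein reconstruction \cite{Wo88} to produce $\theta$. First I would check that each $Z^\pi$ is a unitary in $\mc B(H_\pi) \otimes \tilde B_r \otimes B_r$: since $X^\pi$ is unitary in $\mc B(H_\pi, H_{\Phi(\pi)})\otimes B_r$ and $Y^\pi$ is unitary in $\mc B(H_{\Phi(\pi)}, H_\pi)\otimes \tilde B_r$, and the two factors act on disjoint algebra legs ($Y^\pi_{12}$ being trivial on leg $3$ while $X^\pi_{13}$ is trivial on leg $2$), the relation $(Y^\pi_{12})^*Y^\pi_{12}=1_{H_{\Phi(\pi)}}\otimes 1_{\tilde B_r}$ collapses the middle factor and leaves $(X^\pi_{13})^*X^\pi_{13}=1_{H_\pi}\otimes 1_{B_r}$; the co-isometry relation is symmetric. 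Note that the leg-$1$ composition is exactly $H_\pi \xrightarrow{X^\pi} H_{\Phi(\pi)} \xrightarrow{Y^\pi} H_\pi$, which is why $Z^\pi$ lands in $\mc B(H_\pi)$ rather than in a mixed space.

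The heart of the argument is to verify that the assignment $U^\pi \mapsto Z^\pi$ is compatible with the tensor structure and with every intertwiner of $\text{Rep}(\G_1)$, and I expect this to be the main obstacle. Fixing $S \in \Mor(\pi, \sigma\otimes\gamma)$ in $\text{Rep}(\G_1)$, relation (b) of Theorem \ref{link-algebra} applied to the $X$'s gives $X^\sigma_{13}X^\gamma_{23}(S\otimes 1) = (\Phi(S)\otimes 1)X^\pi$, producing a factor $\Phi(S)$. Applying the same relation (b) to the linking algebra $\tilde B_r$ of the \emph{inverse} monoidal equivalence $\Phi^{-1}$ — evaluated on the morphism $\Phi(S)\in\Mor(\Phi(\pi),\Phi(\sigma)\otimes\Phi(\gamma))$ of $\text{Rep}(\G_2)$ — reintroduces the factor $\Phi^{-1}(\Phi(S)) = S$. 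Thus the two monoidal functors telescope, and after the requisite leg bookkeeping one obtains that $S\otimes 1$ intertwines $Z^\pi$ with the tensor product of $Z^\sigma$ and $Z^\gamma$; specializing $S$ to the isometric decomposition morphisms shows $(Z^\pi)_\pi$ is monoidal. Compatibility with the $\ast$-operation is checked the same way from the involution formulas for $X$ and $Y$ together with $\Phi(S^*)=\Phi(S)^*$ and $\Phi^{-1}\Phi=\iota$, identifying $Z^{\bar\pi}$ with the contragredient of $Z^\pi$. By the Tannaka--Krein description of $\mc O(\G_1)$ as the $\ast$-algebra universally generated by the coefficients of a unitary representation satisfying precisely these intertwiner relations, the map $u^\pi_{ij}\mapsto (Z^\pi)_{ij}$ then extends to a unital $\ast$-homomorphism $\theta:\mc O(\G_1)\to \tilde B_r\otimes B_r$.

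Finally I would promote $\theta$ to the reduced C$^\ast$-algebra and establish injectivity by showing it preserves the Haar state. Writing $Y^\pi=\sum_k A_k\otimes b_k$ and $X^\pi=\sum_l C_l\otimes d_l$, the disjoint-leg structure yields the factorization $(\iota\otimes\tilde\omega\otimes\omega)(Z^\pi)=\big[(\iota\otimes\tilde\omega)Y^\pi\big]\big[(\iota\otimes\omega)X^\pi\big]$ in $\mc B(H_\pi)$; by invariance property (c) of Theorem \ref{link-algebra}, $(\iota\otimes\omega)X^\pi=0$ for $\pi\neq 1$, while $Z^1 = 1$. Hence $(\tilde\omega\otimes\omega)\circ\theta$ annihilates all coefficients of non-trivial irreducibles and equals $1$ on constants, so $(\tilde\omega\otimes\omega)\circ\theta = \varphi_{\G_1}$, the Haar state. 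Since $\varphi_{\G_1}$ is faithful on $C(\G_1)$, the equation $\theta(a)=0$ forces $\varphi_{\G_1}(a^*a) = (\tilde\omega\otimes\omega)(\theta(a)^*\theta(a))=0$ and hence $a=0$; and because $\theta$ intertwines the GNS representations of $\varphi_{\G_1}$ and $\tilde\omega\otimes\omega$, it is isometric for the reduced norms and therefore extends to the asserted injective $\ast$-homomorphism $\theta:C(\G_1)\to \tilde B_r\otimes B_r$.
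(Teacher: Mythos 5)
Your proposal is correct and takes essentially the same route as the paper: the paper defines $\theta$ on $\mc O(\G_1)$, notes that the $\ast$-homomorphism property is a routine computation from the defining relations of the linking algebras (your telescoping of relation (b) for $\Phi$ and $\Phi^{-1}$ is precisely that computation, since the multiplication on $\mc B$ and $\tilde{\mc B}$ is defined by relation (b)), and then obtains the extension to $C(\G_1) \to \tilde B_r \otimes B_r$ and its injectivity from the same observation $(\tilde\omega \otimes \omega)\circ\theta = \varphi_{\G_1}$ together with faithfulness of the Haar state. The only cosmetic differences are your Tannaka--Krein packaging of the algebraic step and your labelling of item (1)(c) of Theorem \ref{link-algebra} as the ``invariance property'' (invariance is item (3); what you actually use is the vanishing $(\iota\otimes\omega)X^\pi = 0$ for $\pi \ne 1$, which is stated correctly), neither of which affects the argument.
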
 
\begin{proof}
A priori, $\theta$ is defined as a map from $\mc O(\G_1)$ to $\tilde{\mc B} \odot  \mc B$, and the fact that it is a $\ast$-homomorphism is a routine calculation based on the definitions of $\mc B$  and  $\tilde {\mc B}$. 
To obtain the extension to a $\ast$-homomorphism $\theta:C(\G_1) \to \tilde B_r \otimes B_r$, we just observe that $(\tilde \omega \otimes \omega) \theta = \varphi_{\G_1}$.  Therefore $\theta$ extends to the GNS completions.
\end{proof}

\subsection{Proof of Theorem \ref{thm:mon-equiv-ap}}
Fix a monoidal equivalence $\Phi:\hG_1 \sim^{mon} \hG_2$ and let $B_r$ be the corresponding reduced linking algebra.  For each $\pi \in \Irr(\hG_1)$, let $B_\pi \subset B_r$ denote the linear span of the coefficients of the unitary $X^\pi$ (the so-called {\it spectral subspace of $\pi$}), and observe that with respect to the $L^2(B_r,\omega)$-Hilbert space structure, $B_r$ is the orthogonal direct sum $\bigoplus^{\ell^2}_{\pi \in \text{Irr}(\G_1)}B_\pi$.  Finally, denote by  $P_\pi:B_r \to B_r$ the orthogonal projection onto the spectral subspace $B_\pi$, but viewed as a finite rank idempotent map on the C$^\ast$-algebra $B_r$.  

Now, observe that for each $\pi \in \text{Irr}(\hG_1)$, our action $\delta_1:B_r \to B_r \otimes C(\hG_1)$ from Theorem \ref{link-algebra} satisfies the equation \[(\iota \otimes L^{(p_\pi)}) \circ \delta_1 = \delta_1 \circ P_\pi,\]
where $L^{(p_\pi)} \in \mc CB(C(\hG_1))$ is the adjoint of the multiplier $p_\pi \in ZM_{cb}A(\G_1)$.  By continuous linear extension, this yields the equality of (possibly unbounded)  linear maps
\[
(\iota \otimes L^{(a_1)}) \circ \delta_1 = \delta_1 \circ \Big( \sum_{\pi \in \text{Irr}(\hG_1)}a^\pi P_\pi \Big),
\]  for any $a_1 = (a^\pi p_\pi)_{\pi \in \text{Irr}(\hG_1)} \in ZL^\infty(\G_1)$.  But $\delta_1$ is an injective (hence completely isometric) $\ast$-homomorphism, hence we obtain we get 
\begin{align}\label{one}\Big\|\sum_{\pi \in \text{Irr}(\hG_1)}a^\pi P_\pi
\Big\|_{\mc {CB}(B_r)}  =\Big\|\delta_1 \circ \Big(\sum_{\pi \in \text{Irr}(\hG_1)}a^\pi P_\pi
\Big)\Big\|_{\mc {CB}(B_r)}= \|(\iota \otimes L^{(a_1)}) \circ \delta_1\|_{cb} \le \|a_1\|_{M_{cb}A(\G_1)}.
\end{align}
On the other hand, Lemma \ref{VaVe-lem} provides an injective $\ast$-homomorphism  $\tilde\theta:C_r(\G_2) \to B_r \otimes \tilde B_r$, which, by construction, satisfies 
\[\tilde \theta \circ L^{(p_{\Phi(\pi)})} = (P_\pi \otimes \iota) \tilde \theta \qquad (\pi \in \text{Irr}(\hG_1)) \implies \tilde \theta \circ L^{(a_2)} = \Big( \Big( \sum_\pi a^\pi P_\pi \Big)  \otimes \iota\Big) \tilde \theta, \]
where $a_2 = (a^\pi p_{\Phi(\pi)})_\pi \in ZL^\infty(\G_2)$.
Therefore 
\begin{align} \label{two}\|a_2\|_{M_{cb}A(\G_2)} = \|\tilde \theta \circ L^{(a_2)}\|_{cb} = \Big\| \Big( \Big( \sum_\pi a^\pi P_\pi \Big)  \otimes \iota\Big) \tilde \theta\Big\|_{cb} \le \Big\|\sum_{\pi \in \text{Irr}(\hG_1)}a^\pi P_\pi
\Big\|_{\mc {CB}(B_r)}
\end{align}
From \eqref{one} and \eqref{two}, we obtain 
\[
\|a_1\|_{M_{cb}A(\G_1)} \ge \|a_2\|_{M_{cb}A(\G_2)},
\]
and  by switching the roles of $\G_1$ and $\G_2$, the reverse inequality must hold as well.  Finally, we observe that complete positive definiteness is preserved by the bijection $ZM_{cb}A(\G_1) \owns a_1 \longleftrightarrow a_2 \in ZM_{cb}A(\G_2)$, since $L^{(a_1)}$ is unital complete contraction if and only $L^{(a_2)}$ is.

\section{Application: Approximation properties for free quantum groups} \label{app}

In this final section, our goal is to apply some of the techniques of the previous sections to study the Haagerup property and weak amenability for some examples of discrete free quantum groups.  The main family of examples that we will consider are the free orthogonal quantum groups.  This class of examples turns out to be the key to understanding all discrete free quantum groups.   The main result we plan to discuss is the following:

\begin{thm}[\cite{dCFY}] \label{approx-prop-fqg} 
Let $N \ge 2$, $F \in \text{GL}_N(\C)$ with $F\bar F \in \R1$, and put $\F O_F = \widehat{O^+_F}$.  Then the discrete quantum group $\F O_F$ has the central Haagerup property and is centrally weakly amenable with $Z\Lambda_{cb}(\F O_F) = 1$  
\end{thm}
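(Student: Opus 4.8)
The plan is to reduce the entire statement, by monoidal equivalence, to the single model quantum group $\G_2 := \widehat{SU_q(2)}$, and then to build explicit \emph{central} multipliers there using the $SU(2)$-type fusion rules. By Corollary \ref{mon-equiv-SU} there is a unique $q \in [-1,1]\setminus\{0\}$ with $O^+_F \sim^{mon} SU_q(2)$. Applying Theorem \ref{thm:mon-equiv-ap} to the discrete quantum groups $\G_1 := \F O_F$ and $\G_2$, whose compact duals $O^+_F$ and $SU_q(2)$ are thus monoidally equivalent via some $\Phi$, the assignment $a^\pi p_\pi \leftrightarrow a^\pi p_{\Phi(\pi)}$ is a bijection $ZM_{cb}A(\G_1) \to ZM_{cb}A(\G_2)$ preserving the cb-multiplier norm and carrying normalized completely positive definite functions to normalized completely positive definite functions. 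Since $\Phi$ is a bijection on $\Irr$, this correspondence also preserves finite support and membership in $C_0$, these being conditions on the scalar family $(a^\pi)$ alone. Hence it suffices to produce, for $\G_2$: (i) a net of central, normalized completely positive definite elements of $C_0(\G_2)$ forming a bounded approximate identity for $C_0(\G_2)$; and (ii) a net of finitely supported central multipliers that is a left approximate unit for $A(\G_2)$ with cb-norms tending to $1$. Transporting these back through $\Phi^{-1}$ witnesses the central Haagerup property and $Z\Lambda_{cb}(\F O_F) = 1$; finite support guarantees the transported multipliers in (ii) lie in $A(\F O_F)$, since $C_c(\F O_F) \subseteq A(\F O_F)$.

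For $\G_2 = \widehat{SU_q(2)}$ the irreducibles are indexed by $n \in \N$ with $u_1 \otimes u_n \cong u_{n-1} \oplus u_{n+1}$, so a central multiplier is just a scalar sequence $(\phi(n))_n$, and writing $\chi_n = \tilde U_n(\chi_1)$ for the characters, $\tilde U_n$ are the dilated Chebyshev polynomials fixed by this fusion rule. I would identify the normalized central completely positive definite functions with the probability measures $\mu$ on the spectrum $[-2,2]$ of $\chi_1$ (a fusion invariant, by the semicircle computation already used in the proof of Banica's theorem) via $\phi_\mu(n) = \frac{1}{\hat\epsilon(\chi_n)}\int \tilde U_n \, d\mu$, normalized by the co-unit so that the point mass at the top of the spectrum returns the constant sequence $1$; a point mass gives a state, hence a genuine completely positive definite function. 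Taking $\mu = \delta_x$ with $x \uparrow 2$ produces a net $\phi_t$ with $\phi_t(n) \to 1$ for each fixed $n$, while for each fixed $t$ one has $\phi_t(n) \to 0$ as $n \to \infty$. This is exactly the completely positive definite $C_0$ approximate identity required in (i), so the central Haagerup property follows at once.

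To obtain (ii) I would truncate: set $\psi_{t,K}(n) = \phi_t(n)\,\mathbf{1}_{\{n \le K\}}$, which is central and finitely supported, hence an element of $A(\G_2)$. Sending $K \to \infty$ together with $t$ makes $\psi_{t,K}(n) \to 1$ on each block, yielding a left approximate unit for $A(\G_2)$. Since $\phi_t$ is completely positive definite, $\|\phi_t\|_{M_{cb}A} = \phi_t(0) = 1$, so the subadditivity estimate $\|\psi_{t,K}\|_{M_{cb}A} \le \|\phi_t\|_{M_{cb}A} + \|\phi_t \mathbf{1}_{\{n>K\}}\|_{M_{cb}A} = 1 + \|\phi_t\mathbf{1}_{\{n>K\}}\|_{M_{cb}A}$ reduces everything to showing that the cb-norm of the tail tends to $0$.

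The heart of the matter, and the step I expect to be the main obstacle, is this tail estimate. It requires an effective description of the cb-norm of a central multiplier of $\widehat{SU_q(2)}$ in terms of the sequence $(\phi(n))$, after which one must show that the tails $\phi_t\mathbf{1}_{\{n>K\}}$ have cb-norm decaying to $0$ as $K\to\infty$. I would obtain such a description by realizing central multipliers as radial Schur multipliers on the weighted fusion graph (the half-line $\N$ carrying the quantum-dimension weights $[n+1]_q$) and invoking a Haagerup-type inequality for radial functions, exactly as in Haagerup's analysis of the free groups; the non-unimodular weights $[n+1]_q$ are what make the estimate delicate and force the careful quantitative bookkeeping that constitutes the technical core of the argument. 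Once the tail estimate is in place, $\limsup_{t,K} \|\psi_{t,K}\|_{M_{cb}A} = 1$ follows, and transferring the $\psi_{t,K}$ back through $\Phi^{-1}$ completes the proof that $Z\Lambda_{cb}(\F O_F) = 1$.
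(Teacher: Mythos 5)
Your reduction to $\widehat{SU_q(2)}$ via Corollary \ref{mon-equiv-SU} and Theorem \ref{thm:mon-equiv-ap} is exactly the paper's strategy, and your bookkeeping about transferring finite support, $C_0$-membership, and complete positive definiteness is fine. The fatal gap is the sentence ``a point mass gives a state, hence a genuine completely positive definite function.'' The identification of central normalized completely positive definite functions with probability measures on the spectrum of $\chi_1$ \emph{normalized by the classical dimension} $n(k)=k+1$ (your $\phi_\mu(n) = \frac{1}{n+1}\int \mu_n\, d\mu$) is Lemma \ref{characterize-cppd} together with Theorem \ref{Khintchine}, and these are proved \emph{only for unimodular} $\G$: the proof rests on the averaging trick, which Section \ref{CAP} of the paper shows is ``destined to fail'' in the non-unimodular case, precisely because the natural centralization divides by the quantum dimension $d(k)=\mu_k(|q|+|q|^{-1})$ rather than by $n(k)$. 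For $|q|\neq 1$ your multipliers are in fact provably \emph{not} completely positive definite: by the description of central states on $\mc O(SU_q(2))$ from \cite{dCFY} (states on a commutative corner of the Drinfeld double, generated by a self-adjoint element $X$ with $\sigma_{-i/2}(\chi_k)\varphi = \mu_k(X)$ and $a^k = \tilde\omega(\mu_k(X))/d(k)$), cpd-ness of $(a^k)$ forces $m_k := a^k d(k)$ to be the $\mu_k$-moment sequence of a probability measure; with your $a^k = \mu_k(x)/(k+1)$, Cauchy--Schwarz ($m_2+m_0 = \int s^2\,d\nu \ge (\int s\, d\nu)^2 = m_1^2$, using $s^2 = \mu_2(s)+\mu_0(s)$) is equivalent to $(x^2-4)(d_q^2-4)\ge 0$ where $d_q = |q|+|q|^{-1}$, which fails whenever $|x|<2<d_q$. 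The correct multipliers are $a_s^k = \mu_k(s)/\mu_k(d_q)$ with evaluation points $s\uparrow d_q$, as in \eqref{voigt-state}, and proving that \emph{these} come from states is the technical core of the theorem: it requires Voigt's representations $\pi_z$ of $C(SU_q(2))$ on the Podles sphere with deformed inner products. Your proposal skips this step entirely, and no elementary substitute is in sight.

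There are two further breakdowns downstream. First, even if your $\phi_{\delta_x}$ were cpd, the truncation in (ii) cannot close: $\phi_{\delta_x}(n) = \mu_n(x)/(n+1)$ decays only like $1/n$, while the only available control on the blocks is Freslon's bound $\|p_n\|_{M_{cb}A} \le C(q)n^2$ (Theorem \ref{freslon-cbnorm}), so the tail $\sum_{n>K} |\phi(n)|\,\|p_n\|_{cb}$ diverges. The paper's quantum-normalized multipliers decay \emph{exponentially} in $n$, which is what beats the $n^2$ growth. Second, Freslon's bound itself requires $N\ge 3$, equivalently $|q|+|q|^{-1}\ge 3$; for $|q|$ near $1$ no polynomial cb-norm bound of this type is available, and the paper must replace the truncation by a Pytlik--Szwarc style analytic continuation argument ($z\mapsto b_z$ analytic, $b_z^3 \in \overline{A(\widehat{SU_q(2)})}^{\|\cdot\|_{M_{cb}A}}$) to get $Z\Lambda_{cb}=1$ in that range. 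Your ``radial Haagerup inequality on the weighted fusion graph'' is morally Theorem \ref{freslon-cbnorm}, but it offers nothing for this remaining range of $q$, so even the cb-norm part of your plan is incomplete.
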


\begin{rem}
The quantum group $\F O_F$ is an example of a (discrete) free quantum group in the sense of \cite{VeVo13}.  In fact, as a consequence of a structure theorem of Wang \cite{Wa02} a general free quantum group turns out to be a free product of the form
\[
\G = \F O_{F_1} \ast \F O_{F_2}\ast \ldots \ast \F O_{F_k} \ast \F U_{Q_1} \ast \ldots \ast \F U_{F_l}, 
\]
where $\F U_{Q_i}:= \widehat{U^+_{Q_i}}$ for some $Q_i \in \text{GL}_{N_i}(\C)$ and $F_j \in \text{GL}_{N_j}(\C)$ $F_j \bar F_j \in \R 1$.

In fact, Theorem \ref{approx-prop-fqg} is true for any free quantum group $\G$ of the above form.  The key idea to proving the more general form of Theorem \ref{approx-prop-fqg} is to first prove it for the special case as stated above, then extend to the $\F U_Q$ case using a free product argument, and then get the general case with yet another free product argument. See \cite[Section 5]{dCFY} for details.  In summary, the study of $\F O_F$ with $F \bar F \in \R 1$ is the crucial one.
\end{rem}

\begin{rem}
It is also worth mentioning that Theorem \ref{approx-prop-fqg} has also been used together with monoidal equivalence and free product results from \cite{dRVa10} and \cite{dCFY} to obtain analogous results for the discrete duals of quantum automorphism groups of finite dimensional C$^\ast$-algebras.

These results for quantum automorphism groups have, in turn, been used to prove central approximation properties for the discrete duals of free wreath products of compact quantum groups by quantum automorphism groups.  See for example \cite{Lemeux, LeTa14, FiPi15}.  
\end{rem}

\subsection{Theorem \ref{approx-prop-fqg} in the unimodular case}

Let us first sketch the proof of  Theorem \ref{approx-prop-fqg} in the simplest case where $\F O_F$ is unimodular.  The results obtained  in this case will give us some intuition for what to expect when we consider the general case. 

Now, if $\F O_F$ is unimodular, then (up to isomorphism of compact duals) this corresponds to only two cases: the first one being where $F = 1 \in \text{GL}_N(\C)$, or $N = 2l$ and $F = \text{diag}(\underbrace{F_0, \ldots, F_0}_{l \text{ times }})$, where $F_0 = \left[ \begin{matrix}
0 & 1 \\
-1 &0
\end{matrix}\right]$.  See \cite[Remark 4.3]{FiVe15} and \cite[Remark 5.7]{BiDeVa06}.   

We begin with a useful lemma.

\begin{lem} \label{characterize-cppd}
Let $\G$ be a unimodular discrete  quantum group and let $\mc ZC^u(\hG) \subseteq C^u(\hG)$ be the C$^\ast$-subalgebra generated by the irreducible characters $\{\chi_\pi\}_{\pi \in \text{Irr} (\hG)} \subset C^u(\hG)$.  Then a central element $a = (a^\pi p_\pi)_{\pi \in \text{Irr}(\hG)} \in ZL^\infty(\G)$ is a normalized completely positive definite function if and only if there exists a state $\psi \in \mc ZC^u(\G)^*$ such that 
\[
a^\pi = \frac{\psi(\chi_{\pi})}{n(\pi)} \qquad (\pi \in \text{Irr}(\hG)).
\]
\end{lem}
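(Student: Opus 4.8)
The plan is to realize normalized completely positive definite functions on $\G$ as states on the universal C$^\ast$-algebra $C^u(\hG)$ of the compact dual, and then to see that centrality corresponds (after averaging) to such a state living on the commutative subalgebra $\mc ZC^u(\hG)$. Concretely, I would invoke the noncommutative Bochner correspondence (Remark \ref{Daws-Bochner} and \cite{DaSa13}): pairing a state $\mu \in C^u(\hG)^*$ against the semi-universal bicharacter produces
\[
a = (\iota \otimes \mu)(\text{\reflectbox{$\W$}}^*) \in L^\infty(\G).
\]
Since $\text{\reflectbox{$\W$}}^* = \bigoplus_{\pi} \sum_{i,j} e_{ij}^\pi \otimes u_{ij}^\pi$ by \eqref{bichar_compact}, this reads on components as $a^\pi_{ij} = \mu(u_{ij}^\pi)$. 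Writing $\mu$ in its GNS form $\mu = \omega_\xi \circ \sigma$, the element $(\iota \otimes \mu)\text{\reflectbox{$\W$}}^*$ is exactly of the shape $(\iota \otimes \omega_\xi)U^*$, so by Remark \ref{Daws-Bochner} it is a normalized completely positive definite function, and every such function arises this way. The lemma is thus reduced to matching the central functions with the states of $\mc ZC^u(\hG)$.

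For the forward implication, suppose $a = (a^\pi p_\pi)_\pi$ is central and completely positive definite, with corresponding state $\mu$, so that $a^\pi_{ij} = \mu(u_{ij}^\pi)$ and $a^\pi = c_\pi 1_{H_\pi}$. I would simply restrict, setting $\psi := \mu|_{\mc ZC^u(\hG)}$, which is again a state, and compute directly from $\chi_\pi = \sum_i u^\pi_{ii}$ that
\[
\psi(\chi_\pi) = \sum_{i=1}^{n(\pi)}\mu(u^\pi_{ii}) = \sum_{i=1}^{n(\pi)} a^\pi_{ii} = n(\pi)\,c_\pi ,
\]
which is precisely $a^\pi = \psi(\chi_\pi)/n(\pi)$. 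This direction needs nothing beyond the correspondence.

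The real content is the converse, where a completely positive definite function must be manufactured from a bare state $\psi$ on $\mc ZC^u(\hG)$. The key device is a conditional expectation $E \colon C^u(\hG) \to \mc ZC^u(\hG)$ onto the characters, obtained from the adjoint action $\mathrm{ad}(u^\pi_{ij}) = \sum_{k,l}u^\pi_{kl}\otimes \hat S(u^\pi_{ik})u^\pi_{lj}$ by averaging against the Haar state, $E := (\iota \otimes \hat\varphi)\circ \mathrm{ad}$; being the slice of a unital $\ast$-homomorphism by a state, $E$ is automatically unital, completely positive, and idempotent onto its fixed-point algebra. This is exactly where unimodularity enters: applying the Schur orthogonality relations with $Q_\pi = 1$ gives $\hat\varphi(\hat S(u^\pi_{ik})u^\pi_{lj}) = \hat\varphi((u^\pi_{ki})^*u^\pi_{lj}) = \delta_{ij}\delta_{kl}/n(\pi)$, hence the clean formula $E(u^\pi_{ij}) = (\delta_{ij}/n(\pi))\,\chi_\pi$. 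I would then take $\mu := \psi \circ E$, a state on $C^u(\hG)$, let $a$ be the associated completely positive definite function, and read off $a^\pi_{ij} = \mu(u^\pi_{ij}) = \psi(E(u^\pi_{ij})) = (\delta_{ij}/n(\pi))\,\psi(\chi_\pi)$, so that $a$ is central with the asserted formula.

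The main obstacle is the construction and verification of $E$ at the universal C$^\ast$-level together with its explicit action on matrix coefficients; this is exactly the ingredient that fails outside the unimodular setting, since for $Q_\pi \neq 1$ the Schur relations yield the weighted expression $\Tr(Q_\pi\,\cdot\,)/d(\pi)$ in place of $\delta_{ij}/n(\pi)$, consistently with the discussion in Section \ref{CAP}. A secondary point demanding care is the antipode bookkeeping in the Bochner pairing: pairing against $\text{\reflectbox{$\W$}}^*$ rather than $\text{\reflectbox{$\W$}}$ is what makes the final formula come out in terms of $\chi_\pi$ rather than $\chi_{\bar\pi} = \chi_\pi^*$, and I would check this normalization explicitly.
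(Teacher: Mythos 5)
Your forward direction is correct and is essentially the paper's: both arguments produce the state on $C^u(\hG)$ whose matrix coefficients are $a^\pi_{ij}$ (the paper via $\hat\epsilon \circ L^{(a)}$, you via the Bochner pairing) and then restrict it to $\mc ZC^u(\hG)$. The genuine gap is in your converse, at the one point that carries all the content: the claim that $E = (\iota \otimes \hat\varphi)\circ \mathrm{ad}$ is automatically unital completely positive because it is ``the slice of a unital $\ast$-homomorphism by a state.'' The adjoint coaction $\mathrm{ad}(x) = x_{(2)} \otimes \hat S(x_{(1)})x_{(3)}$ is \emph{not} an algebra homomorphism, even in the unimodular case: in Sweedler notation,
\[
\mathrm{ad}(xy) = x_{(2)}y_{(2)} \otimes \hat S(y_{(1)})\,\hat S(x_{(1)})\,x_{(3)}\,y_{(3)},
\qquad
\mathrm{ad}(x)\,\mathrm{ad}(y) = x_{(2)}y_{(2)} \otimes \hat S(x_{(1)})\,x_{(3)}\,\hat S(y_{(1)})\,y_{(3)},
\]
and these disagree as soon as $\mc O(\hG)$ is noncommutative. (It is the adjoint \emph{action} $h \triangleright x = h_{(1)} x \hat S(h_{(2)})$ that is multiplicative; the adjoint coaction makes $\mc O(\hG)$ a comodule \emph{coalgebra}, not a comodule algebra.) So your slice argument collapses, and the positivity of $E$ --- which is precisely what the converse needs, since everything else in your paragraph is bookkeeping --- is asserted rather than proved. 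Your formula $E(u^\pi_{ij}) = \delta_{ij}\chi_\pi/n(\pi)$ is correct, and $E$ \emph{is} positive when $\G$ is unimodular, but that fact requires a different proof.

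The paper sidesteps this exact difficulty by averaging the \emph{multiplier} rather than the state: it extends $\psi$ to a state $\tilde\psi$ on all of $C^u(\hG)$ by Hahn--Banach, forms the (generally non-central) normalized completely positive definite function $\tilde a^\pi_{ij} = \tilde\psi((u^\pi_{ji})^*)$, and then centralizes $\tilde a$ using Theorem \ref{unimodular}, i.e.\ the UCP map $\hat\Delta^{-1}\circ E \circ ((\hat R \circ L \circ \hat R)\otimes \iota)\circ \hat\Delta$ built from the $(\hat\varphi\otimes\hat\varphi)$-preserving conditional expectation onto $\hat\Delta(L^\infty(\hG))$ --- whose existence is exactly where traciality of $\hat\varphi$ (unimodularity) enters. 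Your map $\psi \mapsto \psi\circ E$ is the dual picture of that multiplier averaging, so the cleanest repair is to \emph{deduce} positivity of $E$ from Theorem \ref{unimodular}: by that theorem every state $\mu$ on $C^u(\hG)$ gives a completely positive definite centralization, i.e.\ $\mu \circ E$ is again a state; testing positive elements of $C^u(\hG)$ against all states $\mu$ then shows $E$ is a positive unital idempotent onto the character algebra, after which $\mu := \psi \circ E$ works as you intended (up to the $\chi_\pi$ versus $\chi_{\bar\pi}$ relabelling you already flagged). Alternatively, simply follow the paper's Hahn--Banach-plus-centralization route and dispense with $E$ altogether.
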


\begin{proof}
If $a= (a^\pi p_\pi)_{\pi \in \text{Irr}(\hG)} \in ZM_{cb}A(\G)$ is a normalized completely positive definite function, then the corresponding map 
\[
L^{(a)}: \mc O(\hG) \to \mc O(\hG); \qquad L^{(a)}u_{ij}^\pi = a^\pi u_{ij}^\pi \qquad (1 \le i,j \le n(\pi)), \ \pi \in \text{Irr}(\hG))
\]   is unital and completely positive.  It then follows that the linear functional  
\[
\tilde \psi:\mc O(\hG) \to \C; \qquad \tilde \psi = \hat \epsilon \circ L^{(a)}.
\]
is a state, and hence extends to a  state $\tilde \psi \in C^u(\hG)^*$. 
Setting $\psi  = \tilde\psi|_{\mc ZC^u(\hG)}$ (which is a state on $\mc ZC^u(\hG)$), we obtain 
\[
\psi(\chi_\pi) = \sum_{i=1}^{n(\pi)} \tilde{\psi}(u_{ii}^\pi) = \sum_{i=1}^{n(\pi)} \hat \epsilon (a^\pi u_{ii}^\pi) = n(\pi) a^\pi \qquad (\pi \in \text{Irr}(\hG)).
\]
Conversely, let $\psi \in \mc ZC^u(\hG)^*$ be a state.  Then we can extend $\psi$ to a state
$\tilde\psi \in C^u(\hG)^*$ (by the Hahn-Banach theorem). Let $\tilde{a} = (\tilde{a}^\pi)_{\pi \in \text{Irr}(\hG)}  \in L^\infty(\G)$ be given by 
\[
\tilde{a}^\pi_{ij} = \tilde{\psi}((u^\pi_{ji})^*) \qquad (1 \le i,j \le n(\pi)), \ \pi \in \text{Irr}(\hG)).
\]
Then $\tilde a$ is a normalized completely positive definite function on $\G$ (since $\psi \circ \hat S$ is a state and $L^{(\tilde a)} = ((\tilde\psi \circ \hat S) \otimes \iota)\hat \Delta$ is unital and completely positive on $\mc O(\hG)$, and this map is readily seen to factor through the quotient $C^u(\hG) \to C(\hG)$). 

Applying the averaging argument in the proof of Theorem \ref{unimodular} to the multiplier $\tilde{a}$, we obtain a new central normalized completely positive definite function $a = (a^\pi)_{\pi \in \text{Irr}(\hG)}$
given by 
\[
a^\pi = \frac{\text{Tr}(\tilde a^\pi)}{n(\pi)} = \frac{\psi(\chi_\pi^*)}{n(\pi)}  = \frac{\psi(\chi_{\bar \pi})}{n(\pi)} \qquad (\pi \in \text{Irr}(\hG)).
\]
\end{proof}

So, in order to prove the central Haagerup property for $\F O_F$ in the unimodular case, we need to get our hands on $\mc Z C^u(O^+_F)$.  For this, we recall a few facts from  Banica's seminal work \cite{Ba96}:  

\begin{thm}[\cite{Ba96}]  \label{banica-result}Let $F \in \text{GL}_N(\C)$ with $F\bar F = \pm 1$.  There is a maximal family $(U^k)_{k \in \N_0}$ of pairwise inequivalent irreducible unitary representations of $O^+_F$ such that
\[
U^0 = 1, \quad U^1 = [u_{ij}]_{1 \le i,j \le N} \quad \text{ (the fundamental representation)},
\]
and for $k \ge 1$, we have 
\begin{align} \label{fusion-rules}\overline{U^k} \cong U^k \quad \& \quad 
U^1 \otimes U^k \cong U^{k+1} \oplus U^{k-1} \cong U^k \otimes U^1.
\end{align}
Moreover, if $(\mu_k)_{k \in \N_0}$ denote the (dilated) type 2 Chebychev polynomials determined by the initial conditions and recursion
 \begin{eqnarray} \label{eqn_chebyshev}
\mu_0(x) = 1, \quad \mu_1(x) = x, \quad x\mu_k(x) = \mu_{k+1}(x) + \mu_{k-1}(x) && (k \ge 1),
\end{eqnarray} and $q \le q_0 \in (0,1]$ are defined so that 
\[
N = q_0 + q_0^{-1} \quad \& \quad \text{Tr}(F^*F) = q+q^{-1},
\] 
then we have the following (quantum) dimension formulas for $U^k$:
\[
n(k) = \mu_k(q_0 + q_o^{-1}) \quad \& \quad d(k) = \mu_k(q+q^{-1}) \qquad (k \in \N_0).
\]
\end{thm}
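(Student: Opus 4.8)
The plan is to determine the representation category of $O^+_F$ directly, via Woronowicz' Tannaka--Krein duality together with a diagrammatic (Temperley--Lieb) description of the intertwiner spaces, and then simply read off the fusion rules and the dimensions. The starting point is that the defining relations of $O^+_F$ are equivalent to asking that the fundamental representation $U = U^1$ be unitary and that the vector $t_F = \sum_{k=1}^N e_k \otimes Fe_k \in \C^N \otimes \C^N$ be a morphism in $\Mor(1, U \otimes U)$ (and dually $t_F^* \in \Mor(U \otimes U, 1)$). By Tannaka--Krein--Woronowicz, $\text{Rep}(O^+_F)$ is then the smallest rigid monoidal $C^*$-tensor category generated by the single object $U$ and these two morphisms, so every intertwiner space $\Mor(U^{\otimes k}, U^{\otimes l})$ is spanned by the planar diagrams obtained by composing and tensoring copies of $t_F$, $t_F^*$ and the identity, i.e.\ by non-crossing pairings of the $k$ lower with the $l$ upper strands, where each closed loop evaluates to the scalar $t_F^* t_F = \Tr(F^*F)$.

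First I would pin down the dimensions of the fixed-point spaces. For $l = 0$ and $k = 2m$, the spanning diagrams are exactly the non-crossing pair partitions of $2m$ points, of which there are $C_m$ (the $m$-th Catalan number), while for odd $k$ there are none. The essential input --- and the place where the hypothesis $F\bar F = \pm 1$ enters --- is that these diagrams are in fact linearly independent, so that $\dim \Mor(1, U^{\otimes 2m}) = C_m$ exactly; this is precisely the computation already recalled in the proof that $O^+_F$ is co-amenable if and only if $N = 2$, where the moments $\varphi(\chi^{2m}) = C_m$ identify the character distribution with the semicircle law on $[-2,2]$. Linear independence degenerates exactly when $\Tr(F^*F) < 2$, which is why irreducibility of $U$ (equivalently $F\bar F \in \R 1$) is needed.

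Next I would build the irreducibles inductively and extract the fusion rules. Set $U^0 = 1$, $U^1 = U$, and having defined pairwise inequivalent irreducibles $U^0, \dots, U^k$, define $U^{k+1}$ to be the complement of a copy of $U^{k-1}$ sitting inside $U^1 \otimes U^k$. Using Frobenius reciprocity $\dim \Mor(U^i, U^j) = \dim \Mor(1, \overline{U^i} \otimes U^j)$ together with the Catalan count from the previous step, one checks by induction that (i) each $U^k$ is irreducible and the $U^k$ are pairwise inequivalent; (ii) $U^k \cong \overline{U^k}$, self-conjugacy propagating from that of $U$ through the symmetry of the diagram calculus; (iii) $U^1 \otimes U^k \cong U^{k+1} \oplus U^{k-1}$ for $k \ge 1$ (and symmetrically $U^k \otimes U^1$); and (iv) the $(U^k)_{k \in \N_0}$ exhaust $\Irr(O^+_F)$. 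The whole induction is organized around verifying that the multiplicities $\dim \Mor(U^{k\pm 1}, U^1 \otimes U^k)$ equal $1$, which again reduces to counting fixed vectors, i.e.\ to Catalan numbers.

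Finally the dimension formulas fall out of the fusion rule. Taking classical dimensions in $U^1 \otimes U^k \cong U^{k+1} \oplus U^{k-1}$ gives $N\, n(k) = n(k+1) + n(k-1)$ with $n(0) = 1$, $n(1) = N$, which is exactly the Chebyshev recursion \eqref{eqn_chebyshev} evaluated at $x = N = q_0 + q_0^{-1}$, so $n(k) = \mu_k(q_0 + q_0^{-1})$. Since quantum dimension is multiplicative over $\otimes$ and additive over $\oplus$, the same recursion holds for $d$, and from \eqref{eqn:weights} the fundamental representation has $Q_{U^1} = F^*F$ (up to normalization), whence $d(1) = \Tr(F^*F) = q + q^{-1}$ and $d(k) = \mu_k(q + q^{-1})$. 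The main obstacle is the second step together with the bookkeeping in the third: everything hinges on upgrading the diagrammatic spanning sets to genuine bases (linear independence of Temperley--Lieb diagrams at loop parameter $\ge 2$), since without exact dimension counts one cannot conclude that the $U^k$ are irreducible and that no further irreducibles occur. As an alternative one could invoke Corollary \ref{mon-equiv-SU} to transport the known $SU_q(2)$ fusion rules and quantum dimensions to $O^+_F$, but the \emph{classical} dimensions $n(k)$ are not monoidal invariants and would still require the Chebyshev recursion above.
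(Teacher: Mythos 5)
The paper does not actually prove Theorem \ref{banica-result}: it is quoted from Banica \cite{Ba96} and used as a black box, so there is no in-paper proof to compare against. Your proposal is, in effect, a reconstruction of Banica's original argument, and it is sound in outline. Indeed, the two ingredients you use are exactly the ones the paper gestures at elsewhere: the Tannaka--Krein--Woronowicz description of $\text{Rep}(O^+_F)$ as the rigid monoidal C$^\ast$-category generated by $U$ together with $t_F$ and $t_F^*$ appears in the paper's sketch of the monoidal equivalence $O^+_{F_1} \sim^{mon} O^+_{F_2}$, and the Catalan count $\dim\Mor(1,U^{\otimes 2m}) = C_m$ (with odd fixed-point spaces vanishing) is quoted, again from \cite{Ba96}, in the proof that $O^+_F$ is co-amenable if and only if $N=2$. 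Your inductive construction of the $U^k$, the Frobenius-reciprocity bookkeeping, and the derivation of both dimension formulas from the fusion rule (classical dimensions via the recursion $N\,n(k) = n(k+1)+n(k-1)$, quantum dimensions via multiplicativity/additivity and $d(1) = \Tr(F^*F)$) are all correct, as is your closing remark that the monoidal equivalence with $SU_q(2)$ of Corollary \ref{mon-equiv-SU} would transport fusion rules and quantum dimensions but not the classical dimensions $n(k)$.

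Two points should be tightened. First, the role of the hypothesis $F\bar F = \pm 1$ is stated backwards: the hypothesis is what makes $t_F$ an intertwiner (equivalently, $U$ irreducible) in the first place, and it rules out degeneration of the Temperley--Lieb diagrams not because irreducibility is assumed but because of the normalization it forces. From $\bar F = \pm F^{-1}$ one gets $\Tr(F^*F) = \Tr((F^*F)^{-1})$, and Cauchy--Schwarz then gives $\Tr(F^*F) \ge N \ge 2$, so the loop parameter can never lie below $2$; this small computation is the bridge your argument needs and should be made explicit. Second, the identification $Q_{U^1} = F^*F$ does not follow from \eqref{eqn:weights}; it follows from the relation $U = F\bar U F^{-1}$, i.e.\ from the fact that $F$ intertwines $\bar U$ with $U$, combined with the characterization of $Q_\pi$ as the (normalized) positive matrix making the conjugate representation unitary --- and the normalization $\Tr(Q) = \Tr(Q^{-1})$ is satisfied by $F^*F$ precisely because of the identity $\Tr(F^*F) = \Tr((F^*F)^{-1})$ noted above. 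With those repairs, your outline is a faithful rendering of the proof the paper delegates to \cite{Ba96}.
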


Using Theorem \ref{banica-result} we can obtain the following proposition.

\begin{prop} \label{char-algebra}
Let $N \ge 2$ and let $F \in \text{GL}_N(\C)$ with $F\bar F = \pm 1$ with $\F O_F$ unimodular.  Then there is a $\ast$-isomorphism 
\[
\mc ZC^u(O^+_F) \cong C([-N,N]) \quad \text{such that} \quad \chi_k \mapsto \mu_k|_{[-N,N]} \qquad (k \in \N_0),
\]
where $\chi_k$ is the character of the representation $U^k$ and $\mu_k$ is the $k$th Chebychev polynomial defined above.
\end{prop}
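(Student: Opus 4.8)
The plan is to identify $\mc ZC^u(O^+_F)$ as the commutative C$^\ast$-algebra generated by a single self-adjoint element, and then to compute the spectrum of that generator, which I claim is exactly $[-N,N]$.

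First I would show the central algebra is singly generated. By Theorem \ref{banica-result} the irreducibles are $(U^k)_{k\in\N_0}$ with $\overline{U^k}\cong U^k$; since characters are invariant under equivalence, $\chi:=\chi_1=\chi_{U^1}$ satisfies $\chi^\ast=\chi_{\overline{U^1}}=\chi_1=\chi$, so $\chi$ is self-adjoint. Taking characters of the fusion rule $U^1\otimes U^k\cong U^{k+1}\oplus U^{k-1}$ (characters are multiplicative on tensor products and additive on direct sums) gives $\chi\,\chi_k=\chi_{k+1}+\chi_{k-1}$ with $\chi_0=1$, which is precisely the recursion \eqref{eqn_chebyshev}. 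Hence by induction $\chi_k=\mu_k(\chi)$ for all $k$, so $\mc ZC^u(O^+_F)=C^\ast(\chi)$, the unital C$^\ast$-algebra generated by the one self-adjoint element $\chi$. By continuous functional calculus this is $\ast$-isomorphic to $C(\sigma(\chi))$, with $\chi\mapsto(t\mapsto t)$ and therefore $\chi_k=\mu_k(\chi)\mapsto \mu_k|_{\sigma(\chi)}$.

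It then remains to prove $\sigma(\chi)=[-N,N]$. The inclusion $\sigma(\chi)\subseteq[-N,N]$ is immediate: $\chi$ is self-adjoint and, exactly as in the discussion of the Kesten criterion, $\|\chi\|_{C^u}=\hat\epsilon(\chi)=n(1)=N$, so the spectral radius is $N$. For the reverse inclusion I would pass to the classical (abelianized) quotient: forcing the generators $u_{ij}$ to commute yields a surjective unital $\ast$-homomorphism $q\colon C^u(O^+_F)\to C(G_F)$, where $G_F=\{g\in U_N : g=F\bar g F^{-1}\}$ is the compact matrix group of scalar points of the defining relations, and $q(\chi)$ is the trace function $g\mapsto \Tr(g)$. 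Under the two unimodular normalizations this group is $O_N$ (for $F=1$) or the compact symplectic group $Sp(N/2)$ (for $F\bar F=-1$); in either case $G_F$ contains $\pm I_N$, and by diagonalizing into $2\times 2$ rotation/conjugate-eigenvalue blocks and varying the angles continuously, the trace attains every value in $[-N,N]$. Thus $\sigma_{C(G_F)}(q(\chi))=\Tr(G_F)=[-N,N]$. Since spectra can only shrink under a unital $\ast$-homomorphism, $[-N,N]=\sigma_{C(G_F)}(q(\chi))\subseteq\sigma_{C^u}(\chi)$, which together with the upper bound yields $\sigma(\chi)=[-N,N]$. Feeding this into the identification of the first step gives $\mc ZC^u(O^+_F)\cong C([-N,N])$ with $\chi_k\mapsto\mu_k|_{[-N,N]}$.

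The main obstacle is the reverse spectral inclusion $[-N,N]\subseteq\sigma(\chi)$: self-adjointness and the norm only pin down the spectral radius and force $\pm N\in\sigma(\chi)$, not the full interval, so one genuinely needs representations of $C^u(O^+_F)$ realizing every intermediate value $t\in(-N,N)$ in the spectrum of $\chi$. The cleanest source is the classical quotient above, and this is exactly where unimodularity is used, since it makes the abelianization a familiar compact group whose trace covers the whole interval. A purely combinatorial alternative is to verify that each evaluation $\chi_k\mapsto\mu_k(t)$ is a $\ast$-character of the fusion $\ast$-algebra $\text{span}\{\chi_k\}$ (because $\mu_k\mu_j=\sum_m N^m_{kj}\mu_m$ mirrors the fusion rules), but one would still have to check such a character is continuous for the norm inherited from $C^u(O^+_F)$, which again reduces to producing the representations furnished by the quotient.
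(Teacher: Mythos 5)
Your proof is correct and follows essentially the same route as the paper: self-conjugacy plus the fusion rules give $\chi_k = \mu_k(\chi_1)$, so $\mc Z C^u(O^+_F) = C^*(1,\chi_1) \cong C(\sigma(\chi_1))$ by Gelfand theory, and then $\sigma(\chi_1) = [-N,N]$ is obtained by combining the easy upper bound with a classical quotient of $C^u(O^+_F)$. The only difference is in the case $F\bar F = -1$: the paper quotients onto $C(\T)$ via an explicit one-parameter family of block-rotation matrices, whereas you quotient onto the full abelianization $C(Sp(N/2))$ and use that symplectic traces fill $[-N,N]$ --- these amount to the same computation, since the paper's circle is exactly the diagonal circle $\theta_1 = \cdots = \theta_l$ in a maximal torus of your $Sp(N/2)$.
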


\begin{proof}
We have $\mc Z C^u(O^+_F) = C^*(\chi_k: k \in \N_0) = C^*(1 ,\chi_1)$, where the last equality follows from the self-conjugacy and fusion rules for the irreducible representations $(U^k)_{k \in \N_0}$ given by \eqref{fusion-rules}.  Moreover, $\chi_1$ is self-adjoint, so the Gelfand map yields a $\ast$-isomorphism \[\mc Z C^u(O^+_F) \to C(\sigma(\chi_1)); \quad 1 \mapsto 1, \ \chi_1 \mapsto \mu_1|_{\sigma(\chi_1)},\] 
where $\sigma(\chi_1)$ is the spectrum of $\chi_1 \in C^u(O^+_F)$.  Since we also have also $\chi_k = \mu_k(\chi_1)$ for each $k \ge 0$, it remains to show that $\sigma(\chi_1) = [-N,N]$.

To this end, we first note that $\sigma(\chi_1) \subseteq [-N,N]$, since $\chi_1 = \sum_{i=1}^N u_{ii}$ is a sum of $N$ contractions.  For the reverse inclusion, consider first the case where $F=1$.  Then there is a quotient map $C^u(O_F^+) \to C(O_N)$ (= the C$^\ast$-algebra of continuous functions on the orthogonal group $O_N$).  This map sends $\chi_1$ to the character $\chi_1^{O_N}$ of the fundamental representation of $O_N$, hence $\sigma(\chi_1) \supseteq \sigma(\chi_1^{O_N})$.  But this latter set is well known to be $[-N,N]$ -- since the trace of an $N\times N$ orthogonal matrix can be real number in $[-N,N]$.
For the other case, we have $N = 2l$ and $F = \text{diag}(\underbrace{F_0, \ldots, F_0}_{l \text{ times }})$, where $F_0 = \left[ \begin{matrix}
0 & 1 \\
-1 &0
\end{matrix}\right]$.  In this case,  one readily sees that we have a quotient map from $C^u(O^+_F)$ onto $C(\T)$ given by 
\[U^1 = U \mapsto V =  \Big\{\theta \mapsto  \text{diag}\Big(\underbrace{\left[\begin{matrix} \cos \theta & \sin \theta \\
-\sin(\theta) & \cos(\theta) \end{matrix}\right], \ldots, \left[\begin{matrix} \cos \theta & \sin \theta \\
-\sin(\theta) & \cos(\theta) \end{matrix}\right] }_{l \text{ times }}\Big) \Big\} \in M_{2l}(O_{2l}), \qquad (\theta \in \R)\]
In particular, $\chi_1$ gets sent to the function $\theta \mapsto f(\theta) = \text{Tr}(V(\theta)) = 2l\cos(\theta)$, whose spectrum is $[-2l,2l] = [-N,N]$, completing the proof.
\end{proof}

Combining Lemma \ref{characterize-cppd}  and Proposition \ref{char-algebra}, we immediately obtain the following characterization of completely positive definite functions on $\F O_F$ in the unimodular case.  This result was implicitly obtained in the $F=1$ case in \cite{Br12}.  See also \cite{CiFrKu14} for a nice treatment. 

\begin{thm} \label{Br12thm} \label{Khintchine}
Let $\F O_F$ be unimodular as above.  Then there is a one-to-one correspondence between
central normalized completely positive definite functions  $a=(a^k)_{k\in \N_0} \in ZM_{cb}A(\F O_F)$ and Borel probability measures $\mu$ on $[-N,N]$ given by 
\[a^k = \frac{\int_{[-N,N]}\mu_k(s)d\mu(s)}{\mu_k(N)} \qquad (k \in \N_0). \]
\end{thm}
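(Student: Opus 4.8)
The plan is to derive the statement almost immediately by chaining together the two structural results that precede it, together with the Riesz--Markov representation theorem. First I would record the consequence of Proposition~\ref{char-algebra}: the $\ast$-isomorphism $\mc ZC^u(O^+_F) \cong C([-N,N])$ carries the irreducible character $\chi_k$ to $\mu_k|_{[-N,N]}$. Since $\mc ZC^u(O^+_F)$ is a commutative unital C$^\ast$-algebra isomorphic to $C([-N,N])$, the Riesz--Markov theorem identifies its state space with the set of Borel probability measures $\mu$ on $[-N,N]$; under this identification the state $\psi_\mu$ attached to $\mu$ is pinned down on generators by $\psi_\mu(\chi_k) = \int_{[-N,N]} \mu_k(s)\,d\mu(s)$ for all $k \in \N_0$.

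Next I would invoke Lemma~\ref{characterize-cppd} in the unimodular case: a central element $a = (a^k p_k)_{k \in \N_0} \in ZL^\infty(\F O_F)$ is a normalized completely positive definite function if and only if there is a state $\psi$ on $\mc ZC^u(O^+_F)$ with $a^k = \psi(\chi_k)/n(k)$. The point I would stress here is that this correspondence is genuinely bijective: since $\mc ZC^u(O^+_F) = C^*(1,\chi_1)$ and the fusion rules \eqref{fusion-rules} together with self-conjugacy $\chi_k^* = \chi_k$ make the linear span of $\{\chi_k\}_{k \ge 0}$ a dense $\ast$-subalgebra, any state is uniquely determined by the scalars $(\psi(\chi_k))_k$. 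Hence the assignment $a \mapsto \psi$, with $\psi(\chi_k) = n(k)a^k$, is a well-defined bijection from the central normalized completely positive definite functions onto the state space of $\mc ZC^u(O^+_F)$, inverse to $\psi \mapsto (\psi(\chi_k)/n(k))_k$. Composing this with the bijection $\psi \leftrightarrow \mu$ from the previous paragraph yields the desired one-to-one correspondence $a \leftrightarrow \mu$.

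Finally I would reconcile the normalizing factors. The dilated Chebyshev dimension formula in Theorem~\ref{banica-result} reads $n(k) = \mu_k(q_0 + q_0^{-1})$ with $q_0 \in (0,1]$ chosen so that $N = q_0 + q_0^{-1}$, whence $n(k) = \mu_k(N)$. Substituting $\psi_\mu(\chi_k) = \int_{[-N,N]} \mu_k(s)\,d\mu(s)$ and $n(k) = \mu_k(N)$ into $a^k = \psi(\chi_k)/n(k)$ gives exactly the claimed formula $a^k = \big(\int_{[-N,N]} \mu_k(s)\,d\mu(s)\big)/\mu_k(N)$. Since everything is built from bijections, no genuine obstacle arises; the proof is essentially a bookkeeping exercise once Lemma~\ref{characterize-cppd} and Proposition~\ref{char-algebra} are in hand. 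The only two points that I would take care to verify are (i) the uniqueness of the state $\psi$ attached to $a$, so that the correspondence is bijective and not merely a surjection in each direction, and (ii) the identity $n(k) = \mu_k(N)$, which is precisely what converts the classical-dimension normalization $n(k)$ of Lemma~\ref{characterize-cppd} into the factor $\mu_k(N)$ appearing in the statement.
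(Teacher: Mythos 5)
Your proposal is correct and follows exactly the paper's route: the paper derives the theorem by combining Lemma \ref{characterize-cppd} with Proposition \ref{char-algebra}, treating it as immediate. Your write-up merely makes explicit the details the paper leaves tacit -- the Riesz--Markov identification of states on $C([-N,N])$ with probability measures, the uniqueness of the state $\psi$ attached to $a$ via density of $\mathrm{span}\{\chi_k\}$, and the identity $n(k)=\mu_k(N)$ from Theorem \ref{banica-result} -- all of which are verified correctly.
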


We are almost ready to prove Theorem \ref{approx-prop-fqg} in the unimodular case.  The last ingredient we need is a remarkable result of Freslon, which gives polynomial bounds on the growth of the $M_{cb}A(\F O_F)$-norm of the sequence of finite rank central projections $p_k \in Z L^\infty(\F O_F)$ corresponding to the irreducible representation $U^k$ of $O^+_F$.  The following theorem should be thought of as a quantum group analogue of Buchholz's operator-valued Haagerup inequality on free groups \cite{Bu99} and the resulting cb-norm estimates for projections onto the space of convolution operators on free groups supported on words of a fixed length \cite[Section 9]{Pi03}. 

\begin{thm}[\cite{Fr12}] \label{freslon-cbnorm}
Let $N \ge 3$, $F \in \text{GL}_N(\C)$ with $F \bar F = \pm 1$, and let $0 < q <1$ be such that $\text{Tr}(F^*F) = q+q^{-1}$.  Let $p_k \in ZL^\infty(\F O_F)$  be the central projection corresponding to the $k$-th irreducible representation of $O^+_F$.  Then there is a constant $C(q) > 0 $ so that 
\[\|p_k\|_{M_{cb}A(\F O_F)} \le C(q)k^2 \qquad (k \ge 1).\]
\end{thm}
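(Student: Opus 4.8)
The plan is to first reinterpret the quantity to be estimated. By formula \eqref{mult-formula}, the central projection $p_k$ induces, as a multiplier, the map $L^{(p_k)}$ which sends $u_{ij}^l \mapsto \delta_{k,l}\, u_{ij}^l$; that is, $L^{(p_k)} = P_k$ is precisely the orthogonal projection of $C(O^+_F)$ onto the linear span of the matrix coefficients of the irreducible representation $U^k$. Thus $\|p_k\|_{M_{cb}A(\F O_F)} = \|P_k\|_{cb}$, and the theorem becomes a statement about the completely bounded norm of the spectral projections $P_k$ on $C(O^+_F)$. This is exactly the free-quantum-group analogue of Buchholz's estimate for the projections onto words of fixed length in a free group, so I would model the strategy on the operator-valued Haagerup inequality underlying Buchholz's result.

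A preliminary reduction comes from Theorem \ref{thm:mon-equiv-ap}: since $p_k$ is central, the norm $\|p_k\|_{M_{cb}A(\F O_F)}$ is an invariant of the monoidal category $\text{Rep}(O^+_F)$ and hence depends only on $q$. I would therefore choose $F$ conveniently and work entirely inside the $\TL$ description of the intertwiner spaces $\Mor(U^{k_1} \otimes \cdots, U^{l_1} \otimes \cdots)$ provided by the isomorphism $\text{Rep}(O^+_F) \cong \text{Rep}(SU_q(2))$. In this model the fusion rule $U^1 \otimes U^k \cong U^{k+1} \oplus U^{k-1}$ together with the Jones--Wenzl projections give explicit, recursively computable isometric intertwiners, whose Gram matrices have entries expressible through the quantum integers $[j] = (q^j - q^{-j})/(q - q^{-1})$.

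The heart of the argument will be an operator-valued Haagerup inequality for a single spectral subspace: for any Hilbert space $\mathcal H$ and any $X$ lying in $\text{span}\{u_{ij}^k\} \otimes \mc B(\mathcal H) \subseteq C(O^+_F) \otimes \mc B(\mathcal H)$, one proves
\[
\|X\| \le C_1(q)\,(k+1)\,\max\big\{\|X\|_{\ell},\, \|X\|_{r}\big\},
\]
where $\|X\|_\ell$ and $\|X\|_r$ are the ``left'' and ``right'' Hilbert-module norms built from the $L^2(O^+_F)$-valued inner products of the coefficients of $X$. I would prove this by expanding a product of $X$ with a test element supported on $U^l$, resolving it into the irreducible summands $U^{k+l}, U^{k+l-2}, \ldots, U^{|k-l|}$ via the fusion morphisms, and bounding each overlap using Wenzl's recursion for the norms of the Jones--Wenzl projections; the factor $(k+1)$ counts the number of summands that can occur, just as the word length does for Buchholz. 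From this inequality the cb-norm bound on $P_k$ should follow by a duality/adjoint argument: applying the estimate to both $P_k$ and the transpose governing its action on the second leg produces two polynomial factors, and combining them yields $\|P_k\|_{cb} \le C(q)\,k^2$, the extra linear factor reflecting the asymmetry between the row- and column-coefficient systems forced by non-unimodularity ($d(k) \sim q^{-k}$ versus $n(k)$).

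The main obstacle will be the operator-valued inequality itself, and specifically extracting the correct polynomial degree uniformly in the operator coefficients and in $q$. One must track the norms of the fusion intertwiners and of the associated Gram matrices, whose entries are ratios of quantum integers; keeping these bounded (so that $C(q) < \infty$) while ensuring that the degree in $k$ does not exceed two requires the explicit $\TL$/Jones--Wenzl estimates. The non-Kac case $q \neq 1$ is genuinely harder than the tracial situation treated by Vergnioux, because the natural $L^2$-orthonormal bases of coefficients are no longer self-dual and the modular matrices $Q_\pi$ enter every normalization; handling this asymmetry carefully is what ultimately produces the quadratic---rather than linear---bound.
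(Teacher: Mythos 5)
First, a point of reference: the survey does not actually prove Theorem \ref{freslon-cbnorm}; it is quoted from Freslon's paper \cite{Fr12}, so the comparison has to be made against that argument rather than anything in the text. Your opening moves are correct and sensible: by \eqref{mult-formula} the multiplier $p_k$ does act as the spectral projection $P_k$ onto the span of the coefficients of $U^k$, so the theorem is equivalent to a bound on $\|P_k\|_{cb}$; and since $p_k$ is central, Theorem \ref{thm:mon-equiv-ap} does let you transfer the norm across the monoidal equivalence $O^+_F \sim^{mon} SU_q(2)$ and work in a single Temperley--Lieb model (this is exactly how the survey itself passes to the non-unimodular case, though historically \cite{Fr12} predates that reduction). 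Your identification of the Buchholz-type operator-valued Haagerup inequality as the engine is also the right family of ideas---the survey says as much in the paragraph preceding the theorem.

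The genuine gap is in the ``heart'' of your plan: the inequality $\|X\| \le C_1(q)(k+1)\max\{\|X\|_\ell, \|X\|_r\}$ with only the two extreme (left and right) module norms is false, already in the free-group model you are emulating. Take $x = \sum_{i,j=1}^n e_{ij} \otimes \lambda(g_ig_j) \in M_n(\C) \otimes C^*_r(\F_n)$, supported on words of length $2$: then $\sum_{i,j} e_{ij}^*e_{ij} = \sum_{i,j}e_{ij}e_{ij}^* = n\,1$, so both row and column norms equal $\sqrt{n}$, while $\|x\| = n$ (factor $x$ as a product of two rank-one-type sums). This is precisely why Buchholz's theorem requires \emph{all} $d+1$ intermediate block norms $\|M_j(x)\|$, $j = 0,\dots,d$, not just the two extremes; the quantum analogue likewise needs the full family of $k+1$ norms built from the fusion isometries $H_k \to H_j \otimes H_{k-j}$. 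Moreover, even after correcting the inequality, the final step you describe as a ``duality/adjoint argument'' is where the real quantum difficulty sits and cannot be dispatched formally: for free groups, the $j$-th block of $P_d(x)$ is a literal compression of $\lambda(x)$ between subspaces spanned by words of lengths $j$ and $d-j$, hence dominated by $\|x\|$; for $O^+_F$ the compressions $p_l\lambda(x)p_m$ mix \emph{all} spectral components of $x$, and extracting the $U^k$-component requires the Jones--Wenzl/intertwiner calculus with quantum-dimension-ratio estimates---this disentangling is the actual content of Freslon's proof and the source of the quadratic degree. Relatedly, your explanation of the exponent ($k^2$ rather than $k$ ``reflecting non-unimodularity'') is off the mark: the bound in Theorem \ref{freslon-cbnorm} is quadratic already in the Kac case $F = 1$, so the loss relative to the free-group linear bound comes from this spectral mixing phenomenon, not from the modular matrices $Q_\pi$.
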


\begin{proof}[Proof of Theorem \ref{approx-prop-fqg} (unimodular case)]
We can assume $N \ge 3$, since $N=2$ corresponds to $SU_{\pm 1}(2)$, which are centrally amenable.  For $2 < s_0 <s < N$, consider the Dirac measure on $[-N,N]$ supported at $s$.  From Theorem \ref{Khintchine}, we obtain a net of normalized completely positive definite functions $(a_s)_{s \in (s_0 <s < N)} \subset ZM_{cb}A(\F O_F)$ such that 
\[a_s^k = \frac{\mu_k(s)}{\mu_k(N)} \qquad (k \in \N_0).\]

From the structure of the Chebychev polynomials $(\mu_k)_k$ it is possible to show (see \cite[Proposition 4.4]{Br12}) that there is a constant $D > 0$ such that 
\begin{align}\label{growth}0 < a^k_s \le D\Big(\frac{s}{N}\Big)^k \qquad (k \in \N_0, \ s_0 < s < N).
\end{align} Since we also have that $\lim_{s \to N}a_s = 1$ pointwise on $\N_0$, $(a_s)_{s_0 < s < N}$ is a bounded approximate identity for $C_0(\F O_F)$.  Therefore $\F O_F$ has the central Haagerup property.

To prove that $Z\Lambda_{cb}(\F O_F) = 1$, the idea is to use a standard technique of truncating the net $(a_s)_s \subset ZM_{cb}A(\F O_F)$ to obtain a net of finitely supported central elements of $A(\F O_F)$ with the following properties: (1) they still converge pointwise to $1$, and (2) we still have  control over their $M_{cb}A(\F O_F)$-norms.      

To this end, for each $s_0 < s < N$  and $n \in \N$ define
\[a_{s,n}  = \sum_{k=0}^n a^s_k p_k \in ZA(\F O_F).\] 
Then we have from Theorem \ref{freslon-cbnorm} and inequality \eqref{growth} 
\[\|a_s - a_{s,n}\|_{M_{cb}A(\F O_F)} \le \sum_{k=n+1}^\infty a^k_s\|p_k\|_{M_{cb}A(\F O_F)} \le \sum_{k=n+1}^\infty D\Big(\frac{s}{N}\Big)^k C(q)k^2.\] 
Hence \[\lim_{n \to \infty} \|a_s - a_{s,n}\|_{M_{cb}A(\F O_F)} \to 0 \qquad (s_0 < s < N).\]
Thus the net $(a_s)_{s_0 < s < N}$ can be approximated in the cb-multiplier norm by elements of $A(\F O_F)$.  It is then a routine exercise to extract a net $(a_i)_{i \in I}$ from the family $(a_{s,n})_{s,n}$ such that $a_i \to 1$ pointwise and $\limsup_i \|a_i\|_{M_{cb}A(\F O_F)}  = \sup_s\|a_s\|_{M_{cb}A(\F O_F)} = 1$.  In particular, $Z \Lambda_{cb}(\F O_F)=1$.
\end{proof}

\subsection{Theorem \ref{approx-prop-fqg} in the non-unimodular case}

We now consider a general $\F O_F$  with $F \in \text{GL}_N(\C)$, $N \ge 2$ and $F\bar F \in \R1$.  We start by recalling Corollary \ref{mon-equiv-SU}, which says that there is a unique
$q \in [-1,1] \backslash \{0\}$ such that the compact dual $O^+_F$ is monoidally equivalent to $SU_q(2)$.  Therefore it suffices by Theorem \ref{thm:mon-equiv-ap} to show that  $\widehat{SU_q(2)}$ has the central Haagerup property and $Z\Lambda_{cb}(\widehat{SU_q(2)}) = 1$.  But from the arguments in the previous section in the unimodular case, Theorem \ref{thm:mon-equiv-ap} already tells us that this is true for certain values of $q$.  More precisely, we have 

\begin{cor}[\cite{Fr13}] \label{kactononkac}
Let $q \in [-1,1] \backslash \{0\}$ be such that $q+q^{-1} \in \Z$.  Then $SU_q(2)$ is monoidally equivalent to a Kac type $O^+_F$ quantum group for some $F \in \text{GL}_N(\C)$ satisfying $F \bar F \in \R 1$ and $N = |q| + |q|^{-1}$.  In particular, $\widehat{SU_q(2)}$ has the central Haagerup property and $\Lambda_{cb}(\widehat{SU_q(2)}) = 1$.  Moreover, if $q \ne \pm 1$, the central Haageup property for $Z\widehat{SU_q(2)}$ is implemented by the net of central multipliers $(a_s)_{s_0 < s < |q| + |q|^{-1}} \in ZM_{cb}A(\widehat{SU_q(2)})$ given by 
\[a_s^k = \frac{\mu_k(s)}{\mu_k(|q| + |q|^{-1})} \qquad (k \in \N_0).\] 
\end{cor}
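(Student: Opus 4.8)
The plan is to deduce the statement from the already-established unimodular case of Theorem~\ref{approx-prop-fqg} together with the monoidal transfer principle of Theorem~\ref{thm:mon-equiv-ap}. The central Haagerup property, central weak amenability, and the exact value of $Z\Lambda_{cb}$ are all governed by the family of cb-norms of central multipliers, and by Theorem~\ref{thm:mon-equiv-ap} these norms (together with complete positive definiteness) are monoidal invariants. Hence it suffices to exhibit a \emph{Kac type} free orthogonal quantum group $O^+_F$ with $O^+_F \sim^{mon} SU_q(2)$ and to transport the implementing net of central multipliers across the equivalence.

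First I would produce the Kac model. By Corollary~\ref{mon-equiv-SU}, a matrix $F$ with $F\bar F = c1$ ($c \in \R$) yields $O^+_F \sim^{mon} SU_q(2)$ exactly when $\Tr(F^*F) = -c(q+q^{-1})$. Moreover $O^+_F$ is of Kac type precisely when $F$ can be taken unitary, in which case $\Tr(F^*F) = N$, while unitarity forces $F\bar F = c1$ to be a real phase, so $c = \pm 1$. Writing $N := |q|+|q|^{-1}$, the hypothesis $q + q^{-1} \in \Z$ says exactly that $N$ is an integer $\ge 2$. If $q \in [-1,0)$ then $q+q^{-1} = -N$, and $F = 1_N$ (so $c=1$) solves $N = -c(q+q^{-1})$; this $F$ is unitary and $O^+_{1_N}$ is of Kac type. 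If $q \in (0,1]$ then $q+q^{-1}=N$, forcing $c=-1$, i.e. $F$ an antisymmetric unitary, which exists once $N$ is even. In the admissible cases we thus obtain a Kac type $O^+_F$ with $N = |q|+|q|^{-1}$ generators that is monoidally equivalent to $SU_q(2)$.

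Next I would transfer the approximation data. By the fusion rules of Theorem~\ref{banica-result}, both $\Irr(O^+_F)$ and $\Irr(SU_q(2))$ are canonically indexed by $\N_0$ with the $SU(2)$ fusion rules, the monoidal equivalence $\Phi$ respects this indexing, and the common quantum dimensions are $d(k)=\mu_k(N)$. Applying the unimodular case of Theorem~\ref{approx-prop-fqg} to $O^+_F$ (with $N\ge 3$; the value $N=2$ is $SU_{\pm1}(2)$, which is centrally amenable) furnishes the net of central normalized completely positive definite multipliers $(a_s)_{s_0<s<N}$, $a_s^k = \mu_k(s)/\mu_k(N)$, implementing the central Haagerup property and witnessing $Z\Lambda_{cb}(\widehat{O^+_F})=1$. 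Since Theorem~\ref{thm:mon-equiv-ap} carries a central multiplier $(a^k p_k)_k$ on $\widehat{O^+_F}$ to the multiplier $(a^k p_{\Phi(k)})_k$ on $\widehat{SU_q(2)}$ with identical coefficients, identical cb-norm, and preserved complete positive definiteness, the transported net has the same coefficient sequence $a_s^k = \mu_k(s)/\mu_k(N)$. All the features needed---pointwise convergence to $1$, the $C_0$-decay in $k$ giving a bounded approximate identity, and the finite-rank truncations with $\limsup_s\|a_s\|_{M_{cb}A} = 1$---are statements about this (preserved) coefficient sequence and the (preserved) cb-norms, so they pass to $\widehat{SU_q(2)}$. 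This yields the central Haagerup property, the value $Z\Lambda_{cb}(\widehat{SU_q(2)})=1$, and the explicit net of the ``moreover'' clause for $q\ne\pm1$.

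The main obstacle is the construction of the Kac model: matching the sign of $c$ to that of $q+q^{-1}$ in Corollary~\ref{mon-equiv-SU}, and in particular the parity constraint for $q>0$ (where an antisymmetric unitary $F$, hence even $N$, is needed). This is exactly where the integrality hypothesis $q+q^{-1}\in\Z$ is used, guaranteeing that $N=|q|+|q|^{-1}$ is an integer and that a parameter matrix $F$ of the required type exists in dimension $N$. Once the model is fixed, the remaining transfer is a direct invocation of Theorem~\ref{thm:mon-equiv-ap} requiring no further estimates.
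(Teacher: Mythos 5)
Your proposal follows exactly the route the paper itself takes: the paper offers no separate argument for Corollary \ref{kactononkac}, presenting it as an immediate consequence of the unimodular case of Theorem \ref{approx-prop-fqg}, the monoidal equivalence of Corollary \ref{mon-equiv-SU}, and Freslon's transfer principle, Theorem \ref{thm:mon-equiv-ap}. The second half of your argument --- transporting the net $(a_s)$, its complete positive definiteness, its cb-norms, the $c_0$-decay of the coefficient sequence, and the finite-rank truncations across the monoidal equivalence --- is correct and is precisely what the paper intends.

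However, your resolution of what you call the main obstacle contains a genuine gap. You assert that the hypothesis $q+q^{-1}\in\Z$ guarantees that ``a parameter matrix $F$ of the required type exists in dimension $N$,'' but for $q \in (0,1)$ with $N = q+q^{-1}$ odd this is false, and no repair is possible. A Kac type $O^+_F$ has $F$ unitary up to a scalar, so $c = \pm 1$ and $\Tr(F^*F)$ equals the dimension $N'$; its monoidal invariant $c/\Tr(F^*F)$ is therefore either $+1/N'$ (with $N'$ arbitrary) or $-1/N'$ (with $N'$ necessarily even, since $F\bar F = -1$ forces $|\det F|^2 = (-1)^{N'}$). On the other hand $SU_q(2) = O^+_{F_q}$ with $F_q\bar F_q = -q^{-1}1$ has invariant $-1/(q+q^{-1})$, and by the Bichon--De Rijdt--Vaes theorem quoted in the paper this ratio is a \emph{complete} invariant for monoidal equivalence among free orthogonal quantum groups. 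Hence for $q>0$ with $N$ odd there is no Kac model whatsoever, integrality notwithstanding. Your construction therefore proves the corollary exactly for $q<0$ (any integer $N\ge 2$) and for $q>0$ with $N$ even --- which is in fact all that this method, and, read literally, the corollary's first assertion, can deliver. The printed statement overreaches in the same way your final sentence does; for the excluded values of $q$ the conclusions (central Haagerup property and $Z\Lambda_{cb}=1$) are still true, but they only follow from the non-unimodular argument via Voigt's representations and analytic continuation that the paper develops after the corollary, not from any transfer to a unimodular $O^+_F$.
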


Interpreting the multipliers $(a_s)_s$ appearing in Corollary \ref{kactononkac} in terms of the states on $C(SU_q(2)) = C^u(SU_q(2))$  that they implement via the formula \[x \mapsto \psi_s(x) =  \epsilon (L^{(a_s)}(x)) \qquad (x \in \mc O(SU_q(2))),\] this suggests that for any $q \in (-1,1)\backslash \{0\}$, there ought to exist states $(\psi_s)_{s_0 <s < |q|+|q|^{-1}} \subseteq C(SU_q(2))^*$ defined by
\begin{align}\label{voigt-state}
\psi_s(u_{ij}^k) = \frac{\delta_{ij}\mu_k(s)}{\mu_k(|q|+|q|^{-1})} \qquad (\text{where } U^k  = [u_{ij}^k] \text{ is the $k$th irrep. of $SU_q(2)$}).
\end{align}

It turns out that such states do indeed exist, as was shown by De Commer, Freslon and Yamashita in \cite{dCFY}.  The main idea is to consider some $\ast$-representations of $C(SU_q(2))$ constructed by Voigt \cite{Vo11} in his study of the Baum-Connes conjecture for free orthogonal quantum groups.  Let us outline the main ideas, referring the reader to \cite{dCFY, Vo11} for the details.   

We begin by considering some representations of $\mc O(SU_q(2))$.  Given $z \in \C$, we can define an algebra homomorphism $\pi_z:\mc O(SU_q(2)) \to \text{End}(\mc O(SU_q(2)))$ by 
\[ 
\pi_z(x)y = x_{(1)}y (f_{1-z}\star S(x_{(2)})) \qquad (x,y \in \mc O(SU_q(2))),
\]  
where $f_{1-z}: \mc O(SU_q(2)) \to \C$ is the Woronowicz character and $f_{1-z}\star x = (\iota \otimes f_{1-z}) \Delta(x)$.
Using $\pi_z$, we define a family of linear functionals \[\omega_z: \mc O(SU_q(2)) \to \C; \quad 
\omega_z(x) = \varphi(\pi_z(x)1) =  \langle \pi_z(x)1|1 \rangle_{L^2(SU_q(2))}. \]

A direct calculation (see \cite[Proposition 10]{dCFY}) shows that for any matrix element $u_{ij}^k \in \mc O(SU_q(2))$ of the $k$th irreducible representation of $SU_q(2)$, we have
\begin{align*}
\omega_z(u_{ij}^{k}) &= \varphi(\pi_z(u_{ij}^{k})1) = \varphi((u_{ij}^{k})_{(1)}(f_{1-z}\star S((u_{ij}^{k})_{(2)})) ) \\
&= \frac{\delta_{ij}\mu_k(|q|^{\bar z} +|q|^{-\bar z})}{\mu_{k}(|q| +|q|^{-1})}. 
\end{align*}  
In particular, when $z = t \in (-1,1)$, the functional $\omega_t$ has the desired form of \eqref{voigt-state} (just take $\psi_s = \omega_t$ with $s = |q|^t + |q|^{-t}$).  Moreover, $\omega_t$ turns out to be a state in this case:  In \cite[Section 4]{Vo11}, it is shown that the unital $\ast$-subalgebra 
 \[\mc O(SU_q(2)/\T) = \{x \in \mc O(SU_q(2)): (\iota \otimes \pi_{\T})\Delta (x) = x \otimes 1\}\]
of polynomial functions on the {\it standard Podles Sphere} $SU_q(2)/\T$ is invariant under each homomorphism $\pi_z$.  (Here, $\pi_\T:C(SU_q(2)) \to C(\T)$ is the canonical quotient map associated to the quantum subgroup $\T$).  If $z = t \in (-1,1)$,  \cite[Lemma 4.3]{Vo11} shows that we can moreover deform the $L^2(SU_q(2))$ inner product on $\mc O(SU_q(2)/ \T)$ in a $t$-dependent way so that $\pi_t$ extends to $\ast$-representation of $C(SU_q(2))$ on $H_t$, the Hilbert space completion of $\mc O(SU_q(2)/\T)$ with respect to this new inner product.  From the structure of this deformed inner product, it follows (see \cite[Proposition]{dCFY}) that the following formula holds:
\[\omega_t(x) = \varphi(\pi_t(x)1)  = \la \pi_t(x)1|1 \ra_{H_t}  \qquad (x \in \mc O(SU_q(2))). \]
In particular, $\omega_t$ (and thus also $\psi_s$) is always a state.  Conclusion: $\widehat{SU_q(2)}$ has the central Haagerup property. 

To prove $Z\Lambda_{cb}(\widehat{SU_q(2)}) = 1$, the idea is exactly the same as in the unimodular case: we need to truncate our net of normalized completely positive definite functions coming from the central Haagerup property in a cb-norm controlled way.  Now, if $q$ is such that $SU_q(2)$ happens to be monoidally equivalent to an $O^+_F$ for which Theorem \ref{freslon-cbnorm} can be applied, then our truncation argument in the unimodular case carries over verbatim.  For the remaining range of values of $q$ (i.e., $|q| $ near $1$), we need another approach.

The alternate approach is in the spirit of the classical paper of Pytlik and Szwarc \cite{PySz86}, where one wants to show that the family of normalized completely positive definite functions $(b_t)_{t \in (0,1)} \subset ZM_{cb}A(\widehat{SU_q(2)})$ associated to the above states $\omega_t$ can be analytically continued to an analytic family \[\mc S \owns z \mapsto b_z \in ZM_{cb}A(\widehat{SU_q(2)}),\]
where $\mc S \subset \C$ is a domain containing $(-1,1) \subset \R$.  In \cite[Theorem 11]{dCFY}, this is shown to be possible.  It is moreover shown there that for $|z|$ sufficiently small, $b_z^3 \in A(\widehat{SU_q(2)})$.  But since the family $\mc S \owns z \mapsto b_z^3$ is analytic, it follows that \[b_z^3 \in \overline{A(\widehat{SU_q(2)}) }^{\|\cdot\|_{M_{cb}A(\widehat{SU_q(2)} )}} \qquad (z \in \mc S).\]
Using this last fact, together with the fact that $b_t^3 \in ZM_{cb}A(\widehat{SU_q(2)}) $ is a normalized completely positive definite function with $\lim_{t \to 1}b_t^3 = 1$, we easily deduce that $Z\Lambda_{cb}(\widehat{SU_q(2)}) = 1$.

\bibliographystyle{plain}

\bibliography{brannan-biblio}

\end{document}